\renewcommand\subsection
\theoremstyle{plain}
\newtheorem{lemma}{Lemma}
\newtheorem{theorem}{Theorem}
\newtheorem{corollary}{Corollary}
\theoremstyle{remark}
\newtheorem{remark}{Remark}
\newtheorem*{case*}{Case}
\newtheorem*{claim*}{Claim}
\newtheorem{motivation}{Motivation}
\newtheorem{question}{Question}
\begin{document}
\allowdisplaybreaks[4]
\title[Existence of Higher Extremal K\"ahler Metrics on a Minimal Ruled Surface]{Existence of Higher Extremal K\"ahler Metrics on a Minimal Ruled Surface}
\author[Rajas Sandeep Sompurkar]{Rajas Sandeep Sompurkar}
\address{Department of Mathematics, Indian Institute of Science, Bangalore - 560012, Karnataka, India}
\email{rajass@iisc.ac.in}
\thanks{The author is supported by the DST-INSPIRE Doctoral Fellowship No. IF180232 and is also grateful for the support received from IISc Bangalore.}
\date{September 15, 2023}
\begin{abstract}
In this paper we prove that on a special type of minimal ruled surface, which is an example of a `pseudo-Hirzebruch surface', every K\"ahler class admits a certain kind of `higher extremal K\"ahler metric', which is a K\"ahler metric whose corresponding top Chern form and volume form satisfy a nice equation motivated by analogy with the equation characterizing an extremal K\"ahler metric. From an already proven result, it will follow that this specific higher extremal K\"ahler metric cannot be a `higher constant scalar curvature K\"ahler (hcscK) metric', which is defined, again by analogy with the definition of a constant scalar curvature K\"ahler (cscK) metric, to be a K\"ahler metric whose top Chern form is harmonic. By doing a certain set of computations involving the top Bando-Futaki invariant we will conclude that hcscK metrics do not exist in any K\"ahler class on this surface.
\end{abstract}
\subjclass[2020]{Primary 53C55; Secondary 53C25, 32Q15, 32J27, 32J15, 34B30}
\keywords{higher extremal K\"ahler metrics, hcscK metrics, minimal ruled surface, K\"ahler cone, momentum construction method, holomorphic vector fields, harmonic Chern forms, Bando-Futaki invariants}
\maketitle
\numberwithin{equation}{subsection}
\numberwithin{definition}{subsection}
\numberwithin{lemma}{subsection}
\numberwithin{theorem}{subsection}
\numberwithin{corollary}{subsection}
\numberwithin{remark}{subsection}
\numberwithin{case}{subsection}
\numberwithin{claim}{subsection}
\numberwithin{motivation}{subsection}
\numberwithin{question}{subsection}
\section{Introduction}\label{sec:Intro}
\subsection{Background of the Study}\label{subsec:Background}
Extremal K\"ahler metrics were introduced by Calabi \cite{Calabi:1982:eK,Calabi:1985:eK2} as candidates for `canonical metrics' (Tian \cite{Tian:2000:CanonMetr}) in a given K\"ahler class on a compact K\"ahler manifold. These are generalizations of constant scalar curvature K\"ahler (cscK) metrics which themselves include K\"ahler-Einstein metrics as a special case (see Aubin \cite{Aubin:1998:NonlinRiemGeom}, Barth, Hulek et al. \cite{Barth:2004:CmpctCmplxSurf}, Sz\'ekelyhidi \cite{Szekelyhidi:2014:eKintro} and Tian \cite{Tian:2000:CanonMetr} and the references therein). Extremal K\"ahler metrics can be equivalently defined as those K\"ahler metrics for which the gradient of the scalar curvature (which is expressed in terms of the first Chern form) is a holomorphic vector field (viz. $\nabla^{1,0} \mathrm{S} \left(\omega\right) = \nabla^{\left(1,0\right)} \left(\frac{2 n \pi c_1 \left(\omega\right) \wedge \omega^{n - 1}}{\omega^n}\right)$ is a holomorphic vector field) (see \cite{Calabi:1982:eK,Calabi:1985:eK2}, Pingali \cite{Pingali:2018:heK}, \cite{Szekelyhidi:2014:eKintro} and \cite{Tian:2000:CanonMetr}). Special cases of these are cscK metrics which can be characterized as those metrics for which the corresponding first Chern form is harmonic (viz. $\Delta c_1 \left(\omega\right) = - \bar{\partial}^* \bar{\partial} c_1 \left(\omega\right) = 0$) (see Bando \cite{Bando:2006:HarmonObstruct}, \cite{Calabi:1982:eK,Calabi:1985:eK2}, Futaki \cite{Futaki:1983:ObstructKE}, Pingali \cite{Pingali:2018:heK}, \cite{Szekelyhidi:2014:eKintro} and \cite{Tian:2000:CanonMetr}). These definitions (and even the definition of a K\"ahler-Einstein metric (\cite{Futaki:1983:ObstructKE} and \cite{Szekelyhidi:2014:eKintro})) involve the first Chern form which is the same as the Ricci form for K\"ahler manifolds \cite{Szekelyhidi:2014:eKintro}; this fact and the fact, that the first Chern class of a K\"ahler manifold does not depend on the choice of the K\"ahler metric on the manifold \cite{Szekelyhidi:2014:eKintro}, give some interesting results in the theory of extremal K\"ahler metrics (see \cite{Calabi:1982:eK,Calabi:1985:eK2}, \cite{Futaki:1983:ObstructKE}, \cite{Szekelyhidi:2014:eKintro} and Yau \cite{Yau:1977:CalabiConject}). \par
Taking the analogy of these definitions to the top Chern form, Pingali \cite{Pingali:2018:heK} defined \textit{higher extremal K\"ahler metrics} and \textit{higher constant scalar curvature K\"ahler (hcscK) metrics} by considering the following equation (where $c_n \left(\omega\right)$ is the top Chern form of a K\"ahler metric $\omega$ on a compact K\"ahler $n$-manifold $\mathrm{M}$ and $\lambda$ is a smooth real-valued function on $\mathrm{M}$):
\begin{equation}\label{eq:defheK}
c_n \left(\omega\right) = \lambda \omega^n
\end{equation}
where $\omega$ is said to be \textit{higher extremal K\"ahler} if $\nabla^{1,0} \lambda = \left(\bar{\partial} \lambda\right)^{\sharp}$ is a holomorphic vector field on $\mathrm{M}$ and $\omega$ is said to be \textit{hcscK} if $\lambda$ is a constant, which is equivalent to saying $c_n \left(\omega\right)$ is a harmonic top form on $\mathrm{M}$ (see Bando \cite{Bando:2006:HarmonObstruct}). The primary motivations for studying these types of K\"ahler metrics were that firstly the top Chern class of a K\"ahler manifold is the same as its Euler class (see Barth, Hulek et al. \cite{Barth:2004:CmpctCmplxSurf}) and secondly Yau \cite{Yau:2006:Perspectives} had stated that the behaviour of the higher Chern forms is mysterious in general and was as then unexplored, so some interesting results were expected from the study of these objects. \par
Earlier, hcscK metrics were studied by Bando \cite{Bando:2006:HarmonObstruct}, who defined obstructions for the existence of the same in a K\"ahler class on a compact K\"ahler manifold (which are now called as Bando-Futaki invariants), and also by Futaki \cite{Futaki:2006:HarmonicChern,Futaki:2008:HolomorphicVF}, whereas a version of higher extremal K\"ahler metrics called `perturbed extremal K\"ahler metrics' (and analogously a version of hcscK metrics called `perturbed constant scalar curvature K\"ahler (perturbed cscK) metrics') were studied by Futaki \cite{Futaki:2006:HarmonicChern,Futaki:2008:HolomorphicVF}. But Futaki's results \cite{Futaki:2006:HarmonicChern,Futaki:2008:HolomorphicVF} do not seem to apply in our case as mentioned by Pingali \cite{Pingali:2018:heK}. \par
This paper answers the problem of finding higher extremal K\"ahler metrics with some nice enough symmetries on a certain example of minimal ruled surface, which comes from a special family of minimal ruled surfaces called `pseudo-Hirzebruch surfaces', which like Hirzebruch surfaces are some of the most important examples of compact K\"ahler surfaces (see Barth, Hulek et al. \cite{Barth:2004:CmpctCmplxSurf} and T{\o}nnesen-Friedman \cite{Tonnesen:1998:eKminruledsurf}). The problem of constructing higher extremal K\"ahler metrics on this minimal ruled surface was started by Pingali \cite{Pingali:2018:heK} motivated by the analogy with the problem of constructing extremal K\"ahler metrics on this surface which was dealt with first by T{\o}nnesen-Friedman \cite{Tonnesen:1998:eKminruledsurf} (and later by Apostolov, Calderbank et al. \cite{Apostolov:2008:Hamiltonian}). Pingali \cite{Pingali:2018:heK} had proven the existence of a higher extremal K\"ahler metric with some required properties in one specific K\"ahler class on this K\"ahler surface, while we will prove in this paper that in every K\"ahler class on this K\"ahler surface there exists a higher extremal K\"ahler representative (which is constructed by a certain method which imposes those required nice symmetries on the metric), and it was already proven in \cite{Pingali:2018:heK} that this constructed metric, if it does exist, cannot be hcscK. Then after proving some facts about the top Bando-Futaki invariant on a general compact K\"ahler manifold we will be able to conclude that hcscK metrics do not exist in any K\"ahler class on our K\"ahler surface. We will then generalize our results for all pseudo-Hirzebruch surfaces. Finally we will do a brief comparison of our results in the higher extremal K\"ahler case with those gotten by T{\o}nnesen-Friedman \cite{Tonnesen:1998:eKminruledsurf} (and Apostolov, Calderbank et al. \cite{Apostolov:2008:Hamiltonian}) in the usual extremal K\"ahler analogue of this problem.
\subsection{Overview of the Paper}\label{subsec:Overview}
In this paper we will consider the minimal ruled surface $X := \mathbb{P} \left(\mathrm{L} \oplus \mathcal{O}\right)$ where $\mathrm{L}$ is a degree $-1$ holomorphic line bundle on a genus $2$ Riemann surface $\Sigma$, $\Sigma$ is equipped with a K\"ahler metric $\omega_\Sigma$ of constant scalar curvature $-2$ and $\mathrm{L}$ is equipped with a Hermitian metric $h$ whose curvature form is $-\omega_\Sigma$. The problem of finding extremal K\"ahler metrics on $X$ was studied by T{\o}nnesen-Friedman \cite{Tonnesen:1998:eKminruledsurf}, more general results regarding the same were already proven in \cite{Tonnesen:1998:eKminruledsurf}, then further results in more generalized settings were proven by Apostolov, Calderbank et al. \cite{Apostolov:2008:Hamiltonian} and a complete exposition about the specific case of $X$ is contained in Sz\'ekelyhidi \cite{Szekelyhidi:2014:eKintro} where the momentum construction method outlined in Hwang-Singer \cite{Hwang:2002:MomentConstruct} is used to obtain the required symmetries on the metric. In this paper we will deal with the higher extremal K\"ahler analogue of this problem and prove some important results on the same. \par
It has been proven using the Leray-Hirsch Theorem and the Nakai-Moishezon Criterion (proven by Fujiki \cite{Fujiki:1992:eKruledmani} and T{\o}nnesen-Friedman \cite{Tonnesen:1998:eKminruledsurf} and briefly explained in Subsection \ref{subsec:KCone}) that the K\"ahler cone (i.e. the set of all K\"ahler classes) of $X$ is precisely the following set:
\begin{equation}\label{eq:KConeX0}
H^{\left(1,1\right)} \left(X, \mathbb{R}\right)^+ = \left\lbrace a \mathsf{C} + b S_\infty \hspace{3pt} \vert \hspace{3pt} a, b > 0 \right\rbrace \subseteq H^{\left(1,1\right)} \left(X, \mathbb{R}\right) \subseteq H^2 \left(X, \mathbb{R}\right) = \mathbb{R} \mathsf{C} \oplus \mathbb{R} S_\infty
\end{equation}
where $\mathsf{C}$ is the Poincar\'e dual of a typical fibre of $X$ and $S_\infty$ is the image of $\Sigma$ in $X$ sitting as the `infinity divisor' of $X$. Following the usual extremal K\"ahler case as in Sz\'ekelyhidi \cite{Szekelyhidi:2014:eKintro}, Pingali \cite{Pingali:2018:heK} took $a = 2 \pi$, $b = 2 m \pi$ where $m > 0$ and posed the problem of finding a higher extremal K\"ahler metric $\omega$ on $X$, analogously using the momentum construction method of Hwang-Singer \cite{Hwang:2002:MomentConstruct} (described in Subsection \ref{subsec:MomentConstruct}), satisfying the following:
\begin{equation}\label{eq:problem}
\left[\omega\right] = 2 \pi \left(\mathsf{C} + m S_\infty\right) \hspace{1pt}, \hspace{5pt} c_2 \left(\omega\right) = \frac{\lambda}{2 \left(2 \pi\right)^2} \omega^2 \hspace{1pt}, \hspace{5pt} \nabla^{1,0} \lambda \in \mathfrak{h} \left(X\right)
\end{equation}
where $\left[\omega\right]$ denotes the K\"ahler class of $\omega$ and $\mathfrak{h} \left(X\right)$ denotes the set of all holomorphic vector fields on $X$. \par
Pingali \cite{Pingali:2018:heK} solved the problem (\ref{eq:problem}) for $m = 1$ (i.e. constructed the required higher extremal K\"ahler metric $\omega$ in the K\"ahler class $2 \pi \left(\mathsf{C} + S_\infty\right)$) and conjectured that there might exist a maximum value of $m$ beyond which there may not exist a solution to the problem (\ref{eq:problem}), meaning there perhaps may not exist higher extremal K\"ahler metrics in the K\"ahler classes $2 \pi \left(\mathsf{C} + m S_\infty\right)$ for values of $m$ larger than this maximum value. This was expected by analogy with the usual extremal K\"ahler analogue of the problem (\ref{eq:problem}) (refer to Apostolov, Calderbank et al. \cite{Apostolov:2008:Hamiltonian}, Sz\'ekelyhidi \cite{Szekelyhidi:2014:eKintro} and T{\o}nnesen-Friedman \cite{Tonnesen:1998:eKminruledsurf}). But on the contrary we will show (in Subsections \ref{subsec:AnalysisODEBVP}, \ref{subsec:Proof1} and \ref{subsec:Proof2}) that higher extremal K\"ahler metrics with the required properties exist in the K\"ahler classes $2 \pi \left(\mathsf{C} + m S_\infty\right)$ for all positive values of $m$. Since being a higher extremal K\"ahler metric is a scale-invariant property (as will be seen in Subsection \ref{subsec:AnalysisODEBVP}) an appropriate rescaling procedure applied on the K\"ahler metric and its top Chern form will enable us to construct the required higher extremal K\"ahler metrics in the K\"ahler classes $a \mathsf{C} + b S_\infty$ with $a, b > 0$, which exhaust the K\"ahler cone of $X$. \par
As will be seen in Subsections \ref{subsec:MomentConstruct} and \ref{subsec:AnalysisODEBVP}, solving the problem (\ref{eq:problem}) eventually boils down to solving an ODE BVP for the `momentum profile' $\phi$ of the K\"ahler metric $\omega$ (a smooth real-valued function of a real variable $\gamma$ which appears in the momentum construction method), depending on $m > 0$ and one more real parameter $C$, and the ODE in our case is not readily integrable and also not autonomous (it is a version of Chini's Equation) unlike the usual extremal K\"ahler case given in \cite{Szekelyhidi:2014:eKintro} and \cite{Tonnesen:1998:eKminruledsurf}, and hence requires a very delicate analysis for the existence of a solution satisfying all the boundary conditions (see Pingali \cite{Pingali:2018:heK}). Pingali \cite{Pingali:2018:heK} used some explicit numerical estimates specific to the case $m = 1$ and managed to solve it for $m = 1$, but our method of proving the existence of a solution to the required ODE BVP (explained in Subsections \ref{subsec:Proof1} and \ref{subsec:Proof2}) is completely different from that of Pingali \cite{Pingali:2018:heK} as we are doing it for an arbitrary positive value of $m$. Roughly speaking, we will fix $m > 0$ and drop the final boundary condition and look at the resultant ODE IVP, and then find the smooth solutions to the ODE IVP depending on the parameter $C$ and study the variation of the final boundary value of these solutions w.r.t. $C$, which will eventually lead us to a value of $C$ for which the required final boundary condition holds. \par
Then after constructing the higher extremal K\"ahler metric with the required nice symmetries in each K\"ahler class of $X$ we will tackle the problem of (non-)existence of hcscK metrics on $X$ in Section \ref{sec:Bando-Futaki}. We will first prove that the top Bando-Futaki invariant $\mathcal{F}_n : \mathfrak{h} \left(\mathrm{M}\right) \times H^{\left(1,1\right)} \left(\mathrm{M}, \mathbb{R}\right)^+ \to \mathbb{R}$ for a compact K\"ahler $n$-manifold $\mathrm{M}$ can be re-expressed in the following way, if the K\"ahler metric $\omega$ on $\mathrm{M}$ is higher extremal K\"ahler:
\begin{equation}\label{eq:BFexpr}
\mathcal{F}_n \left(Y, \left[\omega\right]\right) = - \int\limits_{\mathrm{M}} \left(\lambda - \lambda_0\right)^2 \omega^n =: - \lVert \lambda - \lambda_0 \rVert_{\mathcal{L}^2 \left(\mathrm{M}, \omega\right)}^2
\end{equation}
where $\lambda \in \mathcal{C}^\infty \left(\mathrm{M}, \mathbb{R}\right)$ satisfies equation (\ref{eq:defheK}) and $\lambda_0 \in \mathbb{R}$ is a constant. Then it will follow that given $\omega$ is higher extremal K\"ahler, $\mathcal{F}_n \left(\cdot, \left[\omega\right]\right) \equiv 0$ if and only if $\omega$ is hcscK, and further in the K\"ahler class of an hcscK metric on $\mathrm{M}$ every higher extremal K\"ahler representative is hcscK. This statement along with the fact, that our constructed higher extremal K\"ahler metrics on our surface $X$ are not hcscK (proven by Pingali \cite{Pingali:2018:heK} given their existence), will help us in concluding the complete non-existence of hcscK metrics (even without the nice symmetries imposed by the momentum construction method) on $X$. \par
In Section \ref{sec:Generalgd} we will take the genus of the Riemann surface $\Sigma$ to be $\mathtt{g} \geq 2$, the constant scalar curvature of the K\"ahler metric $\omega_\Sigma$ on $\Sigma$ to be $-2 \left(\mathtt{g} - 1\right)$, the degree of the holomorphic line bundle $\mathrm{L}$ to be $\mathtt{d} \neq 0$ and the curvature form of the Hermitian metric $h$ on $\mathrm{L}$ to be $\mathtt{d} \omega_\Sigma$. We will see that for the minimal ruled surface $X := \mathbb{P} \left(\mathrm{L} \oplus \mathcal{O}\right)$ in this general setting as well, the same analysis and all the arguments as in the special case of $\mathtt{g} = 2$ and $\mathtt{d} = -1$ (shown in Sections \ref{sec:ConstructheK}, \ref{sec:Proof} and \ref{sec:Bando-Futaki}) go through well and we can obtain all the results about the existence of higher extremal K\"ahler metrics and the non-existence of hcscK metrics on $X$ in the general case as well. \par
We will finally summarize our results on higher extremal K\"ahler (and hcscK) metrics and the top Bando-Futaki invariant and compare them with the analogous results of Apostolov, Calderbank et al. \cite{Apostolov:2008:Hamiltonian}, Calabi \cite{Calabi:1985:eK2}, LeBrun-Simanca \cite{LeBrun:1994:eKcmplxdeform} and T{\o}nnesen-Friedman \cite{Tonnesen:1998:eKminruledsurf} on extremal K\"ahler (and cscK) metrics and the Futaki invariant (exposition contained in Sz\'ekelyhidi \cite{Szekelyhidi:2014:eKintro}) in Section \ref{sec:Analogy}.
\numberwithin{equation}{subsection}
\numberwithin{definition}{subsection}
\numberwithin{lemma}{subsection}
\numberwithin{theorem}{subsection}
\numberwithin{corollary}{subsection}
\numberwithin{remark}{subsection}
\numberwithin{case}{subsection}
\numberwithin{claim}{subsection}
\numberwithin{motivation}{subsection}
\numberwithin{question}{subsection}
\section{Constructing Higher Extremal K\"ahler Metrics on a Minimal Ruled Surface}\label{sec:ConstructheK}
\subsection{The Minimal Ruled Surface and its K\"ahler Cone}\label{subsec:KCone}
Let $\left(\Sigma, \omega_\Sigma\right)$ be a genus $2$ (compact) Riemann surface equipped with a K\"ahler metric of constant scalar curvature $-2$ (and hence area $2 \pi$). Let $\mathrm{L}$ be a degree $-1$ holomorphic line bundle on $\Sigma$ equipped with a Hermitian metric $h$ whose curvature form is $-\omega_\Sigma$. Let $X$ be the minimal ruled (complex) surface $\mathbb{P} \left(\mathrm{L} \oplus \mathcal{O}\right)$ where $\mathcal{O}$ is the trivial line bundle on $\Sigma$ and $\mathbb{P}$ denotes vector bundle projectivization. This surface $X$ is an example of a `pseudo-Hirzebruch surface' (see definition in T{\o}nnesen-Friedman \cite{Tonnesen:1998:eKminruledsurf} and in Section \ref{sec:Generalgd}). \par
Let $\mathsf{C}$ be the Poincar\'e dual of a typical fibre of $X$ (i.e. $\mathsf{C}$ is a copy of the Riemann sphere $S^2$ sitting in $X$), $S_\infty$ (called the \textit{infinity divisor} of $X$) be the image of the subbundle $\mathrm{L} \oplus \left\lbrace 0 \right\rbrace \subseteq \mathrm{L} \oplus \mathcal{O}$ under the bundle projectivization to $X = \mathbb{P} \left(\mathrm{L} \oplus \mathcal{O}\right)$ and similarly $S_0$ (called the \textit{zero divisor} of $X$) be the image of the subbundle $\left\lbrace 0 \right\rbrace \oplus \mathcal{O} \subseteq \mathrm{L} \oplus \mathcal{O}$ under the bundle projectivization to $X = \mathbb{P} \left(\mathrm{L} \oplus \mathcal{O}\right)$, so that $S_\infty$ and $S_0$ are actually copies of $\Sigma$ sitting in $X$ as its infinity and zero divisors respectively (and $\Sigma$ is identified with $S_0$ as a (complex) curve in $X$). We have the following intersection formulae (refer to Barth, Hulek et al. \cite{Barth:2004:CmpctCmplxSurf}, Sz\'ekelyhidi \cite{Szekelyhidi:2014:eKintro} and T{\o}nnesen-Friedman \cite{Tonnesen:1998:eKminruledsurf}):
\begin{equation}\label{eq:IntersectForm}
\mathsf{C}^2 = 0 \hspace{1pt}, \hspace{5pt} S_\infty^2 = 1 \hspace{1pt}, \hspace{5pt} S_0^2 = -1 \hspace{1pt}, \hspace{5pt} \mathsf{C} \cdot S_\infty = 1 \hspace{1pt}, \hspace{5pt} \mathsf{C} \cdot S_0 = 1 \hspace{1pt}, \hspace{5pt} S_\infty \cdot S_0 = 0
\end{equation}
and also the following intersection formulae (again refer to \cite{Barth:2004:CmpctCmplxSurf}, \cite{Szekelyhidi:2014:eKintro} and \cite{Tonnesen:1998:eKminruledsurf}), where $c_1 \left(\mathrm{L}\right) \in H^{\left(1,1\right)} \left(\Sigma, \mathbb{R}\right) = H^2 \left(\Sigma, \mathbb{R}\right)$ is the first Chern class of $\mathrm{L}$, $\left[\omega_\Sigma\right] \in H^{\left(1,1\right)} \left(\Sigma, \mathbb{R}\right) = H^2 \left(\Sigma, \mathbb{R}\right)$ is the K\"ahler class of $\omega_\Sigma$ and $\left[\Sigma\right] \in H^2 \left(\Sigma, \mathbb{R}\right)$ is the fundamental class of $\Sigma$ (and $\left[\Sigma\right]$ is identified with $S_0$ in $H^2 \left(X, \mathbb{R}\right)$):
\begin{equation}\label{eq:Sigma}
c_1 \left(\mathrm{L}\right) \cdot \left[\Sigma\right] = -1 \hspace{1pt}, \hspace{5pt} \left[\omega_\Sigma\right] \cdot \left[\Sigma\right] = 2 \pi \hspace{1pt}, \hspace{5pt} \mathsf{C} \cdot \left[\Sigma\right] = 1 \hspace{1pt}, \hspace{5pt} S_\infty \cdot \left[\Sigma\right] = 0 \hspace{1pt}, \hspace{5pt} S_0 \cdot \left[\Sigma\right] = -1
\end{equation} \par
By the Leray-Hirsch Theorem we have $H^2 \left(X, \mathbb{R}\right) = \mathbb{R} \mathsf{C} \oplus \mathbb{R} S_\infty$. So starting with a general cohomology class $\alpha := a \mathsf{C} + b S_\infty \in H^{\left(1,1\right)} \left(X, \mathbb{R}\right) \subseteq H^2 \left(X, \mathbb{R}\right)$ we have the following criterion for determining when $\alpha$ will be a K\"ahler class on $X$ (integral cohomology version attributed to Nakai-Moishezon and explained in Barth, Hulek et al. \cite{Barth:2004:CmpctCmplxSurf} with all the relevant references, while real cohomology version proven in Fujiki \cite{Fujiki:1992:eKruledmani}, Lamari \cite{Lamari:1999:Kcone}, LeBrun-Singer \cite{LeBrun:1993:scalflatKcmpctcmplxsurf} and T{\o}nnesen-Friedman \cite{Tonnesen:1998:eKminruledsurf}):
\begin{theorem}[Nakai-Moishezon Criterion]\label{thm:Nakai-Moishezon}
$\alpha$ is a K\"ahler class on $X$ if and only if the following conditions are satisfied:
\begin{enumerate}
\item $\alpha^2 > 0$.
\item $\alpha \cdot \Gamma > 0$ for every irreducible (complex) curve $\Gamma \subseteq X$.
\end{enumerate}
\end{theorem}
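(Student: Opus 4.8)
The plan is to prove the two implications separately, with the forward direction being elementary and the converse carrying all the weight. For necessity, suppose $\alpha = \left[\omega\right]$ for some Kähler form $\omega$. Then $\alpha^2 = \int_X \omega \wedge \omega$ equals twice the Riemannian volume of $\left(X, \omega\right)$ and is therefore strictly positive, giving (1); and for any irreducible curve $\Gamma \subseteq X$ with normalization $\nu : \tilde{\Gamma} \to \Gamma$ we have $\alpha \cdot \Gamma = \int_{\tilde{\Gamma}} \nu^* \omega > 0$, since $\nu^* \omega$ is a smooth form that is strictly positive on the dense open locus where $\nu$ immerses $\tilde{\Gamma}$. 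This is just the statement that a Kähler form restricts positively to complex subvarieties, and it uses no special feature of $X$.

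The substantive implication is the converse, and here I would exploit the explicit structure of $X$ to reduce the infinitely many conditions in (2) to two test curves. Since $X$ is geometrically ruled over $\Sigma$, the negative section $S_0$ (with $S_0^2 = -1$) is the unique irreducible curve of negative self-intersection, and every other irreducible curve meets both the nef fibre class $\mathsf{C}$ and the section $S_0$ non-negatively. Writing an arbitrary irreducible curve as $\Gamma = p \mathsf{C} + q S_0$, one reads off $q = \Gamma \cdot \mathsf{C} \geq 0$ and $p - q = \Gamma \cdot S_0 \geq 0$ (for $\Gamma \neq S_0$), so for $\alpha = a \mathsf{C} + b S_\infty$ a direct computation using (\ref{eq:IntersectForm}) yields $\alpha \cdot \Gamma = a q + b p$. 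Since $\alpha \cdot \mathsf{C} = b$ and $\alpha \cdot S_0 = a$, condition (2) is thus equivalent to the single pair of inequalities $a > 0$ and $b > 0$, and these in turn force $\alpha^2 = b \left(2a + b\right) > 0$; so on this particular surface (1) is in fact a consequence of (2).

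It remains to promote the numerical positivity $a, b > 0$ to the existence of an honest Kähler representative, and this is the one place where genuine work (or a cited theorem) is needed. I would appeal to the real-cohomology version of the Nakai--Moishezon criterion for compact complex surfaces, proven in the references quoted just before the statement, which says precisely that a real $\left(1,1\right)$-class lying in the correct component of the positive cone $\left\lbrace \alpha^2 > 0 \right\rbrace$ and pairing positively with every irreducible curve is Kähler, the relevant component being singled out here by $\alpha \cdot \mathsf{C} = b > 0$. Alternatively, and more in the spirit of the rest of the paper, I would produce explicit Kähler forms: both $\mathsf{C}$ and $S_\infty$ are nef (the former is the base-point-free fibre class, while $S_\infty$ satisfies $S_\infty \cdot \mathsf{C} = 1 > 0$, $S_\infty \cdot S_0 = 0$ and $S_\infty^2 = 1 > 0$, hence $S_\infty \cdot \Gamma = p \geq 0$ for every irreducible $\Gamma$), so the open cone they span, which is exactly $\left\lbrace a, b > 0 \right\rbrace$, is the ample cone, and one then exhibits a Kähler metric in each such class by the momentum/Calabi ansatz of Subsection~\ref{subsec:MomentConstruct}. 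The main obstacle is precisely this final step: checking numerical positivity on curves is routine once the intersection data are in hand, but converting it into a closed positive $\left(1,1\right)$-form is the deep content of the criterion, supplied either by the cited surface result or by the explicit construction the paper carries out in any case.
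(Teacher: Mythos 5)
The paper does not actually prove Theorem~\ref{thm:Nakai-Moishezon}: it quotes it as a known result, deferring the integral version to Barth--Hulek et al.\ and the real-cohomology version to Fujiki, Lamari, LeBrun--Singer and T{\o}nnesen-Friedman. So there is no internal proof to match against, and your proposal should be judged as a reconstruction. Your necessity direction is standard and correct. For sufficiency you offer two routes, and they have different statuses. The first route --- ``appeal to the real-cohomology version of the Nakai--Moishezon criterion for compact complex surfaces'' --- is circular as a proof, since that \emph{is} the statement being proved; it amounts to the same deferral to the literature that the paper itself makes (though you deserve credit for flagging the component issue, i.e.\ that the bare conditions $\alpha^2>0$ and positivity on curves must be supplemented by something like $\alpha\cdot\omega>0$ to select the right component of the positive cone, which here is forced by $\alpha\cdot\mathsf{C}=b>0$). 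The second route is a genuine, self-contained proof for this particular $X$: your reduction of condition (2) to the two test classes $\mathsf{C}$ and $S_0$ (using $\Gamma\cdot\mathsf{C}\geq 0$ for every irreducible $\Gamma$ and $\Gamma\cdot S_0\geq 0$ for $\Gamma\neq S_0$, and the computations $q=\Gamma\cdot\mathsf{C}$, $p-q=\Gamma\cdot S_0$, $\alpha\cdot\Gamma=aq+bp$) is correct and is essentially the content of Corollary~\ref{cor:TFKCone} and of the inequalities (\ref{eq:KConeXCond}); combining it with an explicit K\"ahler representative in each class $2\pi\left(\mathsf{C}+mS_\infty\right)$ from the ansatz (\ref{eq:ansatz}) (which requires only a suitable strictly convex $f$ with $0\leq f'\leq m$, not the solution of any ODE) and with scaling does close the argument. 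Two small quibbles: your assertion that $S_0$ is the \emph{unique} irreducible curve of negative self-intersection is neither proved nor needed (non-negativity of $\Gamma\cdot S_0$ for $\Gamma\neq S_0$ is just the fact that distinct irreducible curves on a smooth surface meet non-negatively), and your observation that (1) follows from (2) on this surface, while true, means your argument proves the specialized equivalence ``$\alpha$ K\"ahler iff $a,b>0$'' rather than illuminating the general criterion the theorem is named after.
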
 \par
It was further shown by Fujiki \cite{Fujiki:1992:eKruledmani} that it suffices to check the intersection number of $\alpha$ against $\Gamma = \mathsf{C} \hspace{1pt}, \hspace{2.5pt} S_\infty \hspace{1pt}, \hspace{2.5pt} S_0 \hspace{3.5pt} \text{and} \hspace{1pt} \left[\Sigma\right]$ only in Theorem \ref{thm:Nakai-Moishezon} (see the explanation given in T{\o}nnesen-Friedman \cite{Tonnesen:1998:eKminruledsurf}):
\begin{corollary}[Fujiki, T{\o}nnesen-Friedman]\label{cor:TFKCone}
$\alpha$ is a K\"ahler class on $X$ if and only if the following conditions are satisfied:
\begin{enumerate}
\item $\alpha^2 > 0$.
\item $\alpha \cdot \mathsf{C} > 0$, $\alpha \cdot S_\infty > 0$, $\alpha \cdot S_0 > 0$ and $\alpha \cdot \left[\Sigma\right] > 0$.
\end{enumerate}
\end{corollary}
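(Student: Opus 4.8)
The plan is to deduce this Corollary from the Nakai-Moishezon Criterion (Theorem \ref{thm:Nakai-Moishezon}) by showing that for the specific surface $X$, the positivity of $\alpha \cdot \Gamma$ over all irreducible curves $\Gamma$ reduces to checking the four distinguished classes $\mathsf{C}$, $S_\infty$, $S_0$ and $[\Sigma]$. The forward direction is immediate: if $\alpha$ is K\"ahler then $\alpha^2 > 0$ and $\alpha \cdot \Gamma > 0$ for \emph{every} irreducible curve, so in particular the four listed intersection numbers are positive. Thus the real content is the converse, and the main obstacle is controlling the positivity of $\alpha \cdot \Gamma$ for an \emph{arbitrary} irreducible curve $\Gamma$ using only the finitely many inequalities in condition (2).

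The key structural input I would use is that $X = \mathbb{P}(\mathrm{L} \oplus \mathcal{O})$ is a geometrically ruled surface over $\Sigma$, so every irreducible curve is either a fibre (homologous to $\mathsf{C}$) or else a multisection that projects surjectively onto $\Sigma$. First I would write $\alpha = a\mathsf{C} + b S_\infty$ and note, using the intersection formulae (\ref{eq:IntersectForm}) and (\ref{eq:Sigma}), that $\alpha^2 = b(2a+b)$, $\alpha \cdot \mathsf{C} = b$, $\alpha \cdot S_\infty = a+b$, $\alpha \cdot S_0 = a$, and $\alpha \cdot [\Sigma] = b$; so the inequalities in condition (2) together translate into $a > 0$ and $b > 0$, which in turn force $\alpha^2 = b(2a+b) > 0$ automatically. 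I would record this as the key simplification: conditions (1) and (2) of the Corollary are together equivalent to $a, b > 0$, i.e. to the description (\ref{eq:KConeX0}) of the cone.

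The heart of the matter is then to verify that $a, b > 0$ already guarantees $\alpha \cdot \Gamma > 0$ for every irreducible curve $\Gamma$, so that Theorem \ref{thm:Nakai-Moishezon} applies. For a fibre, $\Gamma \equiv \mathsf{C}$ and $\alpha \cdot \Gamma = b > 0$. For a horizontal irreducible curve $\Gamma$ (a multisection of degree $p \geq 1$ over $\Sigma$), I would expand its class as $\Gamma \equiv p\, S_0 + q\, \mathsf{C}$ in the basis $\{S_0, \mathsf{C}\}$ and argue that the effectivity and irreducibility of $\Gamma$ force the coefficient $q$ into a range making $\alpha \cdot \Gamma$ a positive combination of $a$ and $b$; concretely $\alpha \cdot \Gamma = a p + b(p + q)$ after rewriting via (\ref{eq:IntersectForm}), and one needs $p + q \geq 0$ (equivalently $\Gamma \cdot S_\infty \geq 0$, which holds because $S_\infty$ and $S_0$ are the two sections of minimal and an irreducible horizontal curve cannot have strictly negative intersection with the section $S_\infty$ unless it equals $S_\infty$, in which case the value is again positive). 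This last positivity of the intersection of an irreducible horizontal curve with the sections is exactly the delicate point, and I expect it to be the main obstacle; I would handle it by the standard dichotomy for ruled surfaces — either $\Gamma = S_\infty$ (handled directly) or $\Gamma \neq S_\infty$ is irreducible and distinct from $S_\infty$, whence $\Gamma \cdot S_\infty \geq 0$ by effectivity of the intersection of distinct irreducible curves. Once this is established, $\alpha \cdot \Gamma > 0$ for all irreducible $\Gamma$, and invoking Theorem \ref{thm:Nakai-Moishezon} completes the converse and hence the Corollary.
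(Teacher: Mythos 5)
The paper itself offers no proof of this Corollary: it simply records the statement with a citation to Fujiki and T{\o}nnesen-Friedman (``it was further shown by Fujiki that it suffices to check\dots''), and then only uses it to derive the inequalities characterizing the K\"ahler cone. Your proposal therefore supplies an argument where the paper supplies none, and the argument you give is the standard one from the cited references: the forward direction is immediate from Theorem \ref{thm:Nakai-Moishezon}, and the converse follows from the dichotomy that an irreducible curve on the ruled surface $X$ is either a fibre or a horizontal multisection, together with the fact that two distinct irreducible curves on a surface have non-negative intersection number. That skeleton is correct and does establish the Corollary.

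Two slips should be fixed, though neither damages the logic. First, with $\left[\Sigma\right]$ identified with $S_0$ one has $\alpha \cdot \left[\Sigma\right] = a$, not $b$ (this is exactly why the paper's list (\ref{eq:KConeXCond}) ends with ``$a > 0$, $a > 0$'' twice); your conclusion that condition (2) amounts to $a, b > 0$ is still right. Second, your formulae for a horizontal curve are internally inconsistent with the stated decomposition: if $\Gamma \equiv p\, S_0 + q\, \mathsf{C}$ then the intersection formulae (\ref{eq:IntersectForm}) give $\alpha \cdot \Gamma = a p + b q$ and $\Gamma \cdot S_\infty = q$, whereas the expressions you wrote, $\alpha \cdot \Gamma = a p + b \left(p + q\right)$ and $\Gamma \cdot S_\infty = p + q$, are the ones belonging to the decomposition $\Gamma \equiv p\, S_\infty + q\, \mathsf{C}$. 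Either basis works --- in both cases $\Gamma \cdot \mathsf{C} = p \geq 1$, $\Gamma \cdot S_\infty \geq 0$ for irreducible $\Gamma \neq S_\infty$, and hence $\alpha \cdot \Gamma \geq a p > 0$ --- but you should commit to one and make the coefficients match. With that cleaned up, the proof is complete and is, as far as one can tell, the same reduction carried out in Fujiki and T{\o}nnesen-Friedman.
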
 \par
From Corollary \ref{cor:TFKCone} and the intersection formulae (\ref{eq:IntersectForm}) and (\ref{eq:Sigma}) we derive the following inequalities characterizing all K\"ahler classes $\alpha = a \mathsf{C} + b S_\infty$ on $X$:
\begin{equation}\label{eq:KConeXCond}
2 a b + b^2 > 0 \hspace{1pt}, \hspace{5pt} b > 0 \hspace{1pt}, \hspace{5pt} a + b > 0 \hspace{1pt}, \hspace{5pt} a > 0 \hspace{1pt}, \hspace{5pt} a > 0
\end{equation}
which simply boil down to $a > 0$, $b > 0$. Therefore the K\"ahler cone of $X$ is precisely (\cite{Fujiki:1992:eKruledmani} and \cite{Tonnesen:1998:eKminruledsurf}):
\begin{equation}\label{eq:KConeX}
H^{\left(1,1\right)} \left(X, \mathbb{R}\right)^+ = \left\lbrace a \mathsf{C} + b S_\infty \hspace{3pt} \vert \hspace{3pt} a, b > 0 \right\rbrace
\end{equation} \par
Following Sz\'ekelyhidi \cite{Szekelyhidi:2014:eKintro} we will first consider K\"ahler classes only of the form $\beta := 2 \pi \left(\mathsf{C} + m S_\infty\right)$ with $m > 0$, and after obtaining results about the existence of higher extremal K\"ahler metrics and the non-existence of hcscK metrics in these K\"ahler classes, we will generalize these results to the general K\"ahler classes which are of the form $\alpha = a \mathsf{C} + b S_\infty$ with $a, b > 0$ by using a simple rescaling argument.
\subsection{Description of the Momentum Construction Method}\label{subsec:MomentConstruct}
We give here a brief description of the momentum construction method attributed to Hwang-Singer \cite{Hwang:2002:MomentConstruct} which was applied by Pingali \cite{Pingali:2018:heK} to define the problem of construction of higher extremal K\"ahler metrics possessing some nice symmetries along the fibres and the zero and infinity divisors of the minimal ruled surface $X$ described in Subsection \ref{subsec:KCone}. In this description we are following \cite{Pingali:2018:heK}. \par
We write an ansatz for a K\"ahler metric on $X$, which is supposed to lie in a required K\"ahler class and be higher extremal K\"ahler, in a way similar to Hwang-Singer \cite{Hwang:2002:MomentConstruct} and Sz\'ekelyhidi \cite{Szekelyhidi:2014:eKintro}. The idea is to first consider an ansatz for a metric on the total space of $\mathrm{L}$ minus its zero section and then extend this metric across the zero and infinity divisors of $X = \mathbb{P} \left(\mathrm{L} \oplus \mathcal{O}\right)$, and this is done by taking the pullback of $\mathrm{L}$ to its total space minus the zero section and then adding the curvature of the resultant bundle to the pullback of $\omega_\Sigma$ to $X$. \par
Let $\mathtt{p} : X \to \Sigma$ be the fibre bundle projection, $z$ be a coordinate on $\Sigma$, $w$ be a coordinate on the fibres of $\mathrm{L}$, $s := \ln \left| \left(z,w\right) \right|_{h}^2 = \ln \left| w \right|^2 + \ln h \left(z\right)$ be the coordinate on the total space of $\mathrm{L}$ minus the zero section, $f$ be a strictly convex smooth function of $s$ such that $s + f \left(s\right)$ is strictly increasing, and $\omega$ be a K\"ahler metric on $X$ given by the following ansatz (as in \cite{Pingali:2018:heK} and \cite{Szekelyhidi:2014:eKintro}):
\begin{equation}\label{eq:ansatz}
\omega = \mathtt{p}^* \omega_\Sigma + \sqrt{-1} \partial \bar{\partial} f \left(s\right)
\end{equation}
From the computations done in \cite{Pingali:2018:heK} and \cite{Szekelyhidi:2014:eKintro} we get the following expression for $\omega$:
\begin{equation}\label{eq:omega}
\omega = \left(1 + f' \left(s\right)\right) \mathtt{p}^* \omega_\Sigma + f'' \left(s\right) \sqrt{-1} \frac{d w \wedge d \bar{w}}{\left| w \right|^2}
\end{equation}
We want $\omega$ to be in the K\"ahler class $2 \pi \left(\mathsf{C} + m S_\infty\right)$ where $m \in \mathbb{R}_{> 0}$, and for that to happen we must have $0 \leq f' \left(s\right) \leq m$ (which we get by integrating equation (\ref{eq:omega}) over $X$ and computing the areas of $\mathsf{C}$ and $S_\infty$ using the intersection formulae (\ref{eq:IntersectForm}) and (\ref{eq:Sigma}), as done in \cite{Pingali:2018:heK} and \cite{Szekelyhidi:2014:eKintro}). \par
We compute the curvature form matrix of $\omega$ given by $\Theta \left(\omega\right) := \bar{\partial} \left( \mathrm{H}^{-1} \partial \mathrm{H} \right) \left(\omega\right)$, where $\mathrm{H} \left(\omega\right)$ is the Hermitian matrix of $\omega$, as follows:
\begin{equation}\label{eq:omega2}
\omega^2 = 2 \left(1 + f' \left(s\right)\right) f'' \left(s\right) \mathtt{p}^* \omega_\Sigma \sqrt{-1} \frac{d w \wedge d \bar{w}}{\left| w \right|^2}
\end{equation}
\begin{equation}\label{eq:Curv1}
\Theta \left(\omega\right) =
\begin{bmatrix}
- \partial \bar{\partial} \ln \left(1 + f' \left(s\right)\right) + 2 \sqrt{-1} \mathtt{p}^* \omega_\Sigma & 0 \\
0 & - \partial \bar{\partial} \ln \left(f'' \left(s\right)\right)
\end{bmatrix}
\end{equation}
This is where the conditions $1 + f' \left(s\right) > 0$ and $f'' \left(s\right) > 0$ are needed. We then use the Legendre Transform $F \left(\tau\right)$ in the variable $\tau := f' \left(s\right)$ as follows:
\begin{equation}\label{eq:LegTrans}
f \left(s\right) + F \left(\tau\right) = s \tau
\end{equation}
We then define the \textit{momentum profile} of $\omega$ as $\phi \left(\tau\right) := \frac{1}{F'' \left(\tau\right)} = f'' \left(s\right)$ (again as in \cite{Pingali:2018:heK} and \cite{Szekelyhidi:2014:eKintro}). We then write down the curvature form matrix $\sqrt{-1} \Theta \left(\omega\right)$ in terms of $\phi \left(\gamma\right)$, where $\gamma := \tau + 1 \in \left[1, m + 1\right]$, as follows:
\begin{equation}\label{eq:Curv2}
\sqrt{-1} \Theta \left(\omega\right) =
\begin{bmatrix}
\frac{\phi}{\gamma} \left( \frac{\phi}{\gamma} - \phi' \right) \sqrt{-1} \frac{d w \wedge d \bar{w}}{\left| w \right|^2} - \left(\frac{\phi}{\gamma} + 2\right) \mathtt{p}^* \omega_\Sigma & 0 \\
0 & - \phi'' \phi \sqrt{-1} \frac{d w \wedge d \bar{w}}{\left| w \right|^2} - \phi' \mathtt{p}^* \omega_\Sigma
\end{bmatrix}
\end{equation}
\par
The top Chern form of $\omega$ is given by $c_2 \left(\omega\right) := \frac{1}{\left(2 \pi\right)^2} \det \left( \sqrt{-1} \Theta \left(\omega\right) \right)$ and in terms of $\phi \left(\gamma\right)$ is given by:
\begin{equation}\label{eq:Chern}
c_2 \left(\omega\right) = \frac{1}{\left(2 \pi\right)^2} \mathtt{p}^* \omega_\Sigma \sqrt{-1} \frac{d w \wedge d \bar{w}}{\left| w \right|^2} \frac{\phi}{\gamma^2} \left( \gamma \left(\phi + 2 \gamma\right) \phi'' + \phi' \left(\phi' \gamma - \phi\right) \right)
\end{equation}
In order for $\omega$ to be a higher extremal K\"ahler metric we need to have:
\begin{equation}\label{eq:ChernheK}
c_2 \left(\omega\right) = \frac{\lambda}{2 \left(2 \pi\right)^2} \omega^2
\end{equation}
where $\nabla^{1,0} \lambda$ is a holomorphic vector field. Comparing equations (\ref{eq:omega2}), (\ref{eq:Chern}) and (\ref{eq:ChernheK}) we do the following calculation:
\begin{equation}\label{eq:gradlambda}
\nabla^{1,0} \lambda = \lambda' \nabla^{1,0} \gamma = \lambda' \nabla^{1,0} \tau = \lambda' \nabla^{\left(1,0\right)} \left(f' \left(s\right)\right) = \lambda' w \frac{\partial}{\partial w}
\end{equation}
So $\nabla^{1,0} \lambda$ is a holomorphic vector field if and only if $\lambda'$ is a constant i.e. $\lambda = A \gamma + B$ for some $A, B \in \mathbb{R}$. \par
So finally $\omega$ is a higher extremal K\"ahler metric on $X$ if and only if its momentum profile $\phi \left(\gamma\right)$, $\gamma \in \left[1, m + 1\right]$ satisfies the following ODE, for some $C \in \mathbb{R}$ (which is obtained by substituting the value of $\lambda$ in equations (\ref{eq:Chern}) and (\ref{eq:ChernheK}) and integrating w.r.t. $\gamma$, as done in \cite{Pingali:2018:heK}):
\begin{equation}\label{eq:ODE}
\left(2 \gamma + \phi\right) \phi' = A \frac{\gamma^4}{3} + B \frac{\gamma^3}{2} + C \gamma
\end{equation}
with the following boundary conditions (which are required for $\omega$ to extend smoothly to the zero and infinity divisors of $X$ as shown in \cite{Pingali:2018:heK} and \cite{Szekelyhidi:2014:eKintro}):
\begin{equation}\label{eq:BVP}
\begin{gathered}
\phi \left(1\right) = \phi \left(m + 1\right) = 0 \\
\phi' \left(1\right) = - \phi' \left(m + 1\right) = 1
\end{gathered}
\end{equation}
and with $\phi > 0$ on $\left(1, m + 1\right)$, as $\phi = f'' > 0$. \par
Now finally the problem boils down to solving the ODE (\ref{eq:ODE}) for $\phi \left(\gamma\right)$ on $\left[1, m + 1\right]$ with the boundary conditions (\ref{eq:BVP}) and with $\phi \left(\gamma\right) > 0$ on $\left(1, m + 1\right)$, for some constants $A, B, C \in \mathbb{R}$.
\subsection{Analysis of the ODE BVP for the Momentum Profile}\label{subsec:AnalysisODEBVP}
Following Pingali \cite{Pingali:2018:heK} we define the polynomial $p \left(\gamma\right) := A \frac{\gamma^3}{3} + B \frac{\gamma^2}{2} + C$ and the transformation $v := \frac{\left(2 \gamma + \phi\right)^2}{2}$, $\gamma \in \left[1, m + 1\right]$ after which the ODE BVP (\ref{eq:ODE}) and (\ref{eq:BVP}) with $\phi > 0$ on $\left(1, m + 1\right)$, obtained in Subsection \ref{subsec:MomentConstruct}, reduces to the following:
\begin{equation}\label{eq:ODEBVP1}
\begin{gathered}
v' = 2 \sqrt{2} \sqrt{v} + p \left(\gamma\right) \gamma \hspace{5pt} \text{on} \hspace{5pt} \left[1, m + 1\right] \\
v \left(1\right) = 2 \hspace{1pt}, \hspace{5pt} v \left(m + 1\right) = 2 \left(m + 1\right)^2 \\
v' \left(1\right) = 6 \hspace{1pt}, \hspace{5pt} v' \left(m + 1\right) = 2 \left(m + 1\right) \\
v \left(\gamma\right) > 2 \gamma^2 \hspace{5pt} \text{on} \hspace{5pt} \left(1, m + 1\right)
\end{gathered}
\end{equation} \par
As shown in \cite{Pingali:2018:heK}, imposing the boundary conditions on the ODE in (\ref{eq:ODEBVP1}) gives us $A$, $B$ as linear functions of $C$ as follows:
\begin{equation}\label{eq:ABC}
\begin{gathered}
A\left(C\right) = \frac{3 C}{m}\left[1 - \frac{1}{\left(m + 1\right)^2}\right] - \frac{6}{m}\left[1 + \frac{1}{\left(m + 1\right)^2}\right] \\
B\left(C\right) = - 2 C\left[1 + \frac{1}{m} - \frac{1}{m\left(m + 1\right)^2}\right] + 4\left[1 + \frac{1}{m} + \frac{1}{m\left(m + 1\right)^2}\right]
\end{gathered}
\end{equation} \par
Now we observe that if we solve the ODE in (\ref{eq:ODEBVP1}) with the boundary condition $v \left(1\right) = 2$ then equations (\ref{eq:ABC}) will imply that $v' \left(1\right) = 6$, and similarly solving the ODE in (\ref{eq:ODEBVP1}) with the boundary condition $v \left(m + 1\right) = 2 \left(m + 1\right)^2$ will give us $v' \left(m + 1\right) = 2 \left(m + 1\right)$ after applying equations (\ref{eq:ABC}). So after this the ODE BVP (\ref{eq:ODEBVP1}) reduces to the following:
\begin{equation}\label{eq:ODEBVP2}
\begin{gathered}
v' = 2 \sqrt{2} \sqrt{v} + p \left(\gamma\right) \gamma \hspace{5pt} \text{on} \hspace{5pt} \left[1, m + 1\right] \\
v \left(1\right) = 2 \hspace{1pt}, \hspace{5pt} v \left(m + 1\right) = 2 \left(m + 1\right)^2 \\
v \left(\gamma\right) > 2 \gamma^2 \hspace{5pt} \text{on} \hspace{5pt} \left(1, m + 1\right)
\end{gathered}
\end{equation}
where $p \left(\gamma\right) = A \left(C\right) \frac{\gamma^3}{3} + B \left(C\right) \frac{\gamma^2}{2} + C$ after considering the equations (\ref{eq:ABC}). \par
We then analyze the polynomials $p \left(\gamma\right)$ and $p \left(\gamma\right) \gamma$ on $\left[1, m + 1\right]$ and get the following result (proven by Pingali \cite{Pingali:2018:heK}):
\begin{lemma}[Pingali]\label{lem:polynom}
The polynomial $p \left(\gamma\right)$ satisfying $p \left(1\right) = 2$ and $p \left(m + 1\right) = -2$ has exactly one root $\gamma_0$ in $\left[1, m + 1\right]$ and has at most one critical point $\gamma' := - \frac{B}{A}$ in $\left[1, m + 1\right]$. Further the polynomial $p \left(\gamma\right) \gamma$ also has the same $\gamma_0$ as its unique root in $\left[1, m + 1\right]$ but has at most $3$ critical points in $\left[1, m + 1\right]$. In particular both $p \left(\gamma\right)$ and $p \left(\gamma\right) \gamma$ are strictly positive on $\left[1, \gamma_0\right)$ and strictly negative on $\left(\gamma_0, m + 1\right]$.
\end{lemma}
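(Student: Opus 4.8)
The plan is to read off the global shape of the cubic $p$ from the factorization of its derivative, and then transfer every conclusion to $p(\gamma)\gamma$ using positivity of $\gamma$ on the interval. First I would record the two sign data $p(1) = 2 > 0$ and $p(m+1) = -2 < 0$, which one verifies directly by substituting $\gamma = 1$ and $\gamma = m+1$ into $p(\gamma) = A(C)\frac{\gamma^3}{3} + B(C)\frac{\gamma^2}{2} + C$ using the expressions (\ref{eq:ABC}) (or simply takes as the defining normalization). The intermediate value theorem then already furnishes at least one root $\gamma_0 \in (1, m+1)$.

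The crucial structural observation is that $p$ carries no linear term, so that
\[ p'(\gamma) = A\gamma^2 + B\gamma = \gamma(A\gamma + B). \]
On $[1, m+1] \subseteq (0, \infty)$ the factor $\gamma$ never vanishes, so the only possible zero of $p'$ in the interval is $\gamma' = -\frac{B}{A}$ (and none at all if $A = 0$ or $-B/A \notin [1, m+1]$); this is exactly the assertion that $p$ has at most one critical point $\gamma' = -B/A$ there. Moreover $A\gamma + B$ is affine in $\gamma$, hence changes sign at most once, so $p'$ changes sign at most once on the interval and $p$ is therefore either strictly monotone or unimodal (decreasing-then-increasing, or increasing-then-decreasing).

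Next I would run the short sign analysis. If $p$ is monotone, then $p(1) > p(m+1)$ forces it to be strictly decreasing, giving a unique root with $p > 0$ before it and $p < 0$ after. If $p$ has a single interior extremum at $\gamma'$, that extremum is automatically the \emph{global} extremum on $[1, m+1]$: in the local-maximum case $p$ is increasing on $[1, \gamma']$, so $p \geq p(1) = 2 > 0$ there, and strictly decreasing on $[\gamma', m+1]$ down to $-2$, contributing exactly one sign change; in the local-minimum case the global minimum value satisfies $p(\gamma') \leq p(m+1) = -2 < 0$, so $p$ stays strictly negative on all of $[\gamma', m+1]$ and crosses zero exactly once on the decreasing branch $[1, \gamma']$. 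In every case $\gamma_0$ is the unique root, with $p > 0$ on $[1, \gamma_0)$ and $p < 0$ on $(\gamma_0, m+1]$.

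Finally, the statements about $p(\gamma)\gamma$ follow with almost no extra work. Since $\gamma > 0$ throughout $[1, m+1]$, multiplication by $\gamma$ preserves both the zero set and the sign, so $p(\gamma)\gamma$ has $\gamma_0$ as its unique root in the interval and inherits the same positivity and negativity on $[1, \gamma_0)$ and $(\gamma_0, m+1]$. For the critical points I would simply note that $p(\gamma)\gamma = \frac{A}{3}\gamma^4 + \frac{B}{2}\gamma^3 + C\gamma$ is a quartic, so its derivative $\frac{4A}{3}\gamma^3 + \frac{3B}{2}\gamma^2 + C$ is a cubic with at most three real roots; hence $p(\gamma)\gamma$ has at most three critical points in $[1, m+1]$. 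The only place demanding any care is the unimodal sign analysis — specifically the observation that a lone interior extremum must be the global one, which is precisely what rules out a second root on the returning branch; everything else is bookkeeping with the boundary values $\pm 2$.
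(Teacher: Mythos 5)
Your proof is correct. Note that the paper itself gives no argument for this lemma --- it is simply quoted from Pingali \cite{Pingali:2018:heK} --- so there is nothing internal to compare against; your derivation (verifying $p(1)=2$, $p(m+1)=-2$ from (\ref{eq:ABC}), exploiting the absence of a linear term so that $p'(\gamma)=\gamma(A\gamma+B)$ has at most one zero on $[1,m+1]$, running the monotone/unimodal case analysis, and transferring everything to $p(\gamma)\gamma$ via positivity of $\gamma$) is the natural self-contained argument and fills in exactly what the citation leaves implicit.
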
 \par
By using Lemma \ref{lem:polynom} we can observe that if $v$ is a smooth solution of the ODE in (\ref{eq:ODEBVP2}) on $\left[1, m + 1\right]$ satisfying both the boundary conditions viz. $v \left(1\right) = 2$ and $v \left(m + 1\right) = 2 \left(m + 1\right)^2$, then integrating the expression for $v'$ in (\ref{eq:ODEBVP2}) on $\left[1, \gamma_0\right]$ and $\left[\gamma_0, m + 1\right]$ separately and noting the sign of $p \left(\gamma\right) \gamma$ on both the subintervals will help us conclude $v \left(\gamma\right) \geq 2 \gamma^2$ on $\left[1, m + 1\right]$, and then rewriting the equation of $v'$ in (\ref{eq:ODEBVP2}) as $2 \sqrt{v} \left(\sqrt{v} - \sqrt{2} \gamma\right)' = p \left(\gamma\right) \gamma$ and using the uniqueness of $\gamma_0$ will help us conclude $v \left(\gamma\right) > 2 \gamma^2$ on $\left(1, m + 1\right)$ (see \cite{Pingali:2018:heK}). Thus the ODE BVP (\ref{eq:ODEBVP2}) further reduces to the following:
\begin{equation}\label{eq:ODEBVP3}
\begin{gathered}
v' = 2 \sqrt{2} \sqrt{v} + p \left(\gamma\right) \gamma \hspace{5pt} \text{on} \hspace{5pt} \left[1, m + 1\right] \\
v \left(1\right) = 2 \hspace{1pt}, \hspace{5pt} v \left(m + 1\right) = 2 \left(m + 1\right)^2
\end{gathered}
\end{equation} \par
Now finally our problem has been reduced to solving the ODE BVP (\ref{eq:ODEBVP3}), depending on the parameter $C \in \mathbb{R}$, for a fixed $m \in \mathbb{R}_{> 0}$ (indicating the K\"ahler class under consideration). By following the strategy of Pingali \cite{Pingali:2018:heK} we drop the final boundary condition $v \left(m + 1\right) = 2 \left(m + 1\right)^2$ in the ODE BVP (\ref{eq:ODEBVP3}) and consider the following ODE IVP:
\begin{equation}\label{eq:ODEIVP}
\begin{gathered}
v' = 2 \sqrt{2} \sqrt{v} + p \left(\gamma\right) \gamma \hspace{5pt} \text{on} \hspace{5pt} \left[1, m + 1\right] \\
v \left(1\right) = 2
\end{gathered}
\end{equation} \par
We first get a smooth solution $v \left(\cdot; C\right)$ on $\left[1, m + 1\right]$ for the ODE IVP (\ref{eq:ODEIVP}) for each $C$ in an interval of the real line. Then we find a $C_1$ in that interval such that $v \left(m + 1; C_1\right) > 2 \left(m + 1\right)^2$ and another $C_2$ in the same interval such that $v \left(m + 1; C_2\right) < 2 \left(m + 1\right)^2$, thereby proving that there exists a $C$ in the interval such that $v \left(m + 1; C\right) = 2 \left(m + 1\right)^2$. The first part was done by Pingali \cite{Pingali:2018:heK} (see the following result where $m$ is fixed) and this paper's main goal is to do the second part which requires some deeper analysis of the variation of the ODE IVP (\ref{eq:ODEIVP}) w.r.t. the parameter $C$. Note that this will be done for an arbitrary $m > 0$.
\begin{theorem}[Pingali]\label{thm:VPP}
There exists an $M > 2$ (depending only on $m$) such that for any $C < M$ there exists a unique smooth solution $v \left(\cdot; C\right)$ on $\left[1, m + 1\right]$ for the ODE IVP (\ref{eq:ODEIVP}). Further there exists a $C_1 < M$ such that $v \left(m + 1; C_1\right) > 2 \left(m + 1\right)^2$.
\end{theorem}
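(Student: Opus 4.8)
The plan is to read (\ref{eq:ODEIVP}) as a scalar first-order ODE whose right-hand side $F(\gamma,v) := 2\sqrt{2}\sqrt{v} + p(\gamma)\gamma$ is smooth and locally Lipschitz in $v$ exactly on the open region $\{v>0\}$, and then to produce a priori bounds confining the solution to this region throughout $[1,m+1]$ for every $C$ below a suitable threshold $M$. Since $v(1)=2>0$, the Picard--Lindel\"of theorem immediately furnishes a unique maximal smooth solution $v(\cdot;C)$ on some $[1,\gamma_\ast)$ with $v>0$; the whole task is therefore to show $\gamma_\ast>m+1$, with $v$ staying positive, for $C<M$.

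No blow-up can occur on the compact interval because $\sqrt{v}$ grows sublinearly: with $K(C):=\max_{[1,m+1]}|p(\gamma)\gamma|$, comparison with the global (quadratic) solution of $u'=2\sqrt{2}\sqrt{u}+K(C)$, $u(1)=2$, bounds $v$ from above on $[1,m+1]\cap[1,\gamma_\ast)$. Hence the only possible obstruction is $v\downarrow 0$, and controlling this is the heart of the argument. I would pass to $q:=\sqrt{2v}=2\gamma+\phi$, which satisfies $q'=2+p(\gamma)\gamma/q$ and is positive precisely when $v>0$. By Lemma \ref{lem:polynom}, $p(\gamma)\gamma>0$ on $[1,\gamma_0)$, so there $q'>2=(2\gamma)'$ while $q(1)=2$; thus $q>2\gamma$ on $(1,\gamma_0]$ and in particular $q(\gamma_0)>2\gamma_0>2$. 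The only interval on which $q$ can decrease is $(\gamma_0,m+1]$, where $p(\gamma)\gamma<0$.

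On $(\gamma_0,m+1]$ set $K'(C):=\max_{[\gamma_0,m+1]}\left(-p(\gamma)\gamma\right)\ge 0$, so that $q'\ge 2-K'(C)/q$. The autonomous field $2-K'/q$ has a lower fence at $q=K'(C)/2$, whence a standard comparison shows that as long as $q(\gamma_0)\ge K'(C)/2$ the solution obeys $q\ge K'(C)/2>0$ on all of $(\gamma_0,m+1]$. It remains to bound $K'(C)$: writing $p(\gamma)=p_0(\gamma)+C\,g(\gamma)$ with coefficients read off from (\ref{eq:ABC}), one finds $g(1)=g(m+1)=0$ and, since its third root is negative, $g<0$ on $(1,m+1)$. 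Consequently $p(\gamma)\gamma$ is forced strongly negative in the interior only as $C\to+\infty$, so $K'(C)$ stays bounded as $C$ decreases while $q(\gamma_0)>2$; the positivity condition $K'(C)/2\le q(\gamma_0;C)$ therefore holds on an interval of the form $(-\infty,M)$ with $M>2$. For such $C$ the solution extends smoothly and uniquely to $[1,m+1]$, which is the first assertion.

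For the second assertion I would run the same decomposition in the opposite regime. As $C\to-\infty$ the interior forcing $p(\gamma)\gamma\approx C\,g(\gamma)\gamma$ becomes large and positive on any fixed subinterval $[1,m+1-\varepsilon]$ (because $g<0$ there), while the zone where $p(\gamma)\gamma<0$ shrinks toward $m+1$ and the forcing there stays bounded below by its pinned endpoint value $-2(m+1)$. Integrating $v'=2\sqrt{2}\sqrt{v}+p(\gamma)\gamma$ then drives $v(m+1;C)$ above $2(m+1)^2$ once $-C$ is large enough, positivity being automatic since $v$ increases on the bulk; any such sufficiently negative $C_1$ is trivially $<M$. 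The genuine difficulty is the positivity estimate of the third paragraph: making the fence comparison for $q$ rigorous across the sign change of $p(\gamma)\gamma$ at $\gamma_0$, and extracting from it the quantitative threshold $M>2$.
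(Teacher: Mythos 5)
Your overall scaffolding (Picard--Lindel\"of off the initial point since $v(1)=2>0$, the sublinear-growth bound ruling out blow-up, and the reduction of the continuation problem to keeping $v$ away from $0$) matches the paper's Lemmas \ref{lem:continue}, \ref{lem:firstexist} and \ref{lem:positivity}, and your observation that $q:=\sqrt{2v}$ satisfies $q'=2+p(\gamma)\gamma/q$, hence $q>2\gamma$ on $(1,\gamma_0]$, is correct. The gap is in your third paragraph, at exactly the point you flag as ``the genuine difficulty''. Your fence yields positivity on $(\gamma_0,m+1]$ only under the hypothesis $q(\gamma_0;C)\ge K'(C)/2$, and the justification you offer --- that $K'(C)$ stays bounded as $C$ decreases while $q(\gamma_0)>2$ --- does not produce that hypothesis. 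Since $p_C(m+1)\cdot(m+1)=-2(m+1)$ for every $C$ (Lemma \ref{lem:polynom}), one always has $K'(C)\ge 2(m+1)$, so the fence needs at least $q(\gamma_0;C)\ge m+1$; the bound $q(\gamma_0)>2\gamma_0$ delivers this only if $\gamma_0(C)\ge (m+1)/2$, and one must additionally rule out an interior maximum of $-p_C(\gamma)\gamma$ on $[\gamma_0,m+1]$ exceeding the endpoint value $2(m+1)$. Both are genuine quantitative claims about the two-parameter family $p_C$, uniform in $m$, which you have not established; without them you obtain neither existence for all $C\le 2$ nor a threshold $M>2$.

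The proof the paper relies on (Pingali's, summarized in Lemmas \ref{lem:polynom1} and \ref{lem:VPP}) avoids the pointwise comparison entirely by an integral estimate: integrating the ODE and discarding the nonnegative term $2\sqrt{2}\sqrt{v}$ gives $v(\gamma)\ge 2+P_C(\gamma)$ with $P_C(\gamma)=\int_1^\gamma p_C(y)\,y\,dy$; since $P_C'=p_C(\gamma)\gamma$ changes sign exactly once, from positive to negative, one gets $P_C(\gamma)\ge\min\left\lbrace 0,P_C(m+1)\right\rbrace$, and $P_C(m+1)=LC+N$ is computed in closed form with $L<0$, $N>0$ and $-N/L>2$. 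This gives $v\ge 2$ on $\left[1,m+1\right]$ for every $C<-N/L=:M$, and simultaneously $v(m+1;C)\ge 2+LC+N\to\infty$ as $C\to-\infty$, which is the second assertion. So either prove the polynomial inequalities your fence requires, or replace your third paragraph by this integral estimate --- which also makes your fourth paragraph quantitative for free.
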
 \par
We will prove the following main result in this paper (where $m$ is fixed):
\begin{theorem}\label{thm:main}
There exists a unique $M = M \left(m\right) > 2$ such that for every $C \in \left(-\infty, M\right)$ there exists a unique smooth solution $v\left(\cdot; C\right) : \left[1, m + 1\right] \to \mathbb{R}$ for the ODE IVP (\ref{eq:ODEIVP}) such that $\lim\limits_{C \to -\infty} v\left(m + 1; C\right) = \infty$ and $\lim\limits_{C \to M^-} v\left(m + 1; C\right) = 0$, and for every $C \geq M$ there exists a unique smooth solution $v\left(\cdot; C\right) : \left[1, \gamma_{\star, C}\right) \to \mathbb{R}$ for (\ref{eq:ODEIVP}), for a unique $\gamma_{\star, C} \in \left(1, m + 1\right]$, such that $v\left(\cdot; C\right)$ cannot be continued smoothly beyond $\gamma_{\star, C}$. Further there exists a unique $C = C \left(m\right) \in \left(-\infty, M\right)$ such that $v \left(m + 1; C\right) = 2 \left(m + 1\right)^2$, and further the $C$ with this property has to be strictly greater than $2$.
\end{theorem}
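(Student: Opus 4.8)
The plan is to regard the endpoint value $v(m+1;C)$ as a function of the parameter $C$ and to show that, on the range of $C$ for which the solution survives on all of $[1,m+1]$, this function is continuous and \emph{strictly decreasing}, sweeping out the interval $(0,\infty)$; the desired $C$ is then the unique preimage of $2(m+1)^2$. The engine behind every step is a single structural fact. Writing the forcing term as $q(\gamma;C) := p(\gamma)\gamma$, equations (\ref{eq:ABC}) show that $q$ is affine in $C$, say $q(\gamma;C) = C\,g(\gamma) + h(\gamma)$ with $g(\gamma) = \partial_C\big(p(\gamma)\gamma\big) = \gamma\,\partial_C p(\gamma)$. Since $p(1)=2$ and $p(m+1)=-2$ hold for \emph{every} $C$, the cubic $\partial_C p$ vanishes at $\gamma=1$ and $\gamma=m+1$; its three roots multiply to $-3/a_1 < 0$ (where $a_1>0$ is the $C$-coefficient of $A$), forcing the third root to be $-\tfrac{m+1}{m+2}<0$. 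Hence $\partial_C p$, and therefore $g$, is strictly negative on the whole open interval $(1,m+1)$ and vanishes at both endpoints. This one sign computation is what I would isolate first.

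Next I would differentiate the IVP (\ref{eq:ODEIVP}) in $C$. Wherever $v>0$ the right-hand side is smooth in $(v,C)$, so $w := \partial_C v$ exists and solves the linear variational equation $w' = \tfrac{\sqrt 2}{\sqrt v}\,w + g(\gamma)$ with $w(1)=0$. Its integrating-factor solution $w(\gamma) = \int_1^\gamma \exp\!\big(\int_t^\gamma \tfrac{\sqrt2}{\sqrt{v}}\,ds\big)\,g(t)\,dt$ has a strictly positive kernel, so the sign of $w$ is that of $g$: thus $w(\gamma)<0$ for every $\gamma\in(1,m+1]$, and $C\mapsto v(\gamma;C)$ is strictly decreasing for each fixed $\gamma$. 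This monotonicity lets me define the sharp threshold $M := \sup\{\,C : v(\cdot;C)\text{ exists on }[1,m+1]\text{ with }v>0\,\}$ and show the good set is exactly an interval $(-\infty,M)$: if the solution stays positive up to $m+1$ for some $C_0$, then for all $C<C_0$ we have $v(\cdot;C)\ge v(\cdot;C_0)>0$ by monotonicity, so it survives too (uniqueness of each such solution is Picard–Lindel\"of since the field is locally Lipschitz in $v$ where $v>0$). Theorem \ref{thm:VPP} gives $M>2$ and nonemptiness, while the crash of the forcing $Cg\to-\infty$ on the interior as $C\to+\infty$ gives $M<\infty$. As $v'\le 2\sqrt2\sqrt v + \max_{[1,m+1]}|q|$ precludes blow-up to $+\infty$ on a bounded interval, the only possible failure is $v$ reaching $0$; a continuous-dependence/exit-time argument then yields, for $C\ge M$, a unique $\gamma_{\star,C}\in(1,m+1]$ at which $v\to0$ and beyond which no smooth continuation exists.

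For the two limits I would argue by comparison. As $C\to-\infty$, on any compact subinterval of $(1,m+1)$ where $g\le-\delta<0$ the forcing $Cg\ge-C\delta\to+\infty$, so $v$ is driven arbitrarily high there, and since $Cg\ge0$ on the remaining portion (as $g\le0$ and $C<0$) the value cannot drop back before $m+1$; hence $v(m+1;C)\to\infty$. For $C\to M^-$, monotonicity gives a decreasing limit $\ell\ge0$; were $\ell>0$, continuous dependence would let the solution at $C=M$ (and slightly beyond) still reach $m+1$ positively, contradicting the definition of $M$, so $\ell=0$. Continuity and strict monotonicity of $v(m+1;\cdot):(-\infty,M)\to(0,\infty)$ then make it a bijection, and $2(m+1)^2$ has a unique preimage $C=C(m)$; Theorem \ref{thm:VPP}'s $C_1$ with $v(m+1;C_1)>2(m+1)^2$ confirms the target lies in the range.

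Finally, to see $C(m)>2$ I would avoid delicate pointwise estimates and use an averaged identity. Integrating the ODE over $[1,m+1]$ and subtracting the same computation for the explicit comparison solution $2\gamma^2$ (which satisfies $v'=2\sqrt2\sqrt v$ with $v(1)=2$) gives $\int_1^{m+1} q(\gamma;C(m))\,d\gamma = -2\sqrt2\int_1^{m+1}\big(\sqrt{v}-\sqrt2\,\gamma\big)\,d\gamma$, valid because the BVP solution meets both endpoint conditions. Since that solution satisfies $v>2\gamma^2$ on $(1,m+1)$ (established, following Pingali, from the two boundary conditions and Lemma \ref{lem:polynom}), the right-hand side is strictly negative, so $\int_1^{m+1} q(\gamma;C(m))\,d\gamma<0$. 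On the other hand $\int_1^{m+1} q(\gamma;C)\,d\gamma$ is affine in $C$ with slope $\int_1^{m+1} g<0$, hence strictly decreasing, and a direct evaluation at $C=2$ factors as $\tfrac{m^2(m^2+5m+5)}{5(m+1)^2}>0$. A decreasing affine function positive at $2$ but negative at $C(m)$ forces $C(m)>2$. I expect the genuine difficulty to lie not in these algebraic steps but in the threshold analysis at $M$ — making the dichotomy and the limit $v(m+1;C)\to0$ as $C\to M^-$ rigorous through maximal-existence and continuous-dependence arguments — whereas the sign $g<0$ and the averaged identity are the two clean levers that drive everything else.
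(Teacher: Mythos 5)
Your overall architecture coincides with the paper's: the sign computation for $g=\partial_C\bigl(p(\gamma)\gamma\bigr)$ (including the third root $-\tfrac{m+1}{m+2}$) is exactly Lemma \ref{lem:derpolynom}, the integrating-factor bound $\partial_C v\le Q(\gamma)<0$ is Theorem \ref{thm:StrctDecr}, the identification $\mathscr{C}=(-\infty,M)$, the two limits, the intermediate-value step, and the averaged identity giving $C(m)>2$ (your evaluation $2L+N=\tfrac{m^2(m^2+5m+5)}{5(m+1)^2}>0$ is correct and equivalent to the paper's observation $-N/L>2$) all appear in Subsections \ref{subsec:Proof1} and \ref{subsec:Proof2}. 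But there is a genuine gap at exactly the point you flag as the difficulty, and your sketch does not close it. To get $\lim_{C\to M^-}v(m+1;C)=0$ you argue that if the monotone limit $\ell$ of the endpoint values were positive, continuous dependence would carry the solution at $C=M$ across $[1,m+1]$. The missing step is that $\ell>0$ does not by itself prevent the decreasing limit function $w=\lim_{C\uparrow M}v(\cdot;C)$ from touching zero at an \emph{interior} point and returning to $w(m+1)=\ell>0$: the field is only $\tfrac{1}{2}$-H\"older at $v=0$, so such a $w$ would still satisfy the integral equation while the genuine solution $v_M$ breaks down earlier, and no contradiction with $M\notin\mathscr{C}$ would follow. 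One must rule out interior touchdown: a nonnegative $\mathcal{C}^1$ limit with $w(t_0)=0$ at an interior $t_0$ has $w'(t_0)=0$, forcing $p_M(t_0)t_0=0$, hence $t_0=\gamma_0$; but $p_M(\gamma)\gamma\ge0$ on $[1,\gamma_0]$ gives $w\ge w(1)=2$ there, a contradiction. This uses the sign structure of $p(\gamma)\gamma$ itself (Lemma \ref{lem:polynom}), not of $g$, and it is precisely the content of Lemma \ref{lem:positivity} and of the Claim inside Case ($2$) of Theorem \ref{thm:UnifConv}; the same point is needed to show that for $C\ge M$ the solution fails only by reaching zero at a unique $\gamma_{\star,C}$ with strictly negative limiting slope.

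A second, smaller circularity: your deduction that survival for $C_0$ implies survival for all $C<C_0$ invokes $\partial_C v\le 0$, but integrating that in $C$ at fixed $\gamma$ presupposes that $v(\gamma;E)$ exists for every intermediate $E\in[C,C_0]$ --- which is part of what is being proved. The paper breaks the circle with a bootstrap (the Claim in Theorem \ref{thm:scrC} (\ref{itm:scrCinterval}): a common $\tilde{\gamma}>1$ on which all intermediate solutions exist, followed by propagation of a uniform lower bound); alternatively one can compare the two solutions directly, since $u:=v_C-v_{C_0}$ satisfies $u'=2\sqrt{2}\left(\sqrt{v_C}-\sqrt{v_{C_0}}\right)+\left(C-C_0\right)g$ with $\left(C-C_0\right)g\ge 0$, so $u\ge 0$ persists by a standard comparison argument. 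Either repair should be made explicit. The remaining steps --- the limit at $-\infty$ via $\int_1^{m+1}p_C(\gamma)\gamma\, d\gamma=LC+N\to\infty$, properness of $\mathscr{C}$ via the quantitative bound $\partial_C v(m+1;\cdot)\le Q(m+1)<0$, and uniqueness of $C(m)$ from strict monotonicity --- are sound and match the paper's Lemma \ref{lem:VPP}, Theorem \ref{thm:scrC} and Corollary \ref{cor:final}.
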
 \par
We will then have answered the question of the existence of higher extremal K\"ahler metrics in the K\"ahler classes of the form $2 \pi \left(\mathsf{C} + m S_\infty\right)$ on $X$ affirmatively:
\begin{corollary}\label{cor:main}
For each $m > 0$ there exists a higher extremal K\"ahler metric $\omega$ on $X$ satisfying the following:
\begin{equation}\label{eq:problem1}
\left[\omega\right] = 2 \pi \left(\mathsf{C} + m S_\infty\right) \hspace{1pt}, \hspace{5pt} c_2 \left(\omega\right) = \frac{\lambda}{2 \left(2 \pi\right)^2} \omega^2 \hspace{1pt}, \hspace{5pt} \nabla^{1,0} \lambda \in \mathfrak{h} \left(X\right)
\end{equation}
\end{corollary}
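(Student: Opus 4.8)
The plan is to treat $C$ as a shooting parameter and study the endpoint map $C \mapsto v(m+1;C)$. Theorem \ref{thm:VPP} already gives, for every $C < M$, a unique smooth solution $v(\cdot;C)$ on $[1,m+1]$ together with one value $C_1$ satisfying $v(m+1;C_1) > 2(m+1)^2$; the remaining task is to describe how $v(m+1;C)$ depends on $C$, to identify the exact threshold $M$, and to compute the two limiting values. Once $C \mapsto v(m+1;C)$ is shown to be continuous and strictly decreasing on $(-\infty,M)$ with $\lim_{C\to-\infty} v(m+1;C) = +\infty$ and $\lim_{C\to M^-} v(m+1;C) = 0$, the intermediate value theorem will yield a unique $C$ with $v(m+1;C) = 2(m+1)^2$, which is exactly what Corollary \ref{cor:main} needs.

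The engine is a monotonicity statement coming from the variational equation. Differentiating the ODE in (\ref{eq:ODEIVP}) with respect to $C$ and setting $w := \partial v/\partial C$ produces the linear IVP $w' = \frac{\sqrt 2}{\sqrt v} w + \frac{\partial p}{\partial C}\gamma$ with $w(1) = 0$, whose solution $w(\gamma) = \frac{1}{\mu(\gamma)}\int_1^\gamma \mu(s)\frac{\partial p}{\partial C}(s)\,s\,ds$ has a strictly positive integrating factor $\mu$; hence the sign of $w$ is dictated by the sign of $\partial p/\partial C$. From (\ref{eq:ABC}) one checks that $p(1) = 2$ and $p(m+1) = -2$ for every $C$, so $\partial p/\partial C$ is a cubic in $\gamma$ vanishing at $\gamma = 1$ and $\gamma = m+1$; its leading coefficient $\frac13 A'(C) = \frac{m+2}{(m+1)^2}$ is positive and its remaining root equals $-\frac{m+1}{m+2} < 0$, whence $\partial p/\partial C < 0$ on $(1,m+1)$. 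It follows that $w(\gamma) < 0$ for $\gamma \in (1,m+1]$, i.e. $v(m+1;C)$ is strictly decreasing in $C$, which also gives the uniqueness asserted in the theorem.

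First I would pin down the threshold by setting $M := \sup\{C : v(\cdot;C) \text{ exists on all of } [1,m+1]\}$; by Theorem \ref{thm:VPP} this set contains every $C$ below a value larger than $2$, so $M > 2$. Because $v' \le 2\sqrt 2\sqrt v + (\text{bounded in }\gamma)$, the solution cannot run off to $+\infty$ in finite $\gamma$, so the sole obstruction to prolongation is $v \downarrow 0$; and since $v$ increases while $\gamma \le \gamma_0$ (both terms of $v'$ being then nonnegative), any zero of $v$ must occur at some $\gamma_{\star,C} > \gamma_0$, where $p\gamma < 0$ forces $v' < 0$ and blocks smooth continuation --- this is precisely the $C \ge M$ alternative, with $\gamma_{\star,M} = m+1$. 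For the limit at $-\infty$, the inequality $\partial p/\partial C < 0$ makes $p \to +\infty$ pointwise on $(1,m+1)$ as $C \to -\infty$, so $\int_1^{m+1} p\gamma\,d\gamma \to +\infty$ and $v(m+1;C) \ge 2 + \int_1^{m+1} p\gamma\,d\gamma \to +\infty$. For the limit at $M^-$, monotonicity guarantees that the limit exists; if it were a positive number, continuous dependence on $C$ would keep $v$ bounded away from $0$ up to and slightly beyond $C = M$, extending a solution across $m+1$ for some $C > M$ and contradicting the definition of $M$, so the limit is $0$. Strict monotonicity together with the intermediate value theorem then produces the unique interior $C$ with $v(m+1;C) = 2(m+1)^2$.

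Finally, for $C > 2$, strict monotonicity of $C \mapsto v(m+1;C)$ reduces the claim to the single strict inequality $v(m+1;2) > 2(m+1)^2$. At $C = 2$ the coefficients (\ref{eq:ABC}) simplify to the explicit polynomial $p(\gamma) = 2 + \frac{4\gamma^2(1-\gamma)}{m(m+1)^2}$, and I would pass to $g := \sqrt v - \sqrt 2\,\gamma$, which obeys $g' = \frac{p\gamma}{2\sqrt v}$ and $g(1) = 0$, so that $v(m+1) > 2(m+1)^2$ is equivalent to $g(m+1) = \int_1^{m+1} \frac{p(\gamma)\gamma}{2\sqrt v}\,d\gamma > 0$. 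Since $p\gamma$ changes sign exactly once, from positive to negative, at $\gamma_0$ (Lemma \ref{lem:polynom}), while the weight $\frac{1}{2\sqrt v}$ is decreasing, a Chebyshev-type rearrangement gives $g(m+1) \ge \frac{1}{2\sqrt{v(\gamma_0)}}\int_1^{m+1} p(\gamma)\gamma\,d\gamma$, and an elementary computation evaluates $\int_1^{m+1} p(\gamma)\gamma\,d\gamma = \frac{m^2(m^2+5m+5)}{5(m+1)^2} > 0$ at $C = 2$, so $g(m+1) > 0$. I expect the two genuinely delicate points to be (i) making the limit $\lim_{C\to M^-} v(m+1;C) = 0$ rigorous, which needs continuous dependence of the solution right up to the degeneration $v \downarrow 0$, and (ii) verifying that $v > 2\gamma^2$ (equivalently that $v$ is increasing, so the weight $\frac{1}{2\sqrt v}$ is genuinely monotone) throughout $[1,m+1]$ at $C = 2$, which is what legitimizes the rearrangement estimate in the final step.
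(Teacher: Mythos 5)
Your overall strategy is the same as the paper's: treat $C$ as a shooting parameter, prove $C \mapsto v(m+1;C)$ is strictly decreasing by differentiating the ODE in $C$ and exploiting the sign of $q(\gamma) = \frac{d}{dC}\left(p_C(\gamma)\gamma\right)$ (your factorization of that polynomial and its negativity on $(1,m+1)$ are exactly Lemma \ref{lem:derpolynom} and Theorem \ref{thm:StrctDecr}), identify $M$ as the supremum of the set of $C$ for which the solution survives to $\gamma=m+1$, compute the two endpoint limits, and invoke the intermediate value theorem; the limit at $-\infty$ via $v(m+1;C)\ge 2+P_C(m+1)$ is Lemma \ref{lem:VPP}. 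So for the existence assertion of Corollary \ref{cor:main} your route and the paper's coincide.

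Two caveats. First, the step you yourself flag --- $\lim_{C\to M^-}v(m+1;C)=0$ --- is the crux, and ``continuous dependence on $C$'' cannot be invoked off the shelf: the right-hand side $2\sqrt2\sqrt v+p\gamma$ fails to be Lipschitz precisely at the degeneration $v=0$, and monotonicity of the endpoint value alone does not exclude the pointwise limit function dipping to zero at an interior $\gamma$ while staying positive at $m+1$. The paper spends Theorem \ref{thm:UnifConv} on exactly this: an Arzel\`a--Ascoli compactness argument together with a uniform positive lower bound on interior local minima of the $v_{\mathrm{n}}$, obtained by showing that a vanishing sequence of local minima would force the limit to have a common zero of $w$ and $w'$ at $\gamma_{0,0}$, contradicting Lemmas \ref{lem:firstexist} and \ref{lem:positivity}. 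Something equivalent must actually be built for your argument to close. Second, your proof that the critical $C$ exceeds $2$ has a gap: the rearrangement $\int_1^{m+1}\frac{p\gamma}{2\sqrt v}\,d\gamma\ \ge\ \frac{1}{2\sqrt{v(\gamma_0)}}\int_1^{m+1}p\gamma\,d\gamma$ requires $v(\gamma)\ge v(\gamma_0)$ on $[\gamma_0,m+1]$, i.e.\ genuine monotonicity of the weight across $\gamma_0$; this is not the condition $v>2\gamma^2$ that you propose to verify, and since $p\gamma<0$ beyond $\gamma_0$ the solution at $C=2$ may well decrease there, so the needed monotonicity is not established. The paper sidesteps this entirely by deriving $C>2$ a posteriori: integrating the ODE for the solution satisfying both boundary conditions and using $v>2\gamma^2$ gives $2(m+1)^2-2>2\left((m+1)^2-1\right)+LC+N$, hence $LC+N<0$ and $C>-N/L>2$. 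Neither caveat touches the bare existence claim of Corollary \ref{cor:main} directly --- the value $C_1$ with $v(m+1;C_1)>2(m+1)^2$ is already supplied by Theorem \ref{thm:VPP} --- but the first one must be repaired before your proof is complete.
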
 \par
\begin{remark}
Note that Pingali \cite{Pingali:2018:heK} had proven for $m = 1$ that there exists a $C_2 < M$ such that $v \left(m + 1; C_2\right) < 2 \left(m + 1\right)^2$, thus proving the existence of a $C < M$ satisfying $v \left(m + 1; C\right) = 2 \left(m + 1\right)^2$, and hence proving the existence of the required higher extremal K\"ahler metric in the K\"ahler class $2 \pi \left(\mathsf{C} + S_\infty\right)$. But this was done by using some explicit numerical bounds on $p \left(\gamma\right) \gamma$ and $v'$ specifically applicable for $m = 1$, and the author was not able to generalize those arguments and estimates for a general $m > 0$. So we will adopt a completely different strategy of proof for Theorem \ref{thm:main}.
\end{remark} \par
Now we observe that on a compact K\"ahler $n$-manifold $\mathrm{M}$ if we rescale a K\"ahler metric $\omega$ by a factor of $k > 0$ then the K\"ahler class and the Hermitian matrix get multiplied by $k$ whereas the connection and hence the curvature remain unchanged. So the curvature form matrix and hence the top Chern form (as defined in Subsection \ref{subsec:MomentConstruct}) of $k \omega$ are exactly the same as that of $\omega$. Thus if $\omega$ satisfying equation (\ref{eq:defheK}) is a higher extremal K\"ahler metric on $\mathrm{M}$ then so is $k \omega$ as seen by the following:
\begin{equation}\label{eq:rescalheK}
c_n \left(k \omega\right) = c_n \left(\omega\right) = \lambda \omega^n = \frac{\lambda}{k^n} \left(k \omega\right)^n
\end{equation} \par
So on our surface $X$ if $\omega \in 2 \pi \left(\mathsf{C} + m S_\infty\right)$ is a higher extremal K\"ahler metric satisfying the equations (\ref{eq:problem1}) then for any $k > 0$ the rescaled K\"ahler metric $\eta := \frac{k}{2 \pi} \omega \in k \left(\mathsf{C} + m S_\infty\right)$ is also higher extremal K\"ahler and satisfies the equation $c_2 \left(\eta\right) = \frac{\lambda}{2 k^2} \eta^2$. \par
This allows us to generalize Corollary \ref{cor:main} to the general K\"ahler classes on $X$ which are of the form $a \mathsf{C} + b S_\infty$ where $a, b > 0$:
\begin{corollary}\label{cor:rescalheK}
For all $a, b > 0$ there exists a higher extremal K\"ahler metric $\eta$ on $X$ satisfying the following:
\begin{equation}\label{eq:rescalprob}
\left[\eta\right] = a \mathsf{C} + b S_\infty \hspace{1pt}, \hspace{5pt} c_2 \left(\eta\right) = \frac{\lambda}{2 a^2} \eta^2 \hspace{1pt}, \hspace{5pt} \nabla^{1,0} \lambda \in \mathfrak{h} \left(X\right)
\end{equation}
\end{corollary}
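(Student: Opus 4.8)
The plan is to obtain Corollary \ref{cor:rescalheK} as an immediate consequence of Corollary \ref{cor:main} combined with the scale-invariance of the higher extremal K\"ahler condition recorded in equation (\ref{eq:rescalheK}). The essential point is that the two-parameter rescaling procedure, acting on the one-parameter family of K\"ahler classes produced by Corollary \ref{cor:main}, sweeps out the entire K\"ahler cone (\ref{eq:KConeX}). Concretely, given arbitrary $a, b > 0$, I would set $m := \frac{b}{a}$ and $k := a$; both are positive precisely because $a, b > 0$, and the map $\left(k, m\right) \mapsto \left(k, km\right) = \left(a, b\right)$ is a bijection from $\mathbb{R}_{> 0}^2$ onto itself, so every target class $a \mathsf{C} + b S_\infty$ is reached in this way.

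With this choice I would invoke Corollary \ref{cor:main} for the value $m = \frac{b}{a} > 0$ to produce a higher extremal K\"ahler metric $\omega$ on $X$ lying in $2 \pi \left(\mathsf{C} + m S_\infty\right)$ and satisfying $c_2 \left(\omega\right) = \frac{\lambda}{2 \left(2 \pi\right)^2} \omega^2$ with $\nabla^{1,0} \lambda \in \mathfrak{h} \left(X\right)$. I would then define the rescaled metric $\eta := \frac{k}{2 \pi} \omega = \frac{a}{2 \pi} \omega$. Computing the K\"ahler class gives $\left[\eta\right] = \frac{a}{2 \pi} \cdot 2 \pi \left(\mathsf{C} + m S_\infty\right) = a \mathsf{C} + a m S_\infty = a \mathsf{C} + b S_\infty$, using $a m = b$. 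For the top Chern form, scale-invariance (equation (\ref{eq:rescalheK}), valid for any positive rescaling factor) gives $c_2 \left(\eta\right) = c_2 \left(\omega\right)$; substituting $\omega = \frac{2 \pi}{a} \eta$, hence $\omega^2 = \frac{\left(2 \pi\right)^2}{a^2} \eta^2$, into $c_2 \left(\omega\right) = \frac{\lambda}{2 \left(2 \pi\right)^2} \omega^2$ yields $c_2 \left(\eta\right) = \frac{\lambda}{2 a^2} \eta^2$, which is exactly the required equation (\ref{eq:rescalprob}).

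It remains only to check the holomorphicity clause, and here the key observation is that the function $\lambda$ is completely unchanged by the rescaling: as noted in the derivation of (\ref{eq:rescalheK}), the connection and curvature of $k \omega$ coincide with those of $\omega$, so both the top Chern form $c_2$ and the function $\lambda$ appearing in equation (\ref{eq:defheK}) are scale-invariant. Consequently $\nabla^{1,0} \lambda$ is the same vector field for $\eta$ as for $\omega$, and since the latter lies in $\mathfrak{h} \left(X\right)$ by Corollary \ref{cor:main}, so does the former. There is no genuine analytic obstacle at this stage: the entire difficulty of the construction is already contained in Theorem \ref{thm:main} and hence in Corollary \ref{cor:main}, and the only points requiring any care in the present step are the surjectivity of the parametrization $\left(k, m\right) \mapsto \left(a, b\right)$ onto the full K\"ahler cone and the invariance of $\lambda$, and therefore of the holomorphic vector field $\nabla^{1,0} \lambda$, under the rescaling.
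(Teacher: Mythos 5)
Your proposal is correct and is essentially the paper's own argument: the paper likewise invokes Corollary \ref{cor:main} with $m = \frac{b}{a}$, rescales to $\eta := \frac{a}{2\pi}\omega \in a\left(\mathsf{C} + \frac{b}{a} S_\infty\right)$, and uses the scale-invariance (\ref{eq:rescalheK}) of the top Chern form to obtain $c_2\left(\eta\right) = \frac{\lambda}{2 a^2}\eta^2$. One small imprecision: since the musical isomorphism $\sharp$ depends on the metric, the gradient $\nabla^{1,0}\lambda$ computed with respect to $\eta$ is a positive constant multiple of the one computed with respect to $\omega$ rather than literally the same vector field, but constant multiples of holomorphic vector fields are holomorphic, so your conclusion stands.
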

\numberwithin{equation}{subsection}
\numberwithin{definition}{subsection}
\numberwithin{lemma}{subsection}
\numberwithin{theorem}{subsection}
\numberwithin{corollary}{subsection}
\numberwithin{remark}{subsection}
\numberwithin{case}{subsection}
\numberwithin{claim}{subsection}
\numberwithin{motivation}{subsection}
\numberwithin{question}{subsection}
\section{Proof of the Main Result}\label{sec:Proof}
\subsection{First Part of the Proof}\label{subsec:Proof1}
Throughout Subsections \ref{subsec:Proof1} and \ref{subsec:Proof2}, $m > 0$ is fixed. The final goal over here is to prove Theorem \ref{thm:main}. \par
\begin{motivation}
We will first prove that for every $C \in \mathbb{R}$ there exists a unique $\mathcal{C}^1$ solution $v$ to the ODE IVP (\ref{eq:ODEIVP}) on a non-degenerate interval containing $1$, and in fact this $v$ exists and is strictly increasing on $\left[1, \gamma_0\right]$ where $\gamma_0$ is the unique root of $p \left(\gamma\right) \gamma$ in $\left[1, m + 1\right]$. We will then prove that the $\mathcal{C}^1$ solution $v$ defined on any interval is always strictly positive on the interval, and as a consequence is smooth (i.e. $\mathcal{C}^\infty$) on the interval. We will finally prove a necessary and sufficient condition for the continuation of the solution $v$ defined a priori on $\left[1, \tilde{r}\right)$ for a given $\tilde{r} \in \left(1, m + 1\right]$.
\end{motivation} \par
As was noted by Pingali \cite{Pingali:2018:heK} if $v$ is a $\mathcal{C}^1$ solution to (\ref{eq:ODEIVP}) on any interval then substituting $\sqrt{v} < v + 1$ and $\left| p \left(\gamma\right) \gamma \right| \leq l$ (for some $l > 0$) in the expression for $v' = \left(v + 1\right)'$ in (\ref{eq:ODEIVP}) and applying Gr\"onwall's inequality will give us a $K > 0$ such that $v \left(\gamma\right) \leq K$ on the interval. \par
Thus solutions to (\ref{eq:ODEIVP}) are always bounded above (and always bounded below by $0$) on any interval on which they exist. So by standard ODE Theory the existence of a strictly positive lower bound on a solution of (\ref{eq:ODEIVP}) is a sufficient condition for the continuation of the solution beyond its prior interval of definition (Pingali \cite{Pingali:2018:heK}).
\begin{lemma}[Pingali]\label{lem:continue}
For a given $C \in \mathbb{R}$ let $v$ be a $\mathcal{C}^1$ solution to the ODE IVP (\ref{eq:ODEIVP}) existing on $\left[1, \tilde{r}\right) \subseteq \left[1, m + 1\right]$. If there exists an $\epsilon > 0$ such that $v \left(\gamma\right) \geq \epsilon$ on $\left[1, \tilde{r}\right)$ then $v$ can be continued beyond $\tilde{r}$.
\end{lemma}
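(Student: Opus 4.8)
The plan is to exploit the fact that the only obstruction to continuing a solution of (\ref{eq:ODEIVP}) is the loss of regularity of the right-hand side $f \left(\gamma, v\right) := 2 \sqrt{2} \sqrt{v} + p \left(\gamma\right) \gamma$ at $v = 0$, where the square root ceases to be locally Lipschitz. On the region $\left\lbrace v > 0 \right\rbrace$ the function $f$ is smooth in $\left(\gamma, v\right)$, so once a positive lower bound has been secured we are squarely within the standard Picard-Lindel\"of continuation theory, and the result becomes a routine patching argument.

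First I would invoke the a priori bounds already established before the lemma: on $\left[1, \tilde{r}\right)$ the Gr\"onwall argument gives $0 \leq v \leq K$ for some $K > 0$, and by hypothesis $v \geq \epsilon$. Feeding these into the ODE shows that $\left| v' \right| \leq 2 \sqrt{2} \sqrt{K} + \sup_{\left[1, m + 1\right]} \left| p \left(\gamma\right) \gamma \right|$ is bounded on $\left[1, \tilde{r}\right)$, so $v$ is Lipschitz, hence uniformly continuous, on $\left[1, \tilde{r}\right)$. Consequently the one-sided limit $v_\star := \lim_{\gamma \to \tilde{r}^-} v \left(\gamma\right)$ exists and satisfies $\epsilon \leq v_\star \leq K$; in particular $v_\star > 0$. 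Since $f$ is continuous, $v' \left(\gamma\right) = f \left(\gamma, v \left(\gamma\right)\right) \to f \left(\tilde{r}, v_\star\right)$ as $\gamma \to \tilde{r}^-$, so $v$ extends to a $\mathcal{C}^1$ function on the closed interval $\left[1, \tilde{r}\right]$ with $v \left(\tilde{r}\right) = v_\star$ and $v' \left(\tilde{r}\right) = f \left(\tilde{r}, v_\star\right)$.

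Next I would apply the Picard-Lindel\"of theorem at the point $\left(\tilde{r}, v_\star\right)$. Because $v_\star > 0$, the map $v \mapsto \sqrt{v}$ is smooth, hence locally Lipschitz, on a neighbourhood of $v_\star$, so $f$ is locally Lipschitz in $v$ there; this yields a unique $\mathcal{C}^1$ (in fact $\mathcal{C}^\infty$) solution $\tilde{v}$ of the ODE on some interval $\left[\tilde{r}, \tilde{r} + \delta\right)$ with $\tilde{v} \left(\tilde{r}\right) = v_\star$. Concatenating $v$ on $\left[1, \tilde{r}\right]$ with $\tilde{v}$ on $\left[\tilde{r}, \tilde{r} + \delta\right)$ produces a solution on $\left[1, \tilde{r} + \delta\right)$, and it is $\mathcal{C}^1$ across $\tilde{r}$ because both one-sided derivatives there equal $f \left(\tilde{r}, v_\star\right)$. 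This continues $v$ strictly beyond $\tilde{r}$, as required.

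The crux of the argument, and the reason the hypothesis $v \geq \epsilon$ is indispensable, is the step that places us at a point $\left(\tilde{r}, v_\star\right)$ with $v_\star > 0$: without a positive lower bound the limit $v_\star$ could equal $0$, where $\sqrt{v}$ is not Lipschitz and the standard uniqueness and continuation machinery breaks down. Everything else, namely the existence of the one-sided limit and the $\mathcal{C}^1$ matching of the two pieces, is routine bookkeeping. I would also note that the very positivity that enables this continuation, together with the smoothness of $f$ on $\left\lbrace v > 0 \right\rbrace$, is exactly what upgrades the continued $\mathcal{C}^1$ solution to a $\mathcal{C}^\infty$ one, consistent with the claim recorded in the preceding motivation.
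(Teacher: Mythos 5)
Your proposal is correct and is exactly the standard continuation argument that the paper itself invokes: the paper does not write out a proof of this lemma but attributes it to Pingali and ``standard ODE Theory,'' relying on precisely the same ingredients you use --- the Gr\"onwall upper bound $v \leq K$, the hypothesized lower bound $v \geq \epsilon$ keeping the solution in a region where $\sqrt{\cdot}$ is locally Lipschitz, the resulting existence of the one-sided limit at $\tilde{r}$, and a Picard--Lindel\"of restart there. Your writeup is simply the detailed version of that sketch, so there is nothing to reconcile.
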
 \par
We will prove the converse of Lemma \ref{lem:continue} (viz. Theorem \ref{thm:continue}), but before that we prove some more basic results:
\begin{lemma}\label{lem:firstexist}
For every $C \in \mathbb{R}$ there exists a unique $\mathcal{C}^1$ solution $v$ to the ODE IVP (\ref{eq:ODEIVP}) on $\left[1, r\right)$ for some $r \in \left(1, m + 1\right]$ such that $v' > 0$ on $\left[1, r\right)$. If $\left[1, r'\right) \subseteq \left[1, m + 1\right]$ is the maximal interval of existence of $v$ then $\left[1, \gamma_0\right] \subseteq \left[1, r'\right)$ and $v' > 0$ on $\left[1, \gamma_0\right]$. Similarly if $\left[1, m + 1\right]$ is the maximal interval of existence of $v$ then $\left[1, \gamma_0\right] \subseteq \left[1, m + 1\right]$ and $v' > 0$ on $\left[1, \gamma_0\right]$.
\end{lemma}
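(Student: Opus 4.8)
The plan is to combine the positivity of the initial datum with Pingali's continuation criterion (Lemma \ref{lem:continue}), using the sign of $p\left(\gamma\right)\gamma$ on $\left[1,\gamma_0\right)$ to force $v'>0$ and simultaneously to keep $v$ bounded away from zero so that the solution survives up to and including $\gamma_0$. First I would establish local existence and uniqueness near $\gamma=1$. Since $v\left(1\right)=2>0$, the right-hand side $2\sqrt{2}\sqrt{v}+p\left(\gamma\right)\gamma$ is smooth, hence locally Lipschitz in $v$, on a neighbourhood of $\left(1,2\right)$ on which $v>0$; Picard--Lindel\"of then yields a unique $\mathcal{C}^1$ solution on some $\left[1,r\right)$. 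Evaluating the ODE at $\gamma=1$ gives $v'\left(1\right)=2\sqrt{2}\sqrt{2}+p\left(1\right)=4+2=6>0$, using $p\left(1\right)=2$ from Lemma \ref{lem:polynom}, so by continuity of $v'$ we may shrink $r$ to ensure $v'>0$ on $\left[1,r\right)$.

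Next I would propagate positivity along the maximal interval of existence $\left[1,r'\right)$. The key observation is that on $\left[1,\gamma_0\right)$ Lemma \ref{lem:polynom} gives $p\left(\gamma\right)\gamma>0$, so whenever $v\ge 0$ the equation forces $v'=2\sqrt{2}\sqrt{v}+p\left(\gamma\right)\gamma>0$. Formally, suppose $v'$ vanishes somewhere in $\left[1,\gamma_0\right)\cap\left[1,r'\right)$ and let $\gamma_1$ be the first such point; then $v$ is strictly increasing on $\left[1,\gamma_1\right)$, so $v\left(\gamma_1\right)\ge v\left(1\right)=2>0$, whence $v'\left(\gamma_1\right)=2\sqrt{2}\sqrt{v\left(\gamma_1\right)}+p\left(\gamma_1\right)\gamma_1>0$, a contradiction. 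Hence $v'>0$ and consequently $v\ge 2$ on $\left[1,\gamma_0\right)\cap\left[1,r'\right)$; in particular uniqueness persists there, the solution staying in the region $v>0$ where the square root is Lipschitz.

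Then I would close up the interval. Because $p\left(1\right)=2$ and $p\left(m+1\right)=-2$ are both nonzero, the root $\gamma_0$ of $p\left(\gamma\right)\gamma$ is neither endpoint, so $\gamma_0\in\left(1,m+1\right)$ lies strictly interior to $\left[1,m+1\right]$. The lower bound $v\ge 2$ on $\left[1,r'\right)$ together with Lemma \ref{lem:continue} (applied with $\epsilon=2$) then rules out $r'\le\gamma_0$: otherwise $v$ would continue past $r'$, contradicting maximality. Therefore $\left[1,\gamma_0\right]\subseteq\left[1,r'\right)$, and the identical bound applies in the case where the maximal interval is the full closed interval $\left[1,m+1\right]$. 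Finally, at $\gamma_0$ we have $p\left(\gamma_0\right)\gamma_0=0$, so $v'\left(\gamma_0\right)=2\sqrt{2}\sqrt{v\left(\gamma_0\right)}>0$ since $v\left(\gamma_0\right)\ge 2$; combined with the previous step this yields $v'>0$ on the closed interval $\left[1,\gamma_0\right]$.

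The main obstacle is that the square-root nonlinearity fails to be Lipschitz at $v=0$, so standard ODE theory alone does not guarantee that the solution extends to all of $\left[1,\gamma_0\right]$. The crux is therefore to extract the uniform lower bound $v\ge 2$ purely from the sign of $p\left(\gamma\right)\gamma$ on $\left[1,\gamma_0\right)$, after which Pingali's continuation criterion (Lemma \ref{lem:continue}) delivers existence on the closed interval and the monotonicity statement follows for free.
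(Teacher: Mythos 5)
Your proposal is correct and follows essentially the same route as the paper: Picard--Lindel\"of at the strictly positive initial datum $v\left(1\right)=2$ (where the square root is Lipschitz), the sign of $p\left(\gamma\right)\gamma$ on $\left[1,\gamma_0\right)$ from Lemma \ref{lem:polynom} to force $v'\geq 0$ and hence the lower bound $v\geq 2$, and then Lemma \ref{lem:continue} to contradict maximality of $r'$ if $r'\leq\gamma_0$. The only cosmetic difference is that you run a ``first zero of $v'$'' argument where the paper simply notes $v'\geq 0$ directly from the nonnegativity of both terms; the conclusion and mechanism are identical.
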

\begin{proof}
As noted in Subsection \ref{subsec:AnalysisODEBVP}, $v \left(1\right) = 2$ will imply $v' \left(1\right) = 6$ in (\ref{eq:ODEIVP}). Since $\sqrt{v \left(1\right)} = \sqrt{2} > 0$ so the RHS of (\ref{eq:ODEIVP}) is continuous in $\gamma$ and locally Lipschitz in $v$ in a neighbourhood of $1$ and by standard ODE Theory there exists a unique $\mathcal{C}^1$ solution $v$ to (\ref{eq:ODEIVP}) on $\left[1, r\right)$ for some $r \in \left(1, m + 1\right]$. Since $v' \left(1\right) = 6 > 0$ so this $r \in \left(1, m + 1\right]$ can be chosen so that $v' > 0$ on $\left[1, r\right)$. \\
Let $\left[1, r'\right) \subseteq \left[1, m + 1\right]$ be the maximal interval of existence of $v$. If $\gamma_0 \geq r'$ then by Lemma \ref{lem:polynom}, $p \left(\gamma\right) \gamma \geq 0$ on $\left[1, r'\right)$ and hence $v' \geq 0$ on $\left[1, r'\right)$. So $v \left(\gamma\right) \geq v \left(1\right) = 2 > 0$ on $\left[1, r'\right)$ and by Lemma \ref{lem:continue}, $v$ can be continued beyond $r'$ contradicting the maximality of $r'$. So $\left[1, \gamma_0\right] \subseteq \left[1, r'\right)$ and $v' \geq 0$ on $\left[1, \gamma_0\right]$, but as $\sqrt{v \left(\gamma\right)} > 0$ on $\left[1, \gamma_0\right]$ so $v' > 0$ on $\left[1, \gamma_0\right]$. \\
If $\left[1, m + 1\right]$ is the maximal interval of existence of $v$ then by Lemma \ref{lem:polynom}, $\left[1, \gamma_0\right] \subseteq \left[1, m + 1\right]$ and by the same arguments as in the above case, $v' > 0$ on $\left[1, \gamma_0\right]$.
\end{proof} \par
\begin{remark}
Note that $v' > 0$ on $\left[1, \gamma_0\right]$ actually implies $v' > 0$ on $\left[1, \gamma_0'\right) \subseteq \left[1, r'\right)$ (or $\left[1, \gamma_0'\right) \subseteq \left[1, m + 1\right]$) for some $\gamma_0' > \gamma_0$.
\end{remark} \par
Observe that if there exists a $\mathcal{C}^1$ solution $v$ to (\ref{eq:ODEIVP}) on any interval then $v \geq 0$ on the interval, but Lemma \ref{lem:positivity} is saying that $v > 0$ on the interval.
\begin{lemma}[Positivity of Solutions]\label{lem:positivity}
For a given $C \in \mathbb{R}$ let $v$ be the unique $\mathcal{C}^1$ solution to the ODE IVP (\ref{eq:ODEIVP}) existing on some interval containing $1$.
\begin{enumerate}
\item If $\left[1, r'\right) \subseteq \left[1, m + 1\right]$ is the maximal interval of existence of $v$ then $v \left(\gamma\right) > 0$ for all $\gamma \in \left[1, r'\right)$ and $\lim\limits_{\gamma \to r'} v \left(\gamma\right) = 0$ and $\lim\limits_{\gamma \to r'} v' \left(\gamma\right) < 0$. \label{itm:posit1}
\item If $\left[1, m + 1\right]$ is the maximal interval of existence of $v$ then $v \left(\gamma\right) > 0$ for all $\gamma \in \left[1, m + 1\right]$. \label{itm:posit2}
\end{enumerate}
\end{lemma}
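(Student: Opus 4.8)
The plan is to exploit the sign structure of $p(\gamma)\gamma$ established in Lemma \ref{lem:polynom} together with the continuation criterion of Lemma \ref{lem:continue}. By Lemma \ref{lem:firstexist} we already know that $\left[1, \gamma_0\right]$ lies in the interval of existence and that $v$ is strictly increasing, hence strictly positive, on $\left[1, \gamma_0\right]$; consequently any zero of $v$ must occur strictly to the right of $\gamma_0$, precisely where $p(\gamma)\gamma < 0$. The first step is a ``first-zero'' argument for interior positivity: if $v$ vanished at an interior point $\gamma_1$ of its domain, then necessarily $\gamma_1 > \gamma_0$, and reading off the ODE in (\ref{eq:ODEIVP}) gives $v'(\gamma_1) = p(\gamma_1)\gamma_1 < 0$. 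This forces $v$ to become negative immediately past $\gamma_1$, contradicting $v \geq 0$ (which is automatic since $\sqrt{v}$ appears in the equation). Hence $v > 0$ throughout the open part of its domain.

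For part (\ref{itm:posit1}) I would then pin down the limiting behaviour at $r'$. Since $\left[1, r'\right)$ is the maximal interval of existence, the contrapositive of Lemma \ref{lem:continue} forbids $v$ from being bounded below by a positive constant on $\left[1, r'\right)$; combined with interior positivity this forces $\liminf_{\gamma \to r'} v(\gamma) = 0$. To upgrade this to a genuine limit, I use that $p(\gamma)\gamma$ is continuous and strictly negative on any $\left[a, r'\right] \subseteq (\gamma_0, m+1]$, hence bounded above there by some $-\mu < 0$. Choosing $\epsilon_0 > 0$ with $2\sqrt{2}\sqrt{\epsilon_0} < \mu/2$, the ODE shows that whenever $v \leq \epsilon_0$ on $\left[a, r'\right)$ we have $v' \leq 2\sqrt{2}\sqrt{\epsilon_0} - \mu < -\mu/2 < 0$. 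Thus once $v$ first drops below $\epsilon_0$ near $r'$ (which it must, as $\liminf v = 0$) it remains below $\epsilon_0$ and is strictly decreasing up to $r'$, so $v$ converges and the limit equals the $\liminf$, namely $0$. Substituting $v \to 0$ back into (\ref{eq:ODEIVP}) and using continuity of $p(\gamma)\gamma$ then gives $\lim_{\gamma \to r'} v'(\gamma) = p(r')\,r' < 0$, since $r' \in (\gamma_0, m+1]$.

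For part (\ref{itm:posit2}) the interior positivity argument already yields $v > 0$ on $\left[1, m+1\right)$, so only the right endpoint remains. If $v(m+1) = 0$ held, then the same reading of (\ref{eq:ODEIVP}) gives $v'(m+1) = p(m+1)(m+1) = -2(m+1) < 0$, so the solution would degenerate at $m+1$ exactly as in part (\ref{itm:posit1}) with $r' = m+1$; but that is the mutually exclusive alternative to the present case, in which the maximal interval is the full closed $\left[1, m+1\right]$ carrying a positive solution. Hence $v(m+1) > 0$, and so $v > 0$ on all of $\left[1, m+1\right]$.

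The step I expect to be the main obstacle is converting $\liminf_{\gamma \to r'} v = 0$ into an honest limit in part (\ref{itm:posit1}). Interior positivity is nearly immediate once one spots the sign of $v'$ at a putative zero, but excluding oscillation of $v$ as $\gamma \to r'$ genuinely relies on the quantitative ``once $v$ is small, $v'$ is strictly negative'' estimate powered by the strict negativity of $p(\gamma)\gamma$ beyond $\gamma_0$. Some additional care is needed to keep this uniform in the sub-case $r' = m+1$, where continuation ``beyond'' $r'$ is to be read against the ambient polynomial ODE rather than within $\left[1, m+1\right]$.
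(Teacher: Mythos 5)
Your proposal is correct and follows essentially the same route as the paper: positivity up to $\gamma_0$ from Lemma \ref{lem:firstexist}, the sign of $p\left(\gamma\right)\gamma$ beyond $\gamma_0$ to rule out interior zeros by reading $v'$ off the ODE at a putative zero, and the negation of Lemma \ref{lem:continue} at $r'$. The only cosmetic differences are that you obtain the existence of $\lim_{\gamma \to r'} v\left(\gamma\right)$ via the quantitative ``once $v$ is small it is strictly decreasing'' estimate whereas the paper gets it from the Lipschitz bound on $v$ and the uniform continuity of $v'$, and your treatment of the endpoint case $v\left(m+1\right) = 0$ invokes the same convention the paper uses, namely that such a solution counts as breaking down at $m + 1$.
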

\begin{proof}
\begin{case*}(\ref{itm:posit1}) \hspace{2pt}
If $\left[1, r'\right)$ is the maximal interval of existence of $v$ then by Lemma \ref{lem:firstexist}, $\gamma_0 \in \left[1, r'\right)$ and $v' \left(\gamma_0\right) > 0$ and also $v \left(\gamma_0\right) \geq v \left(1\right) = 2 > 0$. Since $v$ cannot be continued beyond $r'$ so by Lemma \ref{lem:continue}, $\inf\limits_{\gamma \in \left[1, r'\right)} v \left(\gamma\right) = 0$. If $t_0 \in \left(1, r'\right)$ is such that $v \left(t_0\right) = 0$ then $t_0$ must be a point of local minimum of $v$ and so $v' \left(t_0\right) = 0$ which will imply $p \left(t_0\right) t_0 = 0$ and then $t_0 = \gamma_0$ (by the uniqueness of $\gamma_0$) which contradicts the first assertion above. So $v \left(\gamma\right) > 0$ for all $\gamma \in \left[1, r'\right)$. \\
Now since $v$ and $p \left(\gamma\right) \gamma$ are bounded on $\left[1, r'\right)$, from the expression of $v'$ in (\ref{eq:ODEIVP}) we get $v'$ is bounded, thereby implying $v$ is Lipschitz on $\left[1, r'\right)$. So $\lim\limits_{\gamma \to r'} v \left(\gamma\right)$ exists and as $v > 0$ on $\left[1, r'\right)$ so $\lim\limits_{\gamma \to r'} v \left(\gamma\right) = \inf\limits_{\gamma \in \left[1, r'\right)} v \left(\gamma\right) = 0$. \\
Now as $\sqrt{\cdot}$ on $\left[0, \infty\right)$, $v$ on $\left[1, r'\right)$ and $p \left(\gamma\right) \gamma$ on $\left[1, m + 1\right]$ are uniformly continuous so from (\ref{eq:ODEIVP}), $v'$ is uniformly continuous on $\left[1, r'\right)$ and so $\lim\limits_{\gamma \to r'} v' \left(\gamma\right)$ exists. Since $\lim\limits_{\gamma \to r'} v \left(\gamma\right) = 0$ and $v > 0$ on $\left[1, r'\right)$ so $\lim\limits_{\gamma \to r'} v' \left(\gamma\right) \leq 0$. If $\lim\limits_{\gamma \to r'} v' \left(\gamma\right) = 0$ then from (\ref{eq:ODEIVP}) we will get $r'$ is a root of $p \left(\gamma\right) \gamma$ which is not possible by Lemma \ref{lem:firstexist}. So $\lim\limits_{\gamma \to r'} v' \left(\gamma\right) < 0$.
\end{case*}
\begin{case*}(\ref{itm:posit2}) \hspace{2pt}
If $\left[1, m + 1\right]$ is the maximal interval of existence of $v$ then $v \geq 0$ on $\left[1, m + 1\right]$ and $v \left(1\right) = 2 > 0$ and from Lemma \ref{lem:firstexist}, $v' \left(\gamma_0\right) > 0$ and $v \left(\gamma_0\right) > 0$. So by the same argument as in Case (\ref{itm:posit1}), there cannot exist a $t_0 \in \left(1, m + 1\right)$ such that $v \left(t_0\right) = 0$. So $v \left(\gamma\right) > 0$ for all $\gamma \in \left[1, m + 1\right)$. \\
Let if possible $v \left(m + 1\right) = 0$. Since $v \geq 0$ and $v$ is $\mathcal{C}^1$ on $\left[1, m + 1\right]$ so $v' \left(m + 1\right) \leq 0$. If $v' \left(m + 1\right) = 0$ then from (\ref{eq:ODEIVP}) we will get $m + 1$ is a root of $p \left(\gamma\right) \gamma$ which is not possible by Lemma \ref{lem:polynom}. So $v' \left(m + 1\right) < 0$ i.e. $v$ is strictly decreasing in a neighbourhood of $m + 1$. Now $v$ is the $\mathcal{C}^1$ solution of (\ref{eq:ODEIVP}) and $v > 0$ on $\left[1, m + 1\right)$, and for $v$ to be extendable as the $\mathcal{C}^1$ solution to (\ref{eq:ODEIVP}) on an interval strictly containing $\left[1, m + 1\right)$ we must have $v \geq 0$ on the larger interval which will not be possible with $v' \left(m + 1\right) < 0$. So in that case, $v$ will exist as the $\mathcal{C}^1$ solution of (\ref{eq:ODEIVP}) maximally on $\left[1, m + 1\right)$, a contradiction to the hypothesis. So $v \left(m + 1\right) > 0$ and hence $v \left(\gamma\right) > 0$ for all $\gamma \in \left[1, m + 1\right]$.
\end{case*}
\end{proof} \par
Before proceeding further let us observe the following:
\begin{corollary}[Smoothness of Solutions]\label{cor:InfinitDiff}
Let $v$ be the $\mathcal{C}^1$ solution of the ODE IVP (\ref{eq:ODEIVP}) on a non-degenerate interval $\mathsf{J}$. Then $v'$ is bounded and uniformly continuous on $\mathsf{J}$, and $v^{\left(i\right)}$ exists on $\mathsf{J}$ for all $i \in \mathbb{N}_{\geq 2}$ i.e. $v$ is $\mathcal{C}^\infty$ on $\mathsf{J}$. For all $i \in \mathbb{N}_{\geq 2}$ if $\inf\limits_{\mathsf{J}} v > 0$ then $v^{\left(i\right)}$ is bounded on $\mathsf{J}$ and if $\inf\limits_{\mathsf{J}} v = 0$ then $v^{\left(i\right)}$ is unbounded on $\mathsf{J}$.
\end{corollary}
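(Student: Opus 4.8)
The plan is to treat the three assertions in order, the decisive input throughout being that $v > 0$ pointwise on $\mathsf{J}$ by Lemma \ref{lem:positivity}, so that the only non-smooth ingredient on the right-hand side of (\ref{eq:ODEIVP}), the square root, causes no trouble in the interior. Write $q := p\left(\gamma\right)\gamma$ for brevity. First I would record the boundedness and uniform continuity of $v'$: by the Gr\"onwall estimate preceding Lemma \ref{lem:continue} we have $0 \leq v \leq K$ on $\mathsf{J}$ for some $K > 0$, while $q$ is a polynomial and so is bounded and Lipschitz on $\left[1, m + 1\right]$. Hence $\left| v' \right| \leq 2 \sqrt{2} \sqrt{K} + \sup_{\left[1, m + 1\right]} \left| q \right|$ is bounded, so $v$ is Lipschitz on $\mathsf{J}$; since $\sqrt{\cdot}$ is uniformly continuous on the compact range $\left[0, K\right]$ and $v$ is Lipschitz, $\sqrt{v}$ is uniformly continuous, and adding the uniformly continuous $q$ shows $v'$ is uniformly continuous on $\mathsf{J}$.

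Next I would prove smoothness by bootstrapping on the regularity of $\sqrt{v}$. Since $v > 0$ on $\mathsf{J}$, the map $t \mapsto \sqrt{t}$ is $\mathcal{C}^\infty$ in a neighbourhood of each value $v\left(\gamma\right)$, so $\sqrt{v}$ is exactly as regular as $v$ on $\mathsf{J}$. Thus if $v \in \mathcal{C}^k\left(\mathsf{J}\right)$ then the right-hand side $2 \sqrt{2} \sqrt{v} + q$ lies in $\mathcal{C}^k\left(\mathsf{J}\right)$, whence $v' \in \mathcal{C}^k\left(\mathsf{J}\right)$ and $v \in \mathcal{C}^{k + 1}\left(\mathsf{J}\right)$; starting from $k = 1$ and inducting yields $v \in \mathcal{C}^\infty\left(\mathsf{J}\right)$.

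For the dichotomy on $v^{\left(i\right)}$ with $i \geq 2$, I would differentiate (\ref{eq:ODEIVP}) repeatedly and, whenever a factor $v'$ appears, eliminate it using $v' = 2 \sqrt{2} \sqrt{v} + q$. An induction on $i$ then shows that $v^{\left(i\right)}$ is a finite sum of terms, each a polynomial in $\gamma$ (assembled from derivatives of $q$) times a power $v^{-k/2}$ with $k \geq 0$, and that there is a unique most singular term, namely $c_i \, q^{i - 1} \, v^{-\left(2 i - 3\right)/2}$ with $c_i \neq 0$ (explicitly $c_2 = \sqrt{2}$ and $c_{i + 1} = -\frac{2 i - 3}{2} c_i$). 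For instance $v'' = 4 + q' + \sqrt{2} \, q \, v^{-1/2}$ and $v''' = q'' + \sqrt{2} \, q' \, v^{-1/2} - 2 q \, v^{-1} - \frac{\sqrt{2}}{2} q^2 \, v^{-3/2}$. If $\inf_{\mathsf{J}} v = \delta > 0$ then every power $v^{-k/2} \leq \delta^{-k/2}$ is bounded, so each summand, and hence $v^{\left(i\right)}$, is bounded on $\mathsf{J}$. If instead $\inf_{\mathsf{J}} v = 0$, then by the first case of Lemma \ref{lem:positivity} we are on the maximal interval $\mathsf{J} = \left[1, r'\right)$ with $v \to 0$ as $\gamma \to r'$; since $\left[1, \gamma_0\right] \subseteq \left[1, r'\right)$ by Lemma \ref{lem:firstexist} we have $r' > \gamma_0$, so $q\left(r'\right) = p\left(r'\right) r' \neq 0$ by Lemma \ref{lem:polynom}. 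Consequently the leading term $c_i q^{i - 1} v^{-\left(2 i - 3\right)/2}$ has a nonzero limiting coefficient and, being strictly more singular than every other summand, forces $\left| v^{\left(i\right)}\left(\gamma\right) \right| \to \infty$ as $\gamma \to r'$, so $v^{\left(i\right)}$ is unbounded on $\mathsf{J}$.

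The \emph{main obstacle} is precisely the bookkeeping in this last step: one must be sure that the most singular power $v^{-\left(2 i - 3\right)/2}$ genuinely occurs, with a coefficient that neither vanishes nor is cancelled by the remaining summands. This is what the induction delivers, since differentiating $c_i q^{i - 1} v^{-\left(2 i - 3\right)/2}$ and substituting for $v'$ produces the new leading term $c_{i + 1} q^{i} v^{-\left(2 i - 1\right)/2}$ with $c_{i + 1} = -\frac{2 i - 3}{2} c_i \neq 0$, while every other summand of $v^{\left(i\right)}$ differentiates to strictly less singular powers; combined with $q\left(r'\right) \neq 0$, this guarantees the coefficient does not degenerate in the limit $\gamma \to r'$.
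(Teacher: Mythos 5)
Your proposal is correct and follows essentially the same route as the paper's (very terse) proof: use the positivity of $v$ from Lemma \ref{lem:positivity} to bootstrap regularity through the ODE, then differentiate (\ref{eq:ODEIVP}) repeatedly and induct, reading off boundedness or unboundedness of $v^{\left(i\right)}$ from the powers of $v^{-1/2}$ that appear. Your explicit identification of the leading singular term $c_i q^{i-1} v^{-\left(2i-3\right)/2}$ and the check that $q\left(r'\right) \neq 0$ simply make precise the induction the paper leaves implicit.
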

\begin{proof}
By Lemma \ref{lem:positivity}, $v > 0$ on $\mathsf{J}$ and so $\frac{1}{\sqrt{v}}$ makes sense. Considering the ODE in (\ref{eq:ODEIVP}) and its $\left(i - 1\right)$\textsuperscript{th} derivative, substituting the implied property of $\sqrt{\cdot}$, $v$ and $p \left(\gamma\right) \gamma$ in both the cases into these ODEs and using induction will give us the required results for all $i \in \mathbb{N}_{\geq 2}$.
\end{proof} \par
Observe one more thing that the solution $v$ cannot be constant on any non-degenerate interval $\mathsf{J}$, as that would imply (again from (\ref{eq:ODEIVP})) that the polynomial $p \left(\gamma\right) \gamma$ is a constant polynomial which is not possible by Lemma \ref{lem:polynom}. \par
We now have the following necessary and sufficient condition for the continuation of the solution to (\ref{eq:ODEIVP}) defined a priori on some interval:
\begin{theorem}[Criterion for Continuation of Solutions]\label{thm:continue}
For any $C \in \mathbb{R}$ if $v$ is the smooth solution to (\ref{eq:ODEIVP}) defined on an interval $\left[1, \tilde{r}\right) \subseteq \left[1, m + 1\right]$ then:
\begin{enumerate}
\item $v$ can be continued beyond $\tilde{r}$ if and only if $\inf\limits_{\gamma \in \left[1, \tilde{r}\right)} v \left(\gamma\right) = \epsilon_1 > 0$ if and only if $\lim\limits_{\gamma \to \tilde{r}} v \left(\gamma\right) = \epsilon_2 > 0$. \label{itm:cont1}
\item $\left[1, \tilde{r}\right)$ is the maximal interval of existence of $v$ if and only if $\inf\limits_{\gamma \in \left[1, \tilde{r}\right)} v \left(\gamma\right) = 0$ if and only if $\lim\limits_{\gamma \to \tilde{r}} v \left(\gamma\right) = 0$. \label{itm:cont2}
\end{enumerate}
\end{theorem}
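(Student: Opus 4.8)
The plan is to establish this as the promised converse to Lemma~\ref{lem:continue}: I will prove the full three-way chain of equivalences in Part~(\ref{itm:cont1}) directly, and then obtain Part~(\ref{itm:cont2}) for free by taking logical complements, using that both $\inf_{\gamma\in[1,\tilde r)} v(\gamma)$ and the limiting value are automatically $\geq 0$. The first preliminary step is to pin down that $\lim_{\gamma\to\tilde r} v(\gamma)$ even exists so that $\epsilon_2$ is meaningful: by the Gr\"onwall argument recorded just before Lemma~\ref{lem:continue} (equivalently by Corollary~\ref{cor:InfinitDiff}) $v$ is bounded above, and since $p\left(\gamma\right)\gamma$ is bounded on the compact interval $[1,m+1]$, the equation in (\ref{eq:ODEIVP}) forces $v'$ to be bounded on $[1,\tilde r)$. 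Hence $v$ is Lipschitz there, so $\lim_{\gamma\to\tilde r} v(\gamma)$ exists and equals some $\epsilon_2\geq 0$. I also record at the outset that $v>0$ throughout $[1,\tilde r)$: this is Lemma~\ref{lem:positivity} applied to the maximal interval of existence of $v$ (which contains $[1,\tilde r)$), so in particular $\inf_{\gamma\in[1,\tilde r)} v(\gamma)\geq 0$.

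With these in hand I would run the equivalence chain for Part~(\ref{itm:cont1}) as follows. First, $\inf_{[1,\tilde r)} v=\epsilon_1>0$ implies continuation beyond $\tilde r$: this is exactly Lemma~\ref{lem:continue}. Next, $\epsilon_1>0 \Longleftrightarrow \epsilon_2>0$: one direction is immediate, since $\epsilon_1\leq v(\gamma)$ for all $\gamma$ gives $\epsilon_1\leq \epsilon_2$ in the limit, so $\epsilon_1>0\Rightarrow\epsilon_2>0$; for the converse, if $\epsilon_2>0$ then $v\geq \epsilon_2/2$ on some $[\tilde r-\delta,\tilde r)$, while on the compact remainder $[1,\tilde r-\delta]$ the continuous strictly positive $v$ attains a positive minimum, so $\inf_{[1,\tilde r)} v>0$. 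Finally, continuation implies $\epsilon_2>0$: if $v$ extends past $\tilde r$ then $\tilde r$ lies in the (now larger) interval of existence, so $v(\tilde r)$ is defined, equals $\epsilon_2$ by continuity, and is $>0$ by the positivity statement of Lemma~\ref{lem:positivity}. Closing the loop among ``continuation'', ``$\epsilon_1>0$'' and ``$\epsilon_2>0$'' then proves Part~(\ref{itm:cont1}).

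Part~(\ref{itm:cont2}) would then follow formally: $[1,\tilde r)$ is the maximal interval of existence precisely when $v$ cannot be continued beyond $\tilde r$, which by Part~(\ref{itm:cont1}) is the negation of ``$\inf_{[1,\tilde r)} v>0$'', i.e.\ (since the infimum is $\geq 0$) it is ``$\inf_{[1,\tilde r)} v=0$'', and likewise the negation of ``$\lim_{\gamma\to\tilde r} v=\epsilon_2>0$'' is ``$\lim_{\gamma\to\tilde r} v=0$''. I expect the only genuinely delicate point to be the implication $\epsilon_2>0\Rightarrow\epsilon_1>0$, where one must separate the behaviour near the endpoint $\tilde r$ (controlled by the limit) from the behaviour on the compact subinterval (controlled by continuity and strict positivity); everything else is an assembly of Lemmas~\ref{lem:continue} and~\ref{lem:positivity} together with the boundedness of $v'$. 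As a consistency check, Lemma~\ref{lem:positivity}(\ref{itm:posit1}) already asserts $\lim_{\gamma\to r'} v(\gamma)=0$ on a maximal interval $[1,r')$, which matches exactly the ``$\lim=0$'' characterization in Part~(\ref{itm:cont2}).
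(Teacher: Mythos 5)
Your proposal is correct and follows essentially the same route as the paper: both rest on Lemma~\ref{lem:continue} for the forward direction, on Lemma~\ref{lem:positivity} together with the Lipschitz bound from Corollary~\ref{cor:InfinitDiff} to make $\inf$ and $\lim$ exist and agree in sign, and on positivity of the extended solution at $\tilde{r}$ for the converse (which the paper phrases as a contradiction from $\inf = 0$, while you argue the contrapositive directly). Your compact-subinterval argument for $\epsilon_2>0\Rightarrow\epsilon_1>0$ just makes explicit what the paper dismisses as ``easily checked.''
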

\begin{proof}
From Lemma \ref{lem:positivity} and Corollary \ref{cor:InfinitDiff}, $v > 0$ and $v$ is Lipschitz on $\left[1, \tilde{r}\right)$ so $\inf\limits_{\gamma \in \left[1, \tilde{r}\right)} v \left(\gamma\right)$ and $\lim\limits_{\gamma \to \tilde{r}} v \left(\gamma\right)$ both exist and are non-negative. So again by using Lemma \ref{lem:positivity} and Corollary \ref{cor:InfinitDiff} it can be easily checked that either both $\inf\limits_{\gamma \in \left[1, \tilde{r}\right)} v \left(\gamma\right)$ and $\lim\limits_{\gamma \to \tilde{r}} v \left(\gamma\right)$ are simultaneously strictly positive or both are simultaneously zero. From Lemma \ref{lem:continue} we already have that if both are simultaneously positive then $v$ can be continued beyond $\tilde{r}$. For proving the converse let $\inf\limits_{\gamma \in \left[1, \tilde{r}\right)} v \left(\gamma\right) = \lim\limits_{\gamma \to \tilde{r}} v \left(\gamma\right) = 0$. Let if possible $v$ be extendable as the smooth solution of (\ref{eq:ODEIVP}) to an interval $\left[1, r'\right)$ with $\left[1, \tilde{r}\right] \subseteq \left[1, r'\right) \subseteq \left[1, m + 1\right]$. Then by Lemma \ref{lem:positivity}, $v > 0$ on $\left[1, r'\right)$ and as $\tilde{r} \in \left[1, r'\right)$ and $v$ is continuous on $\left[1, r'\right)$ so $0 = \lim\limits_{\gamma \to \tilde{r}} v \left(\gamma\right) = v \left(\tilde{r}\right) > 0$, a contradiction. So $\left[1, \tilde{r}\right)$ is the maximal interval of existence of $v$.
\end{proof} \par
So if the smooth solution $v$ to (\ref{eq:ODEIVP}) cannot be defined on $\left[1, m + 1\right]$ then there exists a unique $\gamma_\star \in \left(1, m + 1\right]$ such that $\left[1, \gamma_\star\right)$ is the maximal interval of existence of $v$. \par
\begin{remark}
Lemma \ref{lem:positivity} and Theorem \ref{thm:continue} are together saying that the solution of (\ref{eq:ODEIVP}) continues to exist as long as it is strictly positive, but the moment it attains zero, it `breaks down' i.e. it cannot be continued further.
\end{remark} \par
So finally for every $C \in \mathbb{R}$ considering the ODE IVP (\ref{eq:ODEIVP}) depending on $C$ we have exactly one of the following two scenarios (as a consequence of Lemma \ref{lem:firstexist} and Theorem \ref{thm:continue}):
\begin{enumerate}
\item There exists a unique smooth solution $v_C := v \left(\cdot; C\right)$ on $\left[1, m + 1\right]$. \label{itm:case1}
\item There exists a unique smooth solution $v_C := v \left(\cdot; C\right)$ with maximal interval of existence $\left[1, \gamma_{\star, C}\right)$ for a unique $\gamma_{\star, C} := \gamma_\star \left(C\right) \in \left(1, m + 1\right]$. \label{itm:case2}
\end{enumerate} \par
\begin{motivation}
In order to prove Theorem \ref{thm:main} we will first show that the set of all $C \in \mathbb{R}$, for which the condition (\ref{itm:case1}) above holds true, is precisely the interval $\left(-\infty, M\right)$ for a unique $M > 2$, and then we will check the limits of $v_C \left(m + 1\right)$ as $C \to -\infty$ and as $C \to M$ respectively to conclude that the range of the function $\left(-\infty, M\right) \to \mathbb{R}$, $C \mapsto v_C \left(m + 1\right)$ is precisely the interval $\left(0, \infty\right)$. For this we will prove some preparatory results in the remainder of Subsection \ref{subsec:Proof1} and in Subsection \ref{subsec:Proof2}.
\end{motivation} \par
Let $\left( \mathcal{C} \left[1, m + 1\right], \lVert \cdot \rVert_\infty \right)$ be the Banach space of all continuous functions on $\left[1, m + 1\right]$. For each $C \in \mathbb{R}$ define $u \left(\cdot; C\right) : \left[1, m + 1\right] \to \mathbb{R}$ as follows:
\begin{enumerate}
\item If the smooth solution $v_C$ to (\ref{eq:ODEIVP}) exists on $\left[1, m + 1\right]$ then $u \left(\gamma; C\right) := v_C \left(\gamma\right)$ for all $\gamma \in \left[1, m + 1\right]$. \label{itm:defu1}
\item If the smooth solution $v_C$ to (\ref{eq:ODEIVP}) has maximal interval of existence $\left[1, \gamma_{\star, C}\right)$ then $u \left(\gamma; C\right) := v_C \left(\gamma\right)$ for all $\gamma \in \left[1, \gamma_{\star, C}\right)$ and $u \left(\gamma; C\right) := 0$ for all $\gamma \in \left[\gamma_{\star, C}, m + 1\right]$. \label{itm:defu2}
\end{enumerate}
By Lemma \ref{lem:positivity}, $u \left(\cdot; C\right)$ is continuous on $\left[1, m + 1\right]$ in the Case (\ref{itm:defu2}) above as well, and hence $u \left(\cdot; C\right) \in \mathcal{C} \left[1, m + 1\right]$ in both the Cases (\ref{itm:defu1}) and (\ref{itm:defu2}) above. Thus we get a function $\Phi : \mathbb{R} \to \mathcal{C} \left[1, m + 1\right]$ defined as $\Phi \left(C\right) := u \left(\cdot; C\right)$ for all $C \in \mathbb{R}$. It can be readily checked from (\ref{eq:ODEIVP}) that $\Phi$ is well-defined and injective. \par
\begin{motivation}
We want to prove that $\Phi$ is continuous and considering the pointwise partial order $\leq$ on $\mathcal{C} \left[1, m + 1\right]$, $\Phi$ is monotone decreasing.
\end{motivation} \par
For a given $C \in \mathbb{R}$ let $\gamma_{0, C} := \gamma_0 \left(C\right)$ be the unique root of the polynomial $p_C \left(\gamma\right) \gamma := p \left(\gamma; C\right) \gamma$ in $\left[1, m + 1\right]$ and similarly let $u_C := u \left(\cdot; C\right)$ on $\left[1, m + 1\right]$. \par
\begin{theorem}\label{thm:UnifConv}
Let $\left(C_\mathrm{n}\right) \to C_0$ and $u_\mathrm{n} := u_{C_\mathrm{n}}$ and $u_0 := u_{C_0}$. Then there exists a subsequence $\left(u_{\mathrm{n}_\mathrm{k}}\right)$ of $\left(u_\mathrm{n}\right)$ such that $\left(u_{\mathrm{n}_\mathrm{k}}\right) \to u_0$ uniformly on $\left[1, m + 1\right]$.
\end{theorem}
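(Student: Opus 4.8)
The plan is to apply the Arzel\`a--Ascoli Theorem to the family $\left\lbrace u_\mathrm{n} \right\rbrace$ and then to identify the resulting uniform limit with $u_0$. First I would set up uniform estimates. Since $\left(C_\mathrm{n}\right) \to C_0$, the set $\left\lbrace C_\mathrm{n} \right\rbrace$ is bounded, so by equations \eqref{eq:ABC} the coefficients $A \left(C_\mathrm{n}\right)$ and $B \left(C_\mathrm{n}\right)$ are bounded too; hence the polynomials $p \left(\gamma; C_\mathrm{n}\right) \gamma$ are bounded on $\left[1, m + 1\right]$ by a single constant $l > 0$ independent of $\mathrm{n}$. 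Feeding this uniform $l$ into the Gr\"onwall argument recalled just before Lemma \ref{lem:continue} gives a single upper bound $K > 0$ with $0 \leq u_\mathrm{n} \leq K$ for all $\mathrm{n}$. Consequently $\left| v_{C_\mathrm{n}}' \right| = \left| 2 \sqrt{2} \sqrt{v_{C_\mathrm{n}}} + p \left(\gamma; C_\mathrm{n}\right) \gamma \right| \leq 2 \sqrt{2} \sqrt{K} + l =: L$ on each interval of existence, and since $u_\mathrm{n}$ is the zero-extension of $v_{C_\mathrm{n}}$ past its breakdown point $\gamma_{\star, C_\mathrm{n}}$ (where $v_{C_\mathrm{n}} \to 0$ by Lemma \ref{lem:positivity}), a short check shows each $u_\mathrm{n}$ is $L$-Lipschitz on all of $\left[1, m + 1\right]$. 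Thus $\left\lbrace u_\mathrm{n} \right\rbrace$ is uniformly bounded and equi-Lipschitz, hence equicontinuous, and Arzel\`a--Ascoli yields a subsequence $\left(u_{\mathrm{n}_\mathrm{k}}\right) \to u_\star$ uniformly on $\left[1, m + 1\right]$ for some $u_\star \in \mathcal{C} \left[1, m + 1\right]$ with $u_\star \geq 0$ and $u_\star \left(1\right) = 2$.

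The substance of the argument is to show $u_\star = u_0$. Let $\rho$ be the first zero of $u_\star$ in $\left[1, m + 1\right]$ if one exists, and $\rho := m + 1$ otherwise; note $\rho > 1$ since $u_\star \left(1\right) = 2$. On any $\left[1, \beta\right]$ with $\beta < \rho$ we have $u_\star \geq \delta > 0$, so $u_{\mathrm{n}_\mathrm{k}} \geq \delta / 2$ there for large $\mathrm{k}$; this forces $\gamma_{\star, C_{\mathrm{n}_\mathrm{k}}} > \beta$, so $u_{\mathrm{n}_\mathrm{k}} = v_{C_{\mathrm{n}_\mathrm{k}}}$ on $\left[1, \beta\right]$ and each $u_{\mathrm{n}_\mathrm{k}}$ satisfies the integral form of \eqref{eq:ODEIVP} there. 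Because $\sqrt{\cdot}$ is Lipschitz on $\left[\delta / 2, K\right]$ and $p \left(\cdot; C_{\mathrm{n}_\mathrm{k}}\right) \to p \left(\cdot; C_0\right)$ uniformly, the integrand converges uniformly and I may pass to the limit, concluding that $u_\star$ is a $\mathcal{C}^1$ solution of \eqref{eq:ODEIVP} on $\left[1, \rho\right)$. The uniqueness part of Lemma \ref{lem:firstexist} then gives $u_\star = v_{C_0}$ on $\left[1, \rho\right)$.

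It remains to match the two scenarios of Subsection \ref{subsec:Proof1}. If $v_{C_0}$ exists on all of $\left[1, m + 1\right]$, then $v_{C_0} > 0$ there by Lemma \ref{lem:positivity}, so $u_\star$ can have no zero (a first zero $\rho$ would force $u_\star \left(\rho\right) = v_{C_0} \left(\rho\right) > 0$), whence $u_\star = v_{C_0} = u_0$ directly. If instead $v_{C_0}$ breaks down at $\gamma_{\star, C_0}$, then $u_\star$ must have a zero (otherwise the previous step would make $v_{C_0}$ global), and comparing $u_\star = v_{C_0}$ on $\left[1, \rho\right)$ with $\lim\limits_{\gamma \to \gamma_{\star, C_0}} v_{C_0} = 0$ from Lemma \ref{lem:positivity} pins down $\rho = \gamma_{\star, C_0}$, so that $u_\star = v_{C_0}$ on $\left[1, \gamma_{\star, C_0}\right)$ and $u_\star \left(\gamma_{\star, C_0}\right) = 0$.

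The main obstacle is then to rule out $u_\star$ becoming positive again beyond $\gamma_{\star, C_0}$, i.e.\ that the zero-extension truly matches $u_\star$. Suppose $\eta \geq \gamma_{\star, C_0}$ is a zero of $u_\star$ with $u_\star > 0$ on $\left(\eta, \beta\right]$. Applying the limiting argument of the second paragraph on $\left(\eta, \beta\right]$ (with base point $\eta$ and $u_\star \left(\eta\right) = 0$) gives $u_\star \left(\gamma\right) = \int_\eta^\gamma \left(2 \sqrt{2} \sqrt{u_\star} + p \left(t; C_0\right) t\right) d t$ there, hence a right derivative $u_\star' \left(\eta^+\right) = p \left(\eta\right) \eta$. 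But $\eta \geq \gamma_{\star, C_0} > \gamma_0$ (using Lemma \ref{lem:firstexist}), so Lemma \ref{lem:polynom} forces $p \left(\eta\right) \eta < 0$, contradicting $u_\star \geq 0 = u_\star \left(\eta\right)$ with $u_\star > 0$ immediately to the right. Therefore $u_\star \equiv 0$ on $\left[\gamma_{\star, C_0}, m + 1\right]$, giving $u_\star = u_0$ in this case as well. In either case $\left(u_{\mathrm{n}_\mathrm{k}}\right) \to u_0$ uniformly, as required; in fact the same reasoning applies to every subsequence, so one even obtains $\left(u_\mathrm{n}\right) \to u_0$ uniformly.
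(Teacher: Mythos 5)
Your proof is correct, but it reaches the conclusion by a genuinely different route from the paper. The opening is common to both: uniform bounds on $p\left(\gamma; C_{\mathrm{n}}\right)\gamma$ from \eqref{eq:ABC}, the Gr\"onwall bound, a uniform Lipschitz constant for the zero-extensions $u_{\mathrm{n}}$, and Arzel\`a--Ascoli. The divergence is in identifying the limit. The paper splits into three cases according to the global behaviour of the whole sequence (all $v_{\mathrm{n}}$ global with positive infimum, all global with vanishing infimum, all breaking down with $\left(\gamma_{\star,\mathrm{n}}\right)$ assumed monotone), and within Cases ($2$) and ($3$) proves auxiliary Claims (a uniform positive lower bound on interior local minima, a uniform upper bound on local maxima via $v_{\mathrm{n}}\left(t\right) = p_{\mathrm{n}}\left(t\right)^2 t^2/8$, and $w\left(\sigma\right)=0$) before matching $w$ with $u_0$. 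You instead work directly with the first zero $\rho$ of the limit $u_\star$: local uniqueness identifies $u_\star$ with $v_{C_0}$ up to $\rho$, Theorem \ref{thm:continue} forces $\rho = \gamma_{\star, C_0}$ in the breakdown case, and the possibility of $u_\star$ reviving past $\gamma_{\star,C_0}$ is excluded by computing the one-sided derivative $p_{C_0}\left(\eta\right)\eta < 0$ at a putative later zero $\eta > \gamma_{0,C_0}$ (Lemmas \ref{lem:polynom} and \ref{lem:firstexist}) and contradicting $u_\star \geq 0$. This is a clean local sign argument that replaces all of the paper's casework, handles mixed sequences (some $v_{\mathrm{n}}$ global, some not) without first passing to a subsequence, and, via the standard sub-subsequence trick you note at the end, already yields convergence of the full sequence (which the paper only obtains later, in Corollary \ref{cor:Phicont}, from monotonicity). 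The one thing the paper's longer route buys is that its Case ($3$) explicitly records the convergence $\left(\gamma_{\star,\mathrm{n}_\mathrm{k}}\right) \to \gamma_{\star,0}$, which is cited again in Theorem \ref{thm:scrC} and Corollary \ref{cor:final}; your argument does give $\liminf \gamma_{\star,\mathrm{n}_\mathrm{k}} \geq \gamma_{\star,0}$ for free, but if you wanted to substitute your proof into the paper wholesale you would need to extract the breakdown-point convergence separately.
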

\begin{proof}
Let $v_\mathrm{n} := v_{C_\mathrm{n}}$ and $v_0 := v_{C_0}$, and $p_\mathrm{n} \left(\gamma\right) \gamma := p_{C_\mathrm{n}} \left(\gamma\right) \gamma$ and $p_0 \left(\gamma\right) \gamma := p_{C_0} \left(\gamma\right) \gamma$. Then from the expressions (\ref{eq:ABC}), $\left(A \left(C_\mathrm{n}\right)\right) \to A \left(C_0\right)$ and $\left(B \left(C_\mathrm{n}\right)\right) \to B \left(C_0\right)$, and hence $\left( p_\mathrm{n} \left(\gamma\right) \gamma \right) \to p_0 \left(\gamma\right) \gamma$ uniformly on $\left[1, m + 1\right]$. Let $\gamma_{0, \mathrm{n}} := \gamma_{0, C_\mathrm{n}}$ and $\gamma_{0, 0} := \gamma_{0, C_0}$ be the roots of $p_\mathrm{n} \left(\gamma\right) \gamma$ and $p_0 \left(\gamma\right) \gamma$ in $\left[1, m + 1\right]$ respectively. Then we can verify that $\gamma_{0, \mathrm{n}} \to \gamma_{0, 0}$. Also note that $u_\mathrm{n} \left(1\right) = v_\mathrm{n} \left(1\right) = 2 = v_0 \left(1\right) = u_0 \left(1\right)$ and $u_\mathrm{n}' \left(1\right) = v_\mathrm{n}' \left(1\right) = 6 = v_0' \left(1\right) = u_0' \left(1\right)$. After this the proof of Theorem \ref{thm:UnifConv} will be divided into the following $3$ Cases:
\begin{case*}($1$) \hspace{2pt}
The solution $v_\mathrm{n}$ exists on $\left[1, m + 1\right]$ for all $\mathrm{n} \in \mathbb{N}$ and $\inf\limits_{\gamma \in \left[1, m + 1\right], \mathrm{n} \in \mathbb{N}} v_\mathrm{n} \left(\gamma\right) = \epsilon > 0$.
\end{case*}
{\noindent Here $u_\mathrm{n} = v_\mathrm{n}$ for all $\mathrm{n} \in \mathbb{N}$. Since $\left( p_\mathrm{n} \left(\gamma\right) \gamma \right)$ is uniformly norm bounded, substituting $\left| p_\mathrm{n} \left(\gamma\right) \gamma \right| \leq l$ (for some $l > 0$) and $\sqrt{v_\mathrm{n}} < v_\mathrm{n} + 1$ for all $\mathrm{n} \in \mathbb{N}$ in the expression for $v_\mathrm{n}' = \left(v_\mathrm{n} + 1\right)'$ in (\ref{eq:ODEIVP}) and using Gr\"onwall's inequality will yield a $K > 0$ such that $\epsilon \leq v_\mathrm{n} \left(\gamma\right) \leq K$ for all $\gamma \in \left[1, m + 1\right]$ and for all $\mathrm{n} \in \mathbb{N}$, thereby proving that $\left(v_\mathrm{n}\right)$ is uniformly norm bounded on $\left[1, m + 1\right]$. Now substituting $v_\mathrm{n} \leq K$ and $\left| p_\mathrm{n} \left(\gamma\right) \gamma \right| \leq l$ in the expression for $v_\mathrm{n}'$ in (\ref{eq:ODEIVP}) will give us that $\left(v_\mathrm{n}'\right)$ is also uniformly norm bounded, implying that $\left(v_\mathrm{n}\right)$ is uniformly equicontinuous on $\left[1, m + 1\right]$. So by Arzel\`a-Ascoli Theorem there exists a subsequence $\left(v_{\mathrm{n}_\mathrm{k}}\right)$ of $\left(v_\mathrm{n}\right)$ converging uniformly on $\left[1, m + 1\right]$ to some $w \in \mathcal{C} \left[1, m + 1\right]$. As $\left( p_\mathrm{n} \left(\gamma\right) \gamma \right)$ and $\left(\sqrt{v_{\mathrm{n}_\mathrm{k}}}\right)$ are uniformly convergent so $\left(v_{\mathrm{n}_\mathrm{k}}'\right)$ is uniformly convergent on $\left[1, m + 1\right]$ (again from (\ref{eq:ODEIVP})). Then by standard Uniform Convergence Theory $w$ is differentiable and satisfies the ODE IVP (\ref{eq:ODEIVP}) for $C = C_0$ on $\left[1, m + 1\right)$. As $\inf\limits_{\gamma \in \left[1, m + 1\right], \mathrm{n} \in \mathbb{N}} v_\mathrm{n} \left(\gamma\right) = \epsilon > 0$ so $\inf\limits_{\gamma \in \left[1, m + 1\right)} w \left(\gamma\right) = \tilde{\epsilon} > 0$ and so by Theorem \ref{thm:continue}, $w$ is differentiable and satisfies (\ref{eq:ODEIVP}) on $\left[1, m + 1\right]$, and hence the solution $v_0$ exists on $\left[1, m + 1\right]$ and by uniqueness, $w = v_0$ and by definition, $u_0 = v_0$ as well. Thus $\left(u_{\mathrm{n}_\mathrm{k}}\right) \to u_0$ uniformly on $\left[1, m + 1\right]$.}
\begin{case*}($2$) \hspace{2pt}
The solution $v_\mathrm{n}$ exists on $\left[1, m + 1\right]$ for all $\mathrm{n} \in \mathbb{N}$ and $\inf\limits_{\gamma \in \left[1, m + 1\right], \mathrm{n} \in \mathbb{N}} v_\mathrm{n} \left(\gamma\right) = 0$.
\end{case*}
{\noindent Here also $u_\mathrm{n} = v_\mathrm{n}$ for all $\mathrm{n} \in \mathbb{N}$. The Gr\"onwall's inequality argument as in Case ($1$) above will prove $0 \leq v_\mathrm{n} \left(\gamma\right) \leq K$ i.e. $\left(v_\mathrm{n}\right)$ is uniformly norm bounded on $\left[1, m + 1\right]$. Once again by substituting this in (\ref{eq:ODEIVP}) with $C = C_\mathrm{n}$, $\left(v_\mathrm{n}'\right)$ will be uniformly norm bounded on $\left[1, m + 1\right]$ and so there will exist a subsequence $\left(v_{\mathrm{n}_\mathrm{k}}\right) \to w \in \mathcal{C} \left[1, m + 1\right]$ uniformly on $\left[1, m + 1\right]$. By the same arguments as in Case ($1$), $\left(v_{\mathrm{n}_\mathrm{k}}'\right)$ is uniformly convergent on $\left[1, m + 1\right]$ and $w$ is differentiable and satisfies the ODE IVP (\ref{eq:ODEIVP}) for $C = C_0$ on $\left[1, m + 1\right)$.
\begin{claim*}
There exists an $\epsilon_0 > 0$ such that for any sequence $\left(t_\mathrm{n}\right)$ where $t_\mathrm{n} \in \left(1, m + 1\right)$ is a local minimum of $v_\mathrm{n}$ we have $v_\mathrm{n} \left(t_\mathrm{n}\right) \geq \epsilon_0$.
\end{claim*}
{\noindent If the above Claim is false then there exists a sequence $\left(t_\mathrm{n}\right)$ of respective local minima of $v_\mathrm{n}$ such that $\lim\limits_{\mathrm{n} \to \infty} v_\mathrm{n} \left(t_\mathrm{n}\right) = 0$. Passing to a subsequence if necessary, let $\left(t_\mathrm{n}\right) \to t_0 \in \left[1, m + 1\right]$. Note also that $v_\mathrm{n}' \left(t_\mathrm{n}\right) = 0$. By the uniform norm boundedness of $\left(v_\mathrm{n}'\right)$ let $R > 0$ be the uniform Lipschitz constant for $\left(v_\mathrm{n}\right)$. For any $\mathrm{n}, \mathrm{j} \in \mathbb{N}$ considering the following estimates:
\begin{align}\label{eq:Case2}
\lvert v_\mathrm{n}\left(t_0\right) \rvert &\leq \lvert v_\mathrm{n}\left(t_0\right) - v_\mathrm{n}\left(t_\mathrm{j}\right) \rvert + \lvert v_\mathrm{n}\left(t_\mathrm{j}\right) - v_\mathrm{n}\left(t_\mathrm{n}\right) \rvert + \lvert v_\mathrm{n}\left(t_\mathrm{n}\right) \rvert \nonumber \\
&\leq \lvert v_\mathrm{n}\left(t_0\right) - v_\mathrm{n}\left(t_\mathrm{j}\right) \rvert + R \lvert t_\mathrm{j} - t_\mathrm{n} \rvert + \lvert v_\mathrm{n}\left(t_\mathrm{n}\right) \rvert
\end{align}
we get $\lim\limits_{\mathrm{n} \to \infty} v_\mathrm{n} \left(t_0\right) = 0$. Now as both $\left(\sqrt{v_\mathrm{n}}\right)$ and $\left( p_\mathrm{n} \left(\gamma\right) \gamma \right)$ are uniformly equicontinuous so $\left(v_\mathrm{n}'\right)$ is also uniformly equicontinuous on $\left[1, m + 1\right]$ (from (\ref{eq:ODEIVP})). On similar lines as the estimates (\ref{eq:Case2}), considering the following estimates:
\begin{align}\label{eq:Case2'}
\lvert v_\mathrm{n}'\left(t_0\right) \rvert &\leq \lvert v_\mathrm{n}'\left(t_0\right) - v_\mathrm{n}'\left(t_\mathrm{j}\right) \rvert + \lvert v_\mathrm{n}'\left(t_\mathrm{j}\right) - v_\mathrm{n}'\left(t_\mathrm{n}\right) \rvert + \lvert v_\mathrm{n}'\left(t_\mathrm{n}\right) \rvert \nonumber \\
&= \lvert v_\mathrm{n}'\left(t_0\right) - v_\mathrm{n}'\left(t_\mathrm{j}\right) \rvert + \lvert v_\mathrm{n}'\left(t_\mathrm{j}\right) - v_\mathrm{n}'\left(t_\mathrm{n}\right) \rvert
\end{align}
we get $\lim\limits_{\mathrm{n} \to \infty} v_\mathrm{n}' \left(t_0\right) = 0$. With both these limits, substituting $\gamma = t_0$ in the ODE in (\ref{eq:ODEIVP}) with $C = C_\mathrm{n}$ and taking limits as $\mathrm{n} \to \infty$ will imply that $t_0$ is a root of $p_0 \left(\gamma\right) \gamma$ and so $t_0 = \gamma_{0,0} \in \left(1, m + 1\right)$. Now as $w$ is a subsequential uniform limit of $\left(v_\mathrm{n}\right)$ and $w$ satisfies the ODE IVP (\ref{eq:ODEIVP}) with $C = C_0$ on $\left[1, m + 1\right)$, we will get $w \left(\gamma_{0,0}\right) = w \left(t_0\right) = 0$ and $w' \left(\gamma_{0,0}\right) = w' \left(t_0\right) = 0$ which contradicts Lemmas \ref{lem:firstexist} and \ref{lem:positivity}. Hence the Claim.} \\
As $\inf\limits_{\gamma \in \left[1, m + 1\right], \mathrm{n} \in \mathbb{N}} v_\mathrm{n} \left(\gamma\right) = 0$, we must have $\inf\limits_{\mathrm{n} \in \mathbb{N}} v_\mathrm{n} \left(m + 1\right) = 0$. As $\left(v_{\mathrm{n}_\mathrm{k}}\right) \to w$ uniformly on $\left[1, m + 1\right]$ so $w \left(m + 1\right) = 0$ and hence by Theorem \ref{thm:continue}, $\left[1, m + 1\right)$ is the maximal interval of existence of $w$ as the smooth solution to (\ref{eq:ODEIVP}) with $C = C_0$, and so $w = v_0 = u_0$ on $\left[1, m + 1\right)$ and $w = u_0$ on $\left[1, m + 1\right]$ by continuity. With this, $\left(u_{\mathrm{n}_\mathrm{k}}\right) \to u_0$ uniformly on $\left[1, m + 1\right]$.}
\begin{case*}($3$) \hspace{2pt}
The solution $v_\mathrm{n}$ has maximal interval of existence $\left[1, \gamma_{\star, \mathrm{n}}\right)$ with $\gamma_{\star, \mathrm{n}} := \gamma_{\star, C_\mathrm{n}} \in \left(1, m + 1\right]$ for all $\mathrm{n} \in \mathbb{N}$. W.l.o.g. $\left(\gamma_{\star, \mathrm{n}}\right)$ is a monotone sequence converging to some $\sigma \in \left[1, m + 1\right]$.
\end{case*}
{\noindent Here $u_\mathrm{n} = v_\mathrm{n}$ on $\left[1, \gamma_{\star, \mathrm{n}}\right)$ and $u_\mathrm{n} = 0$ on $\left[\gamma_{\star, \mathrm{n}}, m + 1\right]$ for all $\mathrm{n} \in \mathbb{N}$. As was noted in the beginning, the polynomials $\left( p_\mathrm{n} \left(\gamma\right) \gamma \right) \to p_0 \left(\gamma\right) \gamma$ uniformly on $\left[1, m + 1\right]$ and their respective roots $\gamma_{0, \mathrm{n}} \to \gamma_{0, 0}$. By Lemmas \ref{lem:polynom} and \ref{lem:firstexist}, $1 < \gamma_{0,\mathrm{n}} < \gamma_{\star,\mathrm{n}} \leq m + 1$ for all $n \in \mathbb{N}$ and taking limits as $\mathrm{n} \to \infty$ we get $1 < \gamma_{0,0} \leq \sigma \leq m + 1$ ($\gamma_{0,0} > 1$ by Lemma \ref{lem:polynom}) and specifically $\sigma \in \left(1, m + 1\right]$.
\begin{claim*}
There exists a $\tilde{K} > 0$ such that for any $\mathrm{n} \in \mathbb{N}$ and any local maximum $t \in \left(1, \gamma_{\star, \mathrm{n}}\right)$ of $v_\mathrm{n}$ we have $v_\mathrm{n} \left(t\right) \leq \tilde{K}$.
\end{claim*}
{\noindent For any $\mathrm{n} \in \mathbb{N}$ if $t \in \left(1, \gamma_{\star, \mathrm{n}}\right)$ is a local maximum of $v_\mathrm{n}$ then $v_\mathrm{n}'\left(t\right) = 0$ implies $v_\mathrm{n}\left(t\right) = \frac{p_\mathrm{n}\left(t\right)^2 t^2}{8}$ (from (\ref{eq:ODEIVP}) with $C = C_\mathrm{n}$), and as $\left(p_\mathrm{n}\left(\gamma\right)\gamma\right)$ is uniformly norm bounded on $\left[1, m + 1\right]$ in all cases so there exists a $\tilde{K} > 0$ such that $v_\mathrm{n}\left(t\right) = \frac{p_\mathrm{n}\left(t\right)^2 t^2}{8} \leq \tilde{K}$ for all $\mathrm{n} \in \mathbb{N}$.} \\
Looking at the definition of $u_\mathrm{n}$ in this case we will observe for each $\mathrm{n} \in \mathbb{N}$ that $\max\limits_{\gamma \in \left[1, m + 1\right]} u_\mathrm{n} \left(\gamma\right) = \sup\limits_{\gamma \in \left[1, \gamma_{\star, \mathrm{n}}\right)} v_\mathrm{n} \left(\gamma\right) = v_\mathrm{n} \left(t\right) \leq \tilde{K}$ for some local maximum $t \in \left(1, \gamma_{\star, \mathrm{n}}\right)$ of $v_\mathrm{n}$. Note that the supremum of $v_\mathrm{n}$ on $\left[1, \gamma_{\star, \mathrm{n}}\right)$ will be attained at an interior point only (which will then be a local maximum of $v_\mathrm{n}$), because $v_\mathrm{n} \left(1\right) = 2$ and $v_\mathrm{n}' \left(1\right) = 6$ and from Lemma \ref{lem:positivity}, $\lim\limits_{\gamma \to \gamma_{\star, \mathrm{n}}} v_\mathrm{n} \left(\gamma\right) = 0$ and $\lim\limits_{\gamma \to \gamma_{\star, \mathrm{n}}} v_\mathrm{n}' \left(\gamma\right) < 0$. Thus we see $0 \leq u_\mathrm{n} \left(\gamma\right) \leq \tilde{K}$ for all $\gamma \in \left[1, m + 1\right]$ and for all $\mathrm{n} \in \mathbb{N}$ i.e. $\left(u_\mathrm{n}\right)$ is uniformly norm bounded on $\left[1, m + 1\right]$. By substituting the uniform norm bounds on $\left(\sqrt{v_\mathrm{n}}\right)$ and $\left( p_\mathrm{n} \left(\gamma\right) \gamma \right)$ in the expression for $v_\mathrm{n}'$ in (\ref{eq:ODEIVP}) on $\left[1, \gamma_{\star, \mathrm{n}}\right)$ we get an $\tilde{R} > 0$ such that for each $\mathrm{n} \in \mathbb{N}$, $\left| v_\mathrm{n}' \left(\gamma\right) \right| \leq \tilde{R}$ for all $\gamma \in \left[1, \gamma_{\star, \mathrm{n}}\right)$, and so $\left| \lim\limits_{\gamma \to \gamma_{\star, \mathrm{n}}} v_\mathrm{n}' \left(\gamma\right) \right| \leq \tilde{R}$. So by its definition, $\left(u_\mathrm{n}\right)$ is uniformly Lipschitz on $\left[1, m + 1\right]$. So we extract a subsequence $\left(u_{\mathrm{n}_\mathrm{k}}\right) \to w \in \mathcal{C} \left[1, m + 1\right]$ uniformly on $\left[1, m + 1\right]$. As $u_{\mathrm{n}_\mathrm{k}} \geq 0$ so $w \geq 0$.
\begin{claim*}
$w \left(\sigma\right) = 0$.
\end{claim*}
{\noindent Note that $u_\mathrm{n} \left(\gamma_{\star, \mathrm{n}}\right) = 0$ for all $\mathrm{n} \in \mathbb{N}$. Then using the following estimates for $\mathrm{j}, \mathrm{k} \in \mathbb{N}$:
\begin{align}\label{eq:Case3}
\lvert w\left(\sigma\right) \rvert &\leq \lvert w\left(\sigma\right) - u_{\mathrm{n}_\mathrm{k}}\left(\sigma\right) \rvert + \lvert u_{\mathrm{n}_\mathrm{k}}\left(\sigma\right) - u_{\mathrm{n}_\mathrm{k}}\left(\gamma_{\star, \mathrm{n}_\mathrm{j}}\right) \rvert \nonumber \\
&+ \lvert u_{\mathrm{n}_\mathrm{k}}\left(\gamma_{\star, \mathrm{n}_\mathrm{j}}\right) - u_{\mathrm{n}_\mathrm{k}}\left(\gamma_{\star, \mathrm{n}_\mathrm{k}}\right) \rvert + \lvert u_{\mathrm{n}_\mathrm{k}}\left(\gamma_{\star, \mathrm{n}_\mathrm{k}}\right) \rvert \\
&\leq \lvert w\left(\sigma\right) - u_{\mathrm{n}_\mathrm{k}}\left(\sigma\right) \rvert + \lvert u_{\mathrm{n}_\mathrm{k}}\left(\sigma\right) - u_{\mathrm{n}_\mathrm{k}}\left(\gamma_{\star, \mathrm{n}_\mathrm{j}}\right) \rvert + \tilde{R} \lvert \gamma_{\star, \mathrm{n}_\mathrm{j}} - \gamma_{\star, \mathrm{n}_\mathrm{k}} \rvert \nonumber
\end{align}
it can be easily seen that $w \left(\sigma\right) = 0$.} \\
After this the proof of $w = u_0$ in Case ($3$) will depend upon whether $\left(\gamma_{\star, \mathrm{n}}\right)$ is increasing or decreasing.
\begin{case*}($3.1$) \hspace{2pt}
$\left(\gamma_{\star, \mathrm{n}}\right)$ decreases to $\sigma$.
\end{case*}
{\noindent So $\left[1, \sigma\right] = \bigcap\limits_{\mathrm{k} \in \mathbb{N}} \left[1, \gamma_{\star,\mathrm{n}_\mathrm{k}}\right)$ and so each $v_{\mathrm{n}_\mathrm{k}}$ will satisfy the ODE IVP (\ref{eq:ODEIVP}) with $C = C_{\mathrm{n}_\mathrm{k}}$ on $\left[1, \sigma\right]$. But $v_{\mathrm{n}_\mathrm{k}} = u_{\mathrm{n}_\mathrm{k}}$ on $\left[1, \gamma_{\star, \mathrm{n}_\mathrm{k}}\right)$ and $\left(u_{\mathrm{n}_\mathrm{k}}\right) \to w$ uniformly on $\left[1, m + 1\right]$. So $\left( \sqrt{v_{\mathrm{n}_\mathrm{k}}} \right) \to \sqrt{w}$ uniformly on $\left[1, \sigma\right]$ and this will imply (again from (\ref{eq:ODEIVP}) with $C = C_{\mathrm{n}_\mathrm{k}}$) that $\left(v_{\mathrm{n}_\mathrm{k}}'\right)$ is uniformly convergent on $\left[1, \sigma\right]$. Hence $w$ is differentiable and satisfies (\ref{eq:ODEIVP}) for $C = C_0$ on $\left[1, \sigma\right)$. As $w \left(\sigma\right) = 0$, from Lemma \ref{lem:positivity} and Theorem \ref{thm:continue}, $\left[1, \sigma\right)$ is the maximal interval of existence of the solution $v_0$ and $w = v_0$ on $\left[1, \sigma\right)$ and $\sigma = \gamma_{\star,0}$. Now note that $u_{\mathrm{n}_\mathrm{k}} \equiv 0$ on $\left[\gamma_{\star, \mathrm{n}_\mathrm{k}}, m + 1\right]$ for each $\mathrm{k} \in \mathbb{N}$ and so (the uniform limit) $w \equiv 0$ on $\left(\sigma, m + 1\right] = \bigcup\limits_{\mathrm{k} \in \mathbb{N}} \left[\gamma_{\star,\mathrm{n}_\mathrm{k}}, m + 1\right]$, as $\left(\gamma_{\star, \mathrm{n}}\right)$ is decreasing to $\sigma$. So by its definition, $w = u_0$ on $\left[1, m + 1\right]$.}
\begin{case*}($3.2$) \hspace{2pt}
$\left(\gamma_{\star, \mathrm{n}}\right)$ increases to $\sigma$.
\end{case*}
{\noindent So $\left[1, \sigma\right) = \bigcup\limits_{\mathrm{k} \in \mathbb{N}} \left[1, \gamma_{\star,\mathrm{n}_\mathrm{k}}\right)$ and $\left[1, \gamma_{\star,\mathrm{n}_\mathrm{k}}\right) = \bigcap\limits_{\mathrm{j} \geq \mathrm{k}} \left[1, \gamma_{\star,\mathrm{n}_\mathrm{j}}\right)$ for each $\mathrm{k} \in \mathbb{N}$. So for a fixed $\mathrm{k} \in \mathbb{N}$, $v_{\mathrm{n}_\mathrm{j}}$ satisfies the ODE IVP (\ref{eq:ODEIVP}) with $C = C_{\mathrm{n}_\mathrm{j}}$ on $\left[1, \gamma_{\star,\mathrm{n}_\mathrm{k}}\right)$ for all $\mathrm{j} \geq \mathrm{k}$. By using the same set of arguments as in Case ($3.1$) for the sequence $\left(v_{\mathrm{n}_\mathrm{j}}\right)_{\mathrm{j} \geq \mathrm{k}}$ converging uniformly to $w$ on $\left[1, \gamma_{\star,\mathrm{n}_\mathrm{k}}\right)$ we will see that $\left(v_{\mathrm{n}_\mathrm{j}}'\right)_{\mathrm{j} \geq \mathrm{k}}$ is uniformly convergent on $\left[1, \gamma_{\star,\mathrm{n}_\mathrm{k}}\right)$. So $w$ is differentiable and satisfies (\ref{eq:ODEIVP}) for $C = C_0$ on $\left[1, \gamma_{\star,\mathrm{n}_\mathrm{k}}\right)$, and as this holds true for each $\mathrm{k} \in \mathbb{N}$ so $w$ satisfies (\ref{eq:ODEIVP}) for $C = C_0$ on $\left[1, \sigma\right)$. After this, the same arguments as in Case ($3.1$) will give $\sigma = \gamma_{\star,0}$ and $w = v_0$ on $\left[1, \sigma\right)$ and $w \equiv 0$ on $\left[\sigma, m + 1\right]$, thereby giving $w = u_0$ on $\left[1, m + 1\right]$.} \\
In both the Cases ($3.1$) and ($3.2$), $\left(u_{\mathrm{n}_\mathrm{k}}\right) \to u_0$ uniformly on $\left[1, m + 1\right]$ and $\left(\gamma_{\star,\mathrm{n}_\mathrm{k}}\right) \to \gamma_{\star,0}$.} \\
Since our aim was to find only a subsequence of $\left(u_\mathrm{n}\right)$ which is uniformly convergent, the above Cases suffice.
\end{proof} \par
\begin{remark}
The Cases in Theorem \ref{thm:UnifConv} have given us a hint that $\mathbb{R}$ may be expressed as the disjoint set union of the set of all $C \in \mathbb{R}$ for which $v_C$ exists on $\left[1, m + 1\right]$ with the set of all $C \in \mathbb{R}$ for which $v_C$ breaks down at $\gamma_{\star, C}$, and that the first set is an open interval and the second one is a closed interval.
\end{remark}
\subsection{Second Part of the Proof}\label{subsec:Proof2}
We first do some calculations with the polynomial $p_C \left(\gamma\right) \gamma$ on $\left[1, m + 1\right]$ (in the following $3$ results) which will be needed further. The first one is from Pingali \cite{Pingali:2018:heK}. \par
\begin{lemma}[Pingali]\label{lem:polynom1}
For each $C \in \mathbb{R}$ define $P_C \left(\gamma\right) := P \left(\gamma; C\right) := \int\limits_1^\gamma p_C \left(y\right) y d y$, $\gamma \in \left[1, m + 1\right]$. Define $L := L \left(m\right)$ and $N := N \left(m\right)$ as follows:
\begin{equation}\label{eq:LN}
\begin{gathered}
L \left(m\right) := \frac{3}{10} \left(m + 1\right)^2 - \frac{1}{20} \left(m + 1\right)^4 - \frac{1}{4} - \frac{\left(m + 1\right)^4 - 1}{20 m} \left[1 - \frac{1}{\left(m + 1\right)^2}\right] \\
N \left(m\right) := \frac{1}{10} \left(m + 1\right)^4 - \frac{2}{5} \left(m + 1\right)^2 - \frac{1}{2} + \frac{\left(m + 1\right)^4 - 1}{10 m} \left[1 + \frac{1}{\left(m + 1\right)^2}\right]
\end{gathered}
\end{equation}
Then $L < 0$ and $N > 0$, and $P_C \left(m + 1\right) = L C + N$ and $P_C \left(\gamma\right) \geq \min \left\lbrace 0, L C + N \right\rbrace$.
\end{lemma}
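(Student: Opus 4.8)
The plan is to establish the three assertions in turn: the explicit formula $P_C(m+1) = LC + N$, then the signs $L < 0$ and $N > 0$, and finally the lower bound $P_C(\gamma) \ge \min\{0, LC+N\}$. First I would compute $P_C(m+1)$ by direct integration. Since $p_C(y)\,y = \tfrac{A(C)}{3}y^4 + \tfrac{B(C)}{2}y^3 + Cy$, integrating term by term on $[1, m+1]$ gives
\[
P_C(m+1) = \frac{A(C)\bigl((m+1)^5 - 1\bigr)}{15} + \frac{B(C)\bigl((m+1)^4 - 1\bigr)}{8} + \frac{C\bigl((m+1)^2 - 1\bigr)}{2}.
\]
Because $A(C)$ and $B(C)$ are affine in $C$ by (\ref{eq:ABC}), the right-hand side is affine in $C$; collecting the coefficient of $C$ produces $L$ and the constant term produces $N$, and a routine (if lengthy) simplification identifies these with the closed forms in (\ref{eq:LN}). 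Note that $P_C(1) = 0$ is immediate from the definition.

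The crux is the sign analysis. Here I would substitute $t := m + 1 > 1$ and use $(m+1)^4 - 1 = m(m+2)(m^2 + 2m + 2)$ together with $1 - (m+1)^{-2} = m(m+2)(m+1)^{-2}$ and $1 + (m+1)^{-2} = (t^2 + 1)/t^2$ to clear the $1/m$ denominators in (\ref{eq:LN}). After pulling out the common factors contributed by the three purely polynomial terms, both $L$ and $N$ collapse onto a single quartic; explicitly,
\[
L = -\frac{(t-1)^3(t+1)(t^2 + 3t + 1)}{20\,t^2}, \qquad N = \frac{(t^2+1)(t-1)^2(t^2 + 3t + 1)}{10\,t^2},
\]
the decisive point being the factorization $t^4 + t^3 - 4t^2 + t + 1 = (t-1)^2(t^2 + 3t + 1)$, so that $t = 1$ (i.e.\ $m = 0$) is a double root, consistent with $L, N \to 0$ as $m \to 0^+$. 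Since $t^2 + 3t + 1 > 0$ for all $t > 0$, while $t - 1 > 0$, $t + 1 > 0$ and $t^2 + 1 > 0$ for $t > 1$, it follows immediately that $L < 0$ and $N > 0$ for every $m > 0$.

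Finally, for the lower bound I would argue that $P_C$ is unimodal on $[1, m+1]$. Indeed $P_C' = p_C(\gamma)\gamma$, which by Lemma \ref{lem:polynom} is strictly positive on $[1, \gamma_{0,C})$ and strictly negative on $(\gamma_{0,C}, m+1]$, where $\gamma_{0,C}$ is the unique root; hence $P_C$ strictly increases on $[1, \gamma_{0,C}]$ and strictly decreases on $[\gamma_{0,C}, m+1]$. A function that increases and then decreases on a closed interval attains its minimum at an endpoint, so
\[
P_C(\gamma) \ge \min\bigl\{ P_C(1),\, P_C(m+1) \bigr\} = \min\{0,\, LC + N\},
\]
using $P_C(1) = 0$ and the formula from the first step. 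I expect the main obstacle to be precisely the sign analysis of $L$ and $N$: the expressions in (\ref{eq:LN}) are opaque as written, and the whole difficulty is recognizing, after clearing the $1/m$ terms, that they reduce to the same quartic with its double root at $t = 1$. Once that factorization is found, the remaining steps are either a direct integration or the elementary monotonicity argument above.
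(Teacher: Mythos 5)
Your proposal is correct. The paper itself gives no proof of this lemma (it is cited directly from Pingali), so there is nothing to compare against, but each of your three steps checks out: the direct integration $P_C(m+1) = \frac{A(C)((m+1)^5-1)}{15} + \frac{B(C)((m+1)^4-1)}{8} + \frac{C((m+1)^2-1)}{2}$ is affine in $C$ and reproduces the stated $L$ and $N$; your factorizations $20t^2L = -(t-1)^3(t+1)(t^2+3t+1)$ and $10t^2N = (t^2+1)(t-1)^2(t^2+3t+1)$ with $t = m+1$ are correct (both sides expand to $-t^6 - t^5 + 5t^4 - 5t^2 + t + 1$ and $t^6 + t^5 - 3t^4 + 2t^3 - 3t^2 + t + 1$ respectively), giving the signs immediately for $t > 1$; and the unimodality argument via $P_C' = p_C(\gamma)\gamma$ and Lemma \ref{lem:polynom} correctly places the minimum of $P_C$ at an endpoint.
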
 \par
\begin{lemma}\label{lem:derpolynom}
The polynomial $q \left(\gamma\right) := \frac{d}{dC}\left(p_C\left(\gamma\right)\gamma\right)$, $\gamma \in \left[1, m + 1\right]$ is independent of $C$, and further $q \left(\gamma\right) < 0$ for $\gamma \in \left(1, m + 1\right)$ and $q \left(1\right) = q \left(m + 1\right) = 0$.
\end{lemma}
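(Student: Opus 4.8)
The plan is to work directly from the explicit affine formulas (\ref{eq:ABC}) for $A(C)$ and $B(C)$. Since $p_C(\gamma)\gamma = A(C)\frac{\gamma^4}{3} + B(C)\frac{\gamma^3}{2} + C\gamma$, differentiating in $C$ gives $q(\gamma) = A' \frac{\gamma^4}{3} + B' \frac{\gamma^3}{2} + \gamma$, where $A' := \frac{dA}{dC}$ and $B' := \frac{dB}{dC}$ are the slopes of the affine functions $A$ and $B$. Because $A$ and $B$ are affine in $C$, these slopes depend only on $m$, which immediately yields the first assertion that $q$ is independent of $C$. A short computation from (\ref{eq:ABC}) gives $A' = \frac{3}{m}\left[1 - \frac{1}{(m+1)^2}\right] = \frac{3(m+2)}{(m+1)^2} > 0$ and $B' = -2\left[1 + \frac{1}{m} - \frac{1}{m(m+1)^2}\right]$.

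Next I would factor out the zero at $\gamma = 0$, writing $q(\gamma) = \gamma\, r(\gamma)$ with $r(\gamma) := \frac{A'}{3}\gamma^3 + \frac{B'}{2}\gamma^2 + 1$, a cubic with positive leading coefficient. Since $\gamma > 0$ throughout $[1, m+1]$, the sign of $q$ coincides with the sign of $r$, and the two boundary identities $q(1) = q(m+1) = 0$ reduce to $r(1) = r(m+1) = 0$. Both I would verify by direct substitution; the cancellations are clean, e.g. $r(1) = \frac{A'}{3} + \frac{B'}{2} + 1$ collapses to $0$ once the $\frac{1}{m}$ and $\frac{1}{m(m+1)^2}$ contributions are combined, and $r(m+1) = 0$ follows from $\frac{A'}{3}(m+1)^3 = (m+1)(m+2)$ together with $\frac{B'}{2}(m+1)^2 = -(m+1)^2 - (m+2)$.

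The crux is to show $q < 0$ strictly on the open interval $(1, m+1)$, which I would handle by locating the third root of the cubic $r$. Since $r$ already has roots at $1$ and $m+1$, Vieta's formula for the product of its three roots, namely $-\frac{3}{A'} = -\frac{(m+1)^2}{m+2}$, pins the remaining root at $-\frac{m+1}{m+2}$, which is negative for every $m > 0$. Hence $r(\gamma) = \frac{A'}{3}\left(\gamma + \frac{m+1}{m+2}\right)(\gamma - 1)\bigl(\gamma - (m+1)\bigr)$ with $\frac{A'}{3} > 0$. On $(1, m+1)$ the first two factors are strictly positive while the last is strictly negative, so $r < 0$ there, and multiplying by $\gamma > 0$ gives $q < 0$, as required.

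I do not anticipate a genuine obstacle: the statement is an elementary polynomial computation. The only point demanding care is the algebraic bookkeeping in verifying the two boundary zeros and in identifying the third root; locating that root, and confirming it lies outside $[1, m+1]$ so that $r$ cannot change sign inside, is the single step where a sign slip would be easy, so I would cross-check it with Vieta's product formula rather than expand $r$ directly.
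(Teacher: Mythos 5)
Your proof is correct and follows essentially the same route as the paper: differentiate the explicit affine formulas (\ref{eq:ABC}) in $C$ to get a quartic independent of $C$, and factor it with roots at $0$, $1$, $m+1$ and $-\frac{m+1}{m+2}$. The only cosmetic difference is that you fix the sign on $\left(1, m+1\right)$ by reading off the signs of the factors (after locating the third root via Vieta), whereas the paper evaluates $q$ at the midpoint $\frac{m+2}{2}$; both steps are sound.
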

\begin{proof}
\begin{equation}\label{eq:derAB}
\frac{d}{dC} \left(A\left(C\right)\right) = \frac{3 \left(m + 2\right)}{\left(m + 1\right)^2} \hspace{2pt}, \hspace{5pt} \frac{d}{dC} \left(B\left(C\right)\right) = - \frac{2 \left(m^2 + 3 m + 3\right)}{\left(m + 1\right)^2}
\end{equation}
\begin{align}\label{eq:qfactor}
\frac{d}{dC} \left(p_C\left(\gamma\right)\gamma\right) &= \frac{m + 2}{\left(m + 1\right)^2} \gamma^4 - \frac{m^2 + 3 m + 3}{\left(m + 1\right)^2} \gamma^3 + \gamma \nonumber \\
&= \left(\gamma - \left( - \frac{m + 1}{m + 2}\right)\right) \left(\gamma - 0\right) \left(\gamma - 1\right) \left(\gamma - \left(m + 1\right)\right)
\end{align}
So $q \left(\gamma\right) = \frac{d}{dC}\left(p_C\left(\gamma\right)\gamma\right)$ is independent of $C$, has its roots at $1$ and $m + 1$, and does not change its sign in $\left(1, m + 1\right)$. Evaluating $q \left(\gamma\right)$ at $\gamma = \frac{m + 2}{2} \in \left(1, m + 1\right)$:
\begin{align}\label{eq:qevalue}
q \left(\frac{m + 2}{2}\right) &= \frac{\left(m + 2\right)^5}{16 \left(m + 1\right)^2} - \frac{\left(m^2 + 3 m + 3\right)\left(m + 2\right)^3}{8 \left(m + 1\right)^2} + \frac{m + 2}{2} \nonumber \\
&= - \frac{m^2 \left(m + 2\right)\left(m^2 + 6 m +6\right)}{16 \left(m + 1\right)^2} \\
&< 0 \nonumber
\end{align}
So for any $\gamma \in \left(1, m + 1\right)$, $q \left(\gamma\right) < 0$ and $q \left(1\right) = q \left(m + 1\right) = 0$.
\end{proof} \par
\begin{corollary}\label{cor:derpolynom}
Define $Q \left(\gamma\right) := \int\limits_1^\gamma q \left(y\right) d y$, $\gamma \in \left[1, m + 1\right]$. Then $Q \left(\gamma\right) < 0$ for all $\gamma \in \left(1, m + 1\right]$ and $Q \left(\gamma\right)$ is strictly decreasing on $\left[1, m + 1\right]$.
\end{corollary}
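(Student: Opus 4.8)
The plan is to deduce both assertions directly from Lemma \ref{lem:derpolynom} by invoking the Fundamental Theorem of Calculus. Since $q$ is a polynomial, it is continuous on $\left[1, m + 1\right]$, so the function $Q \left(\gamma\right) = \int_1^\gamma q \left(y\right) d y$ is well-defined and $\mathcal{C}^1$ on $\left[1, m + 1\right]$ with $Q' \left(\gamma\right) = q \left(\gamma\right)$. This reduces everything to the sign information about $q$ already recorded in Lemma \ref{lem:derpolynom}.

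First I would establish strict monotonicity. For any $s, t$ with $1 \leq s < t \leq m + 1$, I would write $Q \left(t\right) - Q \left(s\right) = \int_s^t q \left(y\right) d y$. By Lemma \ref{lem:derpolynom} we have $q \left(y\right) < 0$ for every $y \in \left(1, m + 1\right)$, and the open interval $\left(s, t\right)$ meets $\left(1, m + 1\right)$ in a subinterval of positive length on which the continuous function $q$ is strictly negative; hence the integral is strictly negative. This yields $Q \left(t\right) < Q \left(s\right)$, so $Q$ is strictly decreasing on $\left[1, m + 1\right]$. The sign claim is then immediate: $Q \left(1\right) = \int_1^1 q \left(y\right) d y = 0$, and strict monotonicity gives $Q \left(\gamma\right) < Q \left(1\right) = 0$ for every $\gamma \in \left(1, m + 1\right]$.

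There is no genuine obstacle here; the result is a routine consequence of Lemma \ref{lem:derpolynom}. The one point deserving a word of care is that $q$ vanishes at the two endpoints $1$ and $m + 1$, so the naive statement ``$Q' < 0$ everywhere'' fails at those two points. I would therefore phrase strict monotonicity through the positive-length subinterval on which $q < 0$ (as above), observing that the two boundary points, being of measure zero, contribute nothing to the integral and hence cannot spoil the strict inequality $Q \left(t\right) - Q \left(s\right) < 0$.
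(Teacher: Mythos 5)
Your proof is correct and follows exactly the route the paper intends: the paper states this corollary without proof as an immediate consequence of Lemma \ref{lem:derpolynom}, relying on precisely the sign information $q < 0$ on $\left(1, m + 1\right)$ and $Q \left(1\right) = 0$ that you use. Your extra remark about the endpoints where $q$ vanishes is a sensible precaution but does not change the substance.
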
 \par
\begin{motivation}
The following calculations and estimates are going to give us that for each $\gamma > 1$, $\frac{d}{dC} \left( v_C\left(\gamma\right) \right) < 0$ on appropriate intervals of $C$ and $\gamma$ i.e. $v \left(\gamma; C\right)$ is strictly decreasing in $C$.
\end{motivation} \par
\begin{theorem}\label{thm:StrctDecr}
Let $\mathsf{V}$ be a non-degenerate interval in $\mathbb{R}$ and $\mathsf{J}$ be a non-degenerate subinterval of $\left[1, m + 1\right]$ containing $1$ such that the smooth solution $v_C$ to the ODE IVP (\ref{eq:ODEIVP}) exists on $\mathsf{J}$ for every $C \in \mathsf{V}$. Then for any $C, D \in \mathsf{V}$ with $C < D$ we have $v\left(\gamma; C\right) \geq v\left(\gamma; D\right) - Q\left(\gamma\right) \left(D - C\right) \geq v\left(\gamma; D\right)$ for all $\gamma \in \mathsf{J}$, with the second inequality being strict if $\gamma > 1$.
\end{theorem}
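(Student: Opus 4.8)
The plan is to study the difference $w := v(\cdot\,;C) - v(\cdot\,;D)$ of the two solutions, show that it satisfies a linear first-order ODE, and then run an integrating-factor (Gr\"onwall-type) argument on a cleverly shifted version of $w$ to obtain both inequalities at once. Write $v := v_C$ and $\tilde{v} := v_D$. By hypothesis both are smooth on $\mathsf{J}$, and by Lemma \ref{lem:positivity} both are strictly positive there, so that $\sqrt{v} + \sqrt{\tilde{v}} > 0$ on all of $\mathsf{J}$; this positivity is what will make the coefficient below finite.

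First I would record the algebraic consequence of Lemma \ref{lem:derpolynom}: since $q(\gamma) = \frac{d}{dC}\left(p_C(\gamma)\gamma\right)$ is independent of $C$, the quantity $p_C(\gamma)\gamma$ is affine in $C$, whence $p_C(\gamma)\gamma - p_D(\gamma)\gamma = q(\gamma)(C - D)$ pointwise in $\gamma$. Subtracting the two copies of the ODE in (\ref{eq:ODEIVP}) and using $\sqrt{v} - \sqrt{\tilde{v}} = w/(\sqrt{v} + \sqrt{\tilde{v}})$ then yields the linear equation
\begin{equation}
w' = a(\gamma)\, w + q(\gamma)(C - D), \qquad a(\gamma) := \frac{2\sqrt{2}}{\sqrt{v} + \sqrt{\tilde{v}}} > 0,
\end{equation}
with initial value $w(1) = v(1) - \tilde{v}(1) = 0$, where $a$ is continuous and bounded on $\mathsf{J}$ by the positivity just noted.

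The key device is the auxiliary function $z(\gamma) := w(\gamma) + Q(\gamma)(D - C)$. Using $Q' = q$ together with the equation for $w$, a direct computation gives $z' = w' + q(\gamma)(D - C) = a(\gamma)\, w = a(\gamma)\bigl(z - Q(\gamma)(D - C)\bigr)$, i.e.
\begin{equation}
z' = a(\gamma)\, z - a(\gamma)\, Q(\gamma)(D - C), \qquad z(1) = 0,
\end{equation}
the initial condition using $Q(1) = 0$. Since $a > 0$, $D - C > 0$, and $Q(\gamma) \leq 0$ on $\mathsf{J}$ by Corollary \ref{cor:derpolynom}, the forcing term $-a(\gamma)Q(\gamma)(D - C)$ is nonnegative. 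Multiplying by the integrating factor $\mu(\gamma) := \exp\!\bigl(-\int_1^\gamma a(s)\,ds\bigr) > 0$ turns the equation into $(\mu z)' = -\mu\, a\, Q\,(D - C) \geq 0$ with $(\mu z)(1) = 0$, so $\mu z \geq 0$ and hence $z \geq 0$ throughout $\mathsf{J}$. This is precisely the first inequality $v(\gamma;C) \geq v(\gamma;D) - Q(\gamma)(D - C)$. The second inequality is then immediate: as $Q(\gamma) \leq 0$ and $D - C > 0$ we have $-Q(\gamma)(D - C) \geq 0$, and this is strict whenever $\gamma > 1$ because $Q(\gamma) < 0$ there (Corollary \ref{cor:derpolynom}).

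There is no deep analytic difficulty in this argument; the only points needing care are the sign bookkeeping — the signs of $Q$ and $D - C$ must combine so that the forcing term in the $z$-equation is definitely nonnegative — and the appeal to Lemma \ref{lem:positivity} guaranteeing that $a(\gamma)$ is finite. The one genuinely inventive step, and the place I expect to spend the most thought, is the choice of the shift $z = w + Q(\gamma)(D - C)$: it is engineered exactly so that the inhomogeneous term $q(\gamma)(C-D)$ of the $w$-equation cancels, reducing everything to a homogeneous linear differential inequality with zero initial data. As a byproduct, dividing the established inequality by $D - C > 0$ and letting $D \to C^+$ recovers the infinitesimal monotonicity $\frac{d}{dC} v_C(\gamma) \leq Q(\gamma) \leq 0$ anticipated in the motivation preceding the statement.
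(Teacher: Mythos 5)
Your proof is correct, and it takes a recognizably different route from the paper's. The paper differentiates the ODE with respect to the parameter $C$, obtaining the linear equation $\frac{d}{dC}\left(v_C'\left(\gamma\right)\right) = \frac{\sqrt{2}}{\sqrt{v_C}}\frac{d}{dC}\left(v_C\right) + q\left(\gamma\right)$, solves it exactly with the integrating factor $e^{-\int_1^\gamma \sqrt{2}/\sqrt{v\left(y;C\right)}\,dy}$, bounds the resulting integral above by $Q\left(\gamma\right)$ using $q \leq 0$, and then converts the derivative bound $\frac{d}{dC}\left(v_C\left(\gamma\right)\right) \leq Q\left(\gamma\right)$ into the stated inequality via the mean value theorem in $C$. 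You instead work with the finite difference $w = v_C - v_D$ directly: the affine dependence of $p_C\left(\gamma\right)\gamma$ on $C$ (which is exactly the content of Lemma \ref{lem:derpolynom}) gives the exact identity $p_C\left(\gamma\right)\gamma - p_D\left(\gamma\right)\gamma = q\left(\gamma\right)\left(C-D\right)$, and your shift $z = w + Q\left(\gamma\right)\left(D-C\right)$ cancels the inhomogeneity and reduces everything to $\left(\mu z\right)' \geq 0$ with $\left(\mu z\right)\left(1\right) = 0$. What your version buys is that it never needs $C \mapsto v_C\left(\gamma\right)$ to be differentiable — a smooth-dependence-on-parameters fact that the paper's mean value theorem step uses without comment — and it delivers the two-solution inequality in the exact form stated, with the parameter-derivative bound $\frac{d}{dC}v_C\left(\gamma\right) \leq Q\left(\gamma\right)$ recoverable as a limit rather than an input. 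One small caveat: your claim that $a\left(\gamma\right) = 2\sqrt{2}/\left(\sqrt{v}+\sqrt{\tilde{v}}\right)$ is bounded on all of $\mathsf{J}$ does not quite follow from pointwise positivity when $\mathsf{J}$ is half-open (the infimum of $v$ could be $0$ at the open end, by Lemma \ref{lem:positivity}), but this is harmless: the argument only needs $\mu\left(\gamma\right) = \exp\left(-\int_1^\gamma a\right) > 0$ for each fixed $\gamma \in \mathsf{J}$, which holds because $a$ is continuous on the compact subinterval $\left[1,\gamma\right] \subseteq \mathsf{J}$.
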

\begin{proof}
Consider the following operations performed on (\ref{eq:ODEIVP}) with $'$ and $\frac{d}{dC}$ denoting derivatives w.r.t. $\gamma$ and $C$ respectively for $C \in \mathsf{V}$ and $\gamma \in \mathsf{J}$, and use $q \left(\gamma\right)$ and $Q \left(\gamma\right)$ from Lemma \ref{lem:derpolynom} and Corollary \ref{cor:derpolynom} respectively:
\begin{equation}\label{eq:ODEIVPC}
v_C'\left(\gamma\right) = 2 \sqrt{2} \sqrt{v_C\left(\gamma\right)} + p_C\left(\gamma\right)\gamma \hspace{3pt}, \hspace{5pt} v_C\left(1\right) = 2
\end{equation}
\begin{equation}\label{eq:derODEIVPC}
\frac{d}{dC} \left( v_C'\left(\gamma\right) \right) = \frac{\sqrt{2}}{\sqrt{v_C\left(\gamma\right)}} \frac{d}{dC} \left( v_C\left(\gamma\right) \right) + q\left(\gamma\right)
\end{equation}
By Lemma \ref{lem:positivity}, $\sqrt{v_C\left(\gamma\right)} > 0$ for all $\gamma \in \mathsf{J}$. Multiplying by $e^{- \int\limits_1^\gamma \frac{\sqrt{2}}{\sqrt{v\left(y; C\right)}} dy}$ and using the equality of second order mixed partial derivatives:
\begin{equation}\label{eq:intfact}
\left( \frac{d}{dC} \left( v_C\left(\gamma\right) \right) \right)' e^{- \int\limits_1^\gamma \frac{\sqrt{2}}{\sqrt{v\left(y; C\right)}} dy} + \frac{d}{dC} \left( v_C\left(\gamma\right) \right) \left( e^{- \int\limits_1^\gamma \frac{\sqrt{2}}{\sqrt{v\left(y; C\right)}} dy} \right)' = q\left(\gamma\right) e^{- \int\limits_1^\gamma \frac{\sqrt{2}}{\sqrt{v\left(y; C\right)}} dy}
\end{equation}
For $\gamma \in \mathsf{J}$ integrating on $\left[1, \gamma\right]$:
\begin{equation}\label{eq:intODEIVPC}
\frac{d}{dC} \left( v_C\left(\gamma\right) \right) e^{- \int\limits_1^\gamma \frac{\sqrt{2}}{\sqrt{v\left(y; C\right)}} dy} = \int\limits_1^\gamma q\left(y\right) e^{- \int\limits_1^y \frac{\sqrt{2}}{\sqrt{v\left(x; C\right)}} dx} dy
\end{equation}
\begin{align}\label{eq:dersol}
\frac{d}{dC} \left( v_C\left(\gamma\right) \right) &= e^{\int\limits_1^\gamma \frac{\sqrt{2}}{\sqrt{v\left(y; C\right)}} dy} \int\limits_1^\gamma q\left(y\right) e^{- \int\limits_1^y \frac{\sqrt{2}}{\sqrt{v\left(x; C\right)}} dx} dy \nonumber \\
&\leq e^{\int\limits_1^\gamma \frac{\sqrt{2}}{\sqrt{v\left(y; C\right)}} dy} \int\limits_1^\gamma q\left(y\right) e^{- \int\limits_1^\gamma \frac{\sqrt{2}}{\sqrt{v\left(x; C\right)}} dx} dy \\
&= Q\left(\gamma\right) \nonumber
\end{align}
Now for any $C, D \in \mathsf{V}$ with $C < D$ and any $\gamma \in \mathsf{J}$ we have (for some $E \in \left(C, D\right)$):
\begin{equation}\label{eq:LMVT}
v\left(\gamma; D\right) - v\left(\gamma; C\right) = \frac{d}{dC} \left( v_E\left(\gamma\right) \right) \left(D - C\right) \leq Q\left(\gamma\right) \left(D - C\right) \leq 0
\end{equation}
So $v\left(\gamma; C\right) \geq v\left(\gamma; D\right) - Q\left(\gamma\right) \left(D - C\right) \geq v\left(\gamma; D\right)$ for all $\gamma \in \mathsf{J}$ and from Corollary \ref{cor:derpolynom}, the second inequality here is strict if $\gamma > 1$.
\end{proof} \par
Define $\mathscr{C} := \left\lbrace C \in \mathbb{R} \hspace{3pt} \vert \hspace{3pt} \text{$v_C$ exists on $\left[1, m + 1\right]$} \right\rbrace \subseteq \mathbb{R}$. \par
We have the following result of Pingali \cite{Pingali:2018:heK} which along with Lemmas \ref{lem:polynom}, \ref{lem:continue} and \ref{lem:polynom1} was used by them in the proof of Theorem \ref{thm:VPP}.
\begin{lemma}\label{lem:VPP}
$\left(-\infty, 2\right] \subseteq \mathscr{C}$ and $\lim\limits_{C \to -\infty} v\left(m + 1; C\right) = \infty$.
\end{lemma}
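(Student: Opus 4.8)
The plan is to bound $v_C$ from below by discarding the nonnegative square-root term in the ODE and integrating, and then to feed this bound into the continuation dichotomy (Cases \ref{itm:case1}--\ref{itm:case2}) and the criterion of Theorem \ref{thm:continue}. The whole argument rests on a single algebraic inequality about the numbers $L, N$ of Lemma \ref{lem:polynom1}.

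First I would observe that on any interval $\left[1, \tilde{r}\right) \subseteq \left[1, m + 1\right]$ on which the smooth solution $v_C$ exists, the term $2\sqrt{2}\sqrt{v_C}$ is nonnegative, so the ODE in (\ref{eq:ODEIVP}) gives the pointwise differential inequality $v_C'\left(\gamma\right) \geq p_C\left(\gamma\right)\gamma$. Integrating from $1$ and using $v_C\left(1\right) = 2$ yields
\[
v_C\left(\gamma\right) \geq 2 + \int_1^\gamma p_C\left(y\right) y \, dy = 2 + P_C\left(\gamma\right) \qquad \text{for all } \gamma \in \left[1, \tilde{r}\right),
\]
with $P_C$ as in Lemma \ref{lem:polynom1}. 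Combining this with the bound $P_C\left(\gamma\right) \geq \min\left\lbrace 0, L C + N \right\rbrace$ from that lemma reduces everything to controlling the sign of $L C + N$.

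For the inclusion $\left(-\infty, 2\right] \subseteq \mathscr{C}$, I would use $L < 0$ (Lemma \ref{lem:polynom1}), so that $C \mapsto L C + N$ is strictly decreasing and hence $L C + N \geq 2 L + N$ for every $C \leq 2$. The crux is the single algebraic fact that $2 L + N > 0$: substituting the explicit expressions (\ref{eq:LN}) and writing $a := m + 1$, a direct simplification gives
\[
2 L + N = \frac{\left(a - 1\right)^2 \left(a^2 + 3 a + 1\right)}{5 a^2} = \frac{m^2 \left(m^2 + 5 m + 5\right)}{5 \left(m + 1\right)^2} > 0 \qquad \text{for all } m > 0.
\]
Thus $L C + N > 0$ for $C \leq 2$, so $\min\left\lbrace 0, L C + N \right\rbrace = 0$ and the bound above becomes $v_C\left(\gamma\right) \geq 2$ throughout its interval of existence. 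If $v_C$ did not exist on all of $\left[1, m + 1\right]$, then by the dichotomy its maximal interval would be $\left[1, \gamma_{\star, C}\right)$ and Theorem \ref{thm:continue}(\ref{itm:cont2}) (equivalently Lemma \ref{lem:positivity}(\ref{itm:posit1})) would force $\lim_{\gamma \to \gamma_{\star, C}} v_C\left(\gamma\right) = 0$, contradicting $v_C \geq 2$. Hence $v_C$ exists on all of $\left[1, m + 1\right]$, i.e. $C \in \mathscr{C}$.

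For the limit I would evaluate the same bound at $\gamma = m + 1$, which is now legitimate since $v_C$ exists there for every $C \leq 2$, and use $P_C\left(m + 1\right) = L C + N$ to get $v\left(m + 1; C\right) \geq 2 + L C + N$. Since $L < 0$, we have $L C \to +\infty$ as $C \to -\infty$, whence $v\left(m + 1; C\right) \to \infty$. The only nonroutine step is the verification that $2 L + N > 0$; everything else is the one-line differential inequality together with the already established continuation theory. I therefore expect the factorization of the explicit quartic $a^4 + a^3 - 4 a^2 + a + 1 = \left(a - 1\right)^2 \left(a^2 + 3 a + 1\right)$ — rather than any analytic difficulty — to be the main point requiring care.
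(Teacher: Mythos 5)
Your proposal is correct and follows essentially the same route as the paper: drop the nonnegative square-root term, integrate to get $v_C(\gamma) \geq 2 + P_C(\gamma)$, invoke Lemma \ref{lem:polynom1} to reduce everything to the sign of $LC+N$, and then use the continuation criterion. The only difference is that you make explicit the algebraic verification $2L+N = \frac{m^2(m^2+5m+5)}{5(m+1)^2} > 0$ (which checks out), where the paper merely asserts that ``it can be checked that if $C \leq 2$ then $LC+N>0$.''
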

\begin{proof}
We just mention the highlights of the proof of Lemma \ref{lem:VPP} and the detailed calculations are given in \cite{Pingali:2018:heK}. For any $C \leq 2$ by Lemma \ref{lem:firstexist}, the solution $v_C$ to (\ref{eq:ODEIVP}) a priori exists on some $\left[1, \tilde{\gamma}\right)$ with $\tilde{\gamma} > 1$. By using $L$ and $N$ of Lemma \ref{lem:polynom1}, it can be checked that if $C \leq 2$ then $L C + N > 0$. Integrating (\ref{eq:ODEIVP}) on $\left[1, \gamma\right]$ for $\gamma \in \left[1, \tilde{\gamma}\right)$ and using Lemma \ref{lem:polynom1} will give $v_C \left(\gamma\right) \geq 2 + P_C \left(\gamma\right) \geq 2$, and then by Lemma \ref{lem:continue}, $v_C$ can be continued beyond $\tilde{\gamma}$, and this will be true for every $\tilde{\gamma} > 1$. So $v_C$ exists on $\left[1, m + 1\right]$ if $C \leq 2$ i.e. $\left(-\infty, 2\right] \subseteq \mathscr{C}$. Now as $L < 0$ and $N > 0$ so $\lim\limits_{C \to -\infty} \left(L C + N\right) = \infty$. So $\lim\limits_{C \to -\infty} v\left(m + 1; C\right) \geq 2 + \lim\limits_{C \to -\infty} P_C \left(m + 1\right) = \infty$ (by Lemma \ref{lem:polynom1}). An important point to be noted here is that if $C \leq 2$ then $L C + N > 0$ i.e. $C < -\frac{N}{L}$, and so $-\frac{N}{L} > 2$.
\end{proof} \par
\begin{motivation}
In the remainder of Subsection \ref{subsec:Proof2} we will prove that $\mathscr{C} = \left(-\infty, M\right)$ and $\lim\limits_{C \to M^-} v\left(m + 1; C\right) = 0$ which will give us a $C$ satisfying the required final boundary condition viz. $v \left(m + 1; C\right) = 2 \left(m + 1\right)^2$.
\end{motivation} \par
\begin{theorem}\label{thm:scrC}
We have the following properties of $\mathscr{C}$:
\begin{enumerate}
\item $\mathscr{C}$ is an interval. \label{itm:scrCinterval}
\item $\mathscr{C}$ is open. \label{itm:scrCopen}
\item $\mathscr{C} \subsetneq \mathbb{R}$. \label{itm:scrCprprsbst}
\item There exists a unique $M = M \left(m\right) > 2$ such that $\mathscr{C} = \left(-\infty, M\right)$. \label{itm:scrCfinal}
\end{enumerate}
\end{theorem}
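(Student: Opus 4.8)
The plan is to prove the four properties in the listed order, exploiting the monotone dependence of the solutions of (\ref{eq:ODEIVP}) on $C$ together with the continuation dichotomy of Theorem \ref{thm:continue}. The organizing principle throughout is that the forcing term $p_C\left(\gamma\right)\gamma$ is affine and \emph{decreasing} in $C$: by Lemma \ref{lem:derpolynom} its $C$-derivative $q\left(\gamma\right)$ is independent of $C$ and satisfies $q \leq 0$ on $\left[1, m + 1\right]$, so $p_D\left(\gamma\right)\gamma \geq p_C\left(\gamma\right)\gamma$ for all $\gamma$ whenever $D < C$. This single comparison, fed through the positivity (Lemma \ref{lem:positivity}) and continuation (Theorem \ref{thm:continue}) results, drives the whole argument.

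To prove (\ref{itm:scrCinterval}), since $\left(-\infty, 2\right] \subseteq \mathscr{C}$ is non-empty by Lemma \ref{lem:VPP}, it suffices to show $\mathscr{C}$ is downward closed, i.e. that $C \in \mathscr{C}$ and $D < C$ force $D \in \mathscr{C}$. Here lies the main obstacle: the infinitesimal monotonicity of Theorem \ref{thm:StrctDecr} presupposes that the whole family $\left\lbrace v_E \right\rbrace$ exists simultaneously on the interval in question, whereas a priori we do not know that $v_D$ reaches as far as $v_C$, so a naive appeal is circular. I would sidestep this by a direct comparison on the maximal interval $\left[1, r\right)$ of $v_D$, where both $v_C$ and $v_D$ are defined and strictly positive (Lemma \ref{lem:positivity}). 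Setting $w := v_D - v_C$, we have $w\left(1\right) = 0$, and using $p_D\gamma \geq p_C\gamma$ together with $\sqrt{v_D} - \sqrt{v_C} = w / \left(\sqrt{v_D} + \sqrt{v_C}\right)$,
\[
w' \geq \frac{2 \sqrt{2}}{\sqrt{v_D} + \sqrt{v_C}}\, w .
\]
An integrating-factor (Grönwall) argument then gives $w \geq 0$, i.e. $v_D \geq v_C$ on $\left[1, r\right)$. Since $C \in \mathscr{C}$ yields $\inf_{\left[1, m + 1\right]} v_C = \epsilon_1 > 0$ by Theorem \ref{thm:continue}, we get $\inf_{\left[1, r\right)} v_D \geq \epsilon_1 > 0$, whence Theorem \ref{thm:continue} shows $v_D$ continues past $r$ and in fact persists on all of $\left[1, m + 1\right]$; thus $D \in \mathscr{C}$.

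For (\ref{itm:scrCopen}) I would argue by contradiction using the compactness packaged in Theorem \ref{thm:UnifConv}. If $C \in \mathscr{C}$ were not interior, choose $D_\mathrm{n} \to C$ with $D_\mathrm{n} \notin \mathscr{C}$; by Theorem \ref{thm:UnifConv} a subsequence $u_{D_{\mathrm{n}_\mathrm{k}}} \to u_C$ uniformly on $\left[1, m + 1\right]$. But $D_{\mathrm{n}_\mathrm{k}} \notin \mathscr{C}$ means $v_{D_{\mathrm{n}_\mathrm{k}}}$ breaks down at some point of $\left(1, m + 1\right]$, so by the definition of $u$ we have $u_{D_{\mathrm{n}_\mathrm{k}}}\left(m + 1\right) = 0$; passing to the limit gives $v_C\left(m + 1\right) = u_C\left(m + 1\right) = 0$, contradicting $v_C\left(m + 1\right) > 0$ (Lemma \ref{lem:positivity}). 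Hence $\mathscr{C}$ is open.

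Property (\ref{itm:scrCprprsbst}) supplies the one genuinely quantitative ingredient that bounds $\mathscr{C}$ above. Suppose $\mathscr{C} = \mathbb{R}$; then for every $D > 2$ the entire family $\left\lbrace v_E : E \in \left[2, D\right] \right\rbrace$ exists on $\left[1, m + 1\right]$, so now Theorem \ref{thm:StrctDecr} \emph{does} apply, with $\mathsf{V} = \left[2, D\right]$, $\mathsf{J} = \left[1, m + 1\right]$ and $\gamma = m + 1$, giving $v\left(m + 1; D\right) \leq v\left(m + 1; 2\right) + Q\left(m + 1\right)\left(D - 2\right)$. Since $Q\left(m + 1\right) < 0$ by Corollary \ref{cor:derpolynom}, the right-hand side tends to $-\infty$ as $D \to \infty$, contradicting $v\left(m + 1; D\right) > 0$; hence $\mathscr{C}$ is proper and bounded above. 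Finally (\ref{itm:scrCfinal}) assembles these: $\mathscr{C}$ is a downward-closed (\ref{itm:scrCinterval}), open (\ref{itm:scrCopen}) interval, unbounded below by Lemma \ref{lem:VPP} and bounded above by (\ref{itm:scrCprprsbst}), so $\mathscr{C} = \left(-\infty, M\right)$ with $M := \sup \mathscr{C}$ uniquely determined, and $M > 2$ because $2 \in \left(-\infty, 2\right] \subseteq \mathscr{C}$.
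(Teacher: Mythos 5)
Your proof is correct, and parts (\ref{itm:scrCopen}), (\ref{itm:scrCprprsbst}) and (\ref{itm:scrCfinal}) follow essentially the same route as the paper (Theorem \ref{thm:UnifConv} Case ($3$) for openness, Theorem \ref{thm:StrctDecr} with $Q\left(m+1\right) < 0$ for properness, and assembly for the final claim). Where you genuinely diverge is part (\ref{itm:scrCinterval}). The paper also faces the circularity you identify -- Theorem \ref{thm:StrctDecr} needs simultaneous existence of the family on a common interval -- but resolves it differently: it first proves, via the compactness of Theorem \ref{thm:UnifConv} Case ($3$), a Claim that all $v_C$ with $C \in \left[D_1, D_2\right]$ exist on a common initial interval $\left[1, \tilde{\gamma}\right)$, then applies Theorem \ref{thm:StrctDecr} there to get $v\left(\cdot; C\right) \geq v\left(\cdot; D_2\right) \geq \epsilon$ with $\epsilon$ independent of $C$ and $\tilde{\gamma}$, and bootstraps the continuation to all of $\left[1, m+1\right]$; this yields the betweenness property $\left[D_1, D_2\right] \subseteq \mathscr{C}$. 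You instead prove downward closedness by a direct comparison of the two solutions $v_D$ and $v_C$ on the maximal interval of $v_D$: writing $w = v_D - v_C$, using $q \leq 0$ to control the forcing terms and $\sqrt{v_D} - \sqrt{v_C} = w/\left(\sqrt{v_D} + \sqrt{v_C}\right)$ to get a linear differential inequality, and concluding $w \geq 0$ by an integrating factor. This is more elementary and self-contained: it needs neither the uniform-initial-interval Claim nor the differentiation in $C$ of Theorem \ref{thm:StrctDecr}, and it gives the comparison exactly on the domain where both solutions are known to exist. What the paper's version buys in exchange is reuse of machinery (Theorems \ref{thm:UnifConv} and \ref{thm:StrctDecr}) that it needs anyway for Corollary \ref{cor:StrctDecr}, plus the quantitative refinement $v\left(\gamma; C\right) \geq v\left(\gamma; D\right) - Q\left(\gamma\right)\left(D - C\right)$, which is used later (e.g.\ to show strict monotonicity of $\gamma_{\star, C}$); your soft inequality $v_D \geq v_C$ would not suffice for those later strictness arguments, but it is entirely adequate for the statement at hand.
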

\begin{proof}
$ $ \vspace{5pt} \newline
(\ref{itm:scrCinterval}) Let $D_1, D_2 \in \mathscr{C}$ with $D_1 < D_2$.
\begin{claim*}
There exists a $\tilde{\gamma} \in \left(1, m + 1\right]$ such that $v_C$ exists at least on $\left[1, \tilde{\gamma}\right)$ for all $C \in \left[D_1, D_2\right]$.
\end{claim*}
{\noindent If not true then there exists a sequence $\left(C_\mathrm{n}\right)$ in $\left[D_1, D_2\right]$ such that $v_\mathrm{n}$ exists maximally on $\left[1, \gamma_{\star,\mathrm{n}}\right)$ and $\left(\gamma_{\star,\mathrm{n}}\right) \to 1$. Passing to a subsequence if necessary, assume $\left(C_\mathrm{n}\right) \to C_0 \in \left[D_1, D_2\right]$. By Theorem \ref{thm:UnifConv} Case ($3$), there exists a subsequence $\left(u_{\mathrm{n}_\mathrm{k}}\right) \to u_0$ uniformly on $\left[1, m + 1\right]$ with $\left(\gamma_{\star,\mathrm{n}_\mathrm{k}}\right) \to \gamma_{\star,0}$ where $\left[1, \gamma_{\star,0}\right)$ is the maximal interval of existence of $v_0$. By Lemma \ref{lem:firstexist}, $\gamma_{\star,0} > 1$. So $\left(\gamma_{\star,\mathrm{n}_\mathrm{k}}\right) \to \gamma_{\star,0}$ and $\left(\gamma_{\star,\mathrm{n}}\right) \to 1$, a contradiction. Hence the Claim.} \\
Take any $\tilde{\gamma} > 1$ with the property mentioned in the Claim above. Applying Theorem \ref{thm:StrctDecr} with $\mathsf{V} = \left[D_1, D_2\right]$ and $\mathsf{J} = \left[1, \tilde{\gamma}\right)$ we will get for any $C \in \left[D_1, D_2\right]$ and for all $\gamma \in \left[1, \tilde{\gamma}\right)$, $v\left(\gamma; C\right) \geq v\left(\gamma; D_2\right)$. Since $D_2 \in \mathscr{C}$ so $v_{D_2}$ exists on $\left[1, m + 1\right]$ and by Theorem \ref{thm:continue}, there exists an $\epsilon > 0$ (depending only on $D_2$) such that $v\left(\gamma; D_2\right) \geq \epsilon$ for all $\gamma \in \left[1, m + 1\right]$ and hence in particular for all $\gamma \in \left[1, \tilde{\gamma}\right)$. So $v\left(\gamma; C\right) \geq \epsilon$ for all $\gamma \in \left[1, \tilde{\gamma}\right)$ and so by Theorem \ref{thm:continue}, $v_C$ can be continued beyond $\tilde{\gamma}$ for all $C \in \left[D_1, D_2\right]$. Since this holds true for any $\tilde{\gamma} > 1$ with the property mentioned in the Claim above and the lower bound $\epsilon > 0$ on $v_C$ does not depend on $C \in \left[D_1, D_2\right]$ as well as on $\tilde{\gamma} > 1$ so $v_C$ has to exist on $\left[1, m + 1\right]$ for all $C \in \left[D_1, D_2\right]$ i.e. $\left[D_1, D_2\right] \subseteq \mathscr{C}$. So $\mathscr{C}$ is an interval. \vspace{5pt} \\
(\ref{itm:scrCopen}) Take a sequence $\left(C_\mathrm{n}\right) \to C_0 \in \mathbb{R}$ of points in $\mathscr{C}^c := \mathbb{R} \smallsetminus \mathscr{C}$. Then $v_\mathrm{n}$ has maximal interval of existence $\left[1, \gamma_{\star,\mathrm{n}}\right)$ and we are in Theorem \ref{thm:UnifConv} Case ($3$). So there exists a subsequence $\left(u_{\mathrm{n}_\mathrm{k}}\right) \to u_0$ uniformly on $\left[1, m + 1\right]$ with $\left(\gamma_{\star,\mathrm{n}_\mathrm{k}}\right) \to \gamma_{\star,0}$. So $\left[1, \gamma_{\star,0}\right)$ is the maximal interval of existence of $v_0$ and so by definition, $C_0 \in \mathscr{C}^c$. So $\mathscr{C}$ is open. \vspace{5pt} \\
(\ref{itm:scrCprprsbst}) Let if possible $v_C$ exist on $\left[1, m + 1\right]$ for all $C \in \mathbb{R}$. Taking $\mathsf{V} = \mathbb{R}$ and $\mathsf{J} = \left[1, m + 1\right]$ in Theorem \ref{thm:StrctDecr} we get for a fixed $C_0 \in \mathbb{R}$ and for any $C > C_0$, and with $\gamma = m + 1$, $v\left(m + 1; C\right) \leq v\left(m + 1; C_0\right) + Q\left(m + 1\right) \left(C - C_0\right)$. Taking $C_\mathrm{n} := C_0 + \mathrm{n}$ for $\mathrm{n} \in \mathbb{N}$ we get $v\left(m + 1; C_\mathrm{n}\right) \leq v\left(m + 1; C_0\right) + \mathrm{n} Q\left(m + 1\right)$. Since $v\left(m + 1; C_\mathrm{n}\right), \hspace{2.5pt} v\left(m + 1; C_0\right) > 0$ and $Q\left(m + 1\right) < 0$ (from Lemma \ref{lem:positivity} and Corollary \ref{cor:derpolynom} respectively) we have $\mathrm{n} < \frac{-v\left(m + 1; C_0\right)}{Q\left(m + 1\right)}$ for all $\mathrm{n} \in \mathbb{N}$, a contradiction. So there exists a $C \in \mathbb{R}$ such that $v_C$ has maximal interval of existence $\left[1, \gamma_{\star,C}\right)$ for some $\gamma_{\star,C} \in \left(1, m + 1\right]$ i.e. $C \in \mathscr{C}^c$. So $\mathscr{C} \subsetneq \mathbb{R}$. \vspace{5pt} \\
(\ref{itm:scrCfinal}) From Lemma \ref{lem:VPP} and (\ref{itm:scrCinterval}), (\ref{itm:scrCopen}) and (\ref{itm:scrCprprsbst}), there exists a unique $M = M \left(m\right) > 2$ such that $\mathscr{C} = \left(-\infty, M\right)$.
\end{proof} \par
For each $C \in \mathbb{R}$ let $I_C \subseteq \left[1, m + 1\right]$ denote the maximal interval of existence of the solution $v_C$ to the ODE IVP (\ref{eq:ODEIVP}). Let $\mathscr{P} \left(\left[1, m + 1\right]\right)$ denote the power set of $\left[1, m + 1\right]$. By Lemma \ref{lem:firstexist} and Theorem \ref{thm:continue} we get the set map $\mathbb{R} \to \mathscr{P} \left(\left[1, m + 1\right]\right)$, $C \mapsto I_C$. Then using Theorems \ref{thm:UnifConv}, \ref{thm:StrctDecr} and \ref{thm:scrC} and the definitions of $u_C$, $\Phi$ and $\mathscr{C}$ we get the following $2$ results:
\begin{corollary}\label{cor:StrctDecr}
The set map $\mathbb{R} \to \mathscr{P} \left(\left[1, m + 1\right]\right)$, $C \mapsto I_C$ and $\Phi$ are monotone decreasing:
\begin{enumerate}
\item If $C_1, C_2 \in \mathscr{C}^c$, $C_1 < C_2$ then $\left[C_1, C_2\right] \subseteq \mathscr{C}^c$ and $\gamma_{\star,C_2} < \gamma_{\star,C_1}$ i.e. $I_{C_2} \subsetneq I_{C_1}$. In general if $C_1, C_2 \in \mathbb{R}$, $C_1 < C_2$ then $I_{C_2} \subseteq I_{C_1}$, with the set containment being strict if $C_2 \in \mathscr{C}^c$ and it being set equality otherwise. \label{itm:IC}
\item If $C_1, C_2 \in \mathscr{C}$, $C_1 < C_2$ then $\left[C_1, C_2\right] \subseteq \mathscr{C}$ and $v\left(\gamma; C_2\right) < v\left(\gamma; C_1\right)$ i.e. $u\left(\gamma; C_2\right) < u\left(\gamma; C_1\right)$ for all $\gamma \in \left(1, m + 1\right]$. In general if $C_1, C_2 \in \mathbb{R}$, $C_1 < C_2$ then $u\left(\gamma; C_2\right) \leq u\left(\gamma; C_1\right)$ for all $\gamma \in \left(1, m + 1\right]$, with the inequality being strict if $\gamma \in I_{C_1}$ and it being equality otherwise. \label{itm:uC}
\end{enumerate}
\end{corollary}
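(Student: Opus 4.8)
The plan is to reduce everything to the structural description $\mathscr{C} = \left(-\infty, M\right)$ furnished by Theorem \ref{thm:scrC}, after which the membership assertions are immediate: if $C_1 < C_2$ both lie in $\mathscr{C}^c = \left[M, \infty\right)$ then $\left[C_1, C_2\right] \subseteq \left[M, \infty\right) = \mathscr{C}^c$, and if they both lie in $\mathscr{C} = \left(-\infty, M\right)$ then $\left[C_1, C_2\right] \subseteq \mathscr{C}$, since $\mathscr{C}$ is an interval. This leaves two genuine monotonicity statements: the strict decrease of the break-down abscissa $\gamma_{\star, C}$ on $\mathscr{C}^c$ (the core of (\ref{itm:IC})) and the strict decrease of the values $v \left(\gamma; \cdot\right)$ (part (\ref{itm:uC})). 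The latter, on $\mathscr{C}$ itself, I would obtain directly from Theorem \ref{thm:StrctDecr}: for $C_1 < C_2$ in $\mathscr{C}$ take $\mathsf{V} = \left[C_1, C_2\right] \subseteq \mathscr{C}$ and $\mathsf{J} = \left[1, m + 1\right]$, which is legitimate because every $v_C$ with $C \in \left[C_1, C_2\right]$ exists on all of $\left[1, m + 1\right]$, and conclude $v \left(\gamma; C_2\right) < v \left(\gamma; C_1\right)$ for all $\gamma \in \left(1, m + 1\right]$, where $u \left(\gamma; C_i\right) = v \left(\gamma; C_i\right)$.

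The main work, and the step I expect to be the principal obstacle, is the \emph{strict} inequality $\gamma_{\star, C_2} < \gamma_{\star, C_1}$ for $C_1 < C_2$ in $\mathscr{C}^c$: a bare comparison yields only $v \left(\gamma; C_2\right) \leq v \left(\gamma; C_1\right)$, which is too weak to separate the break-down points. I would introduce $\rho := \inf_{C \in \left[C_1, C_2\right]} \gamma_{\star, C}$ and first show $\rho > 1$ and that $\rho$ is attained, both via Theorem \ref{thm:UnifConv} Case ($3$): a minimizing sequence $C_\mathrm{n} \to C_0 \in \left[C_1, C_2\right]$ has a subsequence along which $\gamma_{\star, \mathrm{n}_\mathrm{k}} \to \gamma_{\star, C_0}$, forcing $\rho = \gamma_{\star, C_0}$, while Lemma \ref{lem:firstexist} gives $\gamma_{\star, C_0} > \gamma_{0, C_0} > 1$. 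Since $\gamma_{\star, C} \geq \rho$ throughout $\left[C_1, C_2\right]$, every such $v_C$ exists on $\left[1, \rho\right)$, so Theorem \ref{thm:StrctDecr} applies with $\mathsf{V} = \left[C_1, C_2\right]$ and $\mathsf{J} = \left[1, \rho\right)$, delivering the quantitative gap controlled by $Q$.

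The decisive use of that gap is twofold. First, were the minimizer $C_0$ to satisfy $C_0 < C_2$, the estimate $v \left(\gamma; C_2\right) \leq v \left(\gamma; C_0\right) + Q \left(\gamma\right) \left(C_2 - C_0\right)$, combined with $v \left(\gamma; C_0\right) \to 0$ as $\gamma \to \rho^-$ and $Q \left(\rho\right) < 0$ (Corollary \ref{cor:derpolynom}), would force $v \left(\gamma; C_2\right) < 0$ near $\rho$, contradicting Lemma \ref{lem:positivity}; hence $\rho = \gamma_{\star, C_2}$, i.e. the larger parameter breaks down first. Second, applying the gap to $C_1 < C_2$ gives $v \left(\gamma; C_1\right) \geq v \left(\gamma; C_2\right) - Q \left(\gamma\right) \left(C_2 - C_1\right)$, and letting $\gamma \to \rho^- = \gamma_{\star, C_2}^-$ yields $\lim_{\gamma \to \rho^-} v \left(\gamma; C_1\right) \geq - Q \left(\rho\right) \left(C_2 - C_1\right) > 0$; the continuation criterion (Theorem \ref{thm:continue}) then extends $v_{C_1}$ past $\rho$, so $\gamma_{\star, C_1} > \rho = \gamma_{\star, C_2}$, which is exactly $I_{C_2} \subsetneq I_{C_1}$.

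With this strictness established, the remaining ``in general'' clauses are routine case analyses against $\mathscr{C} = \left(-\infty, M\right)$. For (\ref{itm:IC}), if $C_2 \in \mathscr{C}$ then $C_1, C_2 \in \mathscr{C}$ and $I_{C_1} = I_{C_2} = \left[1, m + 1\right]$; if $C_2 \in \mathscr{C}^c$ then either $C_1 \in \mathscr{C}$, giving $I_{C_2} = \left[1, \gamma_{\star, C_2}\right) \subsetneq \left[1, m + 1\right] = I_{C_1}$, or $C_1 \in \mathscr{C}^c$, giving strictness from the previous paragraph. For (\ref{itm:uC}), I would split on whether $\gamma \in I_{C_1}$: if $\gamma \notin I_{C_1}$ then $C_1 \in \mathscr{C}^c$ with $\gamma \geq \gamma_{\star, C_1}$, so by the monotonicity of $I$ also $\gamma \notin I_{C_2}$ and $u \left(\gamma; C_2\right) = 0 = u \left(\gamma; C_1\right)$; if $\gamma \in I_{C_1}$ then $u \left(\gamma; C_1\right) = v \left(\gamma; C_1\right) > 0$, and either $\gamma \notin I_{C_2}$ so $u \left(\gamma; C_2\right) = 0 < u \left(\gamma; C_1\right)$, or $\gamma \in I_{C_2} \subseteq I_C$ for all $C \in \left[C_1, C_2\right]$ (by (\ref{itm:IC})), whence Theorem \ref{thm:StrctDecr} with $\mathsf{J} = \left[1, \gamma\right]$ gives $u \left(\gamma; C_2\right) = v \left(\gamma; C_2\right) < v \left(\gamma; C_1\right) = u \left(\gamma; C_1\right)$. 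This yields both the general inequality and the stated condition for strictness.
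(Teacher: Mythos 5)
Your proposal is correct and follows essentially the same route as the paper: reduce the interval statements to $\mathscr{C} = \left(-\infty, M\right)$ from Theorem \ref{thm:scrC}, use Theorem \ref{thm:UnifConv} Case ($3$) to get a uniform positive lower bound on the break-down abscissae over $\left[C_1, C_2\right]$, and exploit the quantitative gap $-Q\left(\gamma\right)\left(C_2 - C_1\right) > 0$ from Theorem \ref{thm:StrctDecr} together with Lemma \ref{lem:positivity} and Theorem \ref{thm:continue} to force both $\gamma_{\star, C_2} = \min$ and the strict inequality $\gamma_{\star, C_2} < \gamma_{\star, C_1}$. The only cosmetic difference is that the paper establishes $I_{C_2} \subseteq I_C$ by an exhaustion-and-continuation argument and then derives $Q\left(\gamma_{\star, C_2}\right) \geq 0$ from the assumption of equality, whereas you read the same comparison inequality in the other direction (a sign contradiction for $v_{C_2}$, then a positive limit for $v_{C_1}$); both are valid.
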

\begin{proof}
$ $ \vspace{5pt} \newline
(\ref{itm:IC}) Given $C_1, C_2 \in \mathscr{C}^c$, $C_1 < C_2$ then as $\mathscr{C}^c = \left[M, \infty\right)$ so clearly $\left[C_1, C_2\right] \subseteq \mathscr{C}^c$. So for every $C \in \left[C_1, C_2\right]$, $I_C = \left[1, \gamma_{\star, C}\right)$.
\begin{claim*}
There exists a $\tilde{\gamma} \in \left(1, m + 1\right]$ such that $v_C$ exists at least on $\left[1, \tilde{\gamma}\right)$ i.e. $\left[1, \tilde{\gamma}\right) \subseteq I_C$ for all $C \in \left[C_1, C_2\right]$.
\end{claim*}
{\noindent The proof of the above Claim is exactly the same as that of the Claim in Theorem \ref{thm:scrC} (\ref{itm:scrCinterval}) with $D_1, D_2$ being replaced by $C_1, C_2$.} \\
Take any $\tilde{\gamma} > 1$ with the property mentioned in the above Claim. Then clearly $\tilde{\gamma} \leq \gamma_{\star, C_2}$. Taking any such $\tilde{\gamma} < \gamma_{\star, C_2}$ and applying Theorem \ref{thm:StrctDecr} with $\mathsf{V} = \left[C_1, C_2\right]$ and $\mathsf{J} = \left[1, \tilde{\gamma}\right)$ we get $v\left(\gamma; C\right) \geq v\left(\gamma; C_2\right)$ for all $\gamma \in \left[1, \tilde{\gamma}\right)$ and for any $C \in \left[C_1, C_2\right]$. Since $\left[1, \tilde{\gamma}\right) \subsetneq I_{C_2}$ so by Lemma \ref{lem:positivity} and Theorem \ref{thm:continue}, there exists an $\epsilon_{\tilde{\gamma}} > 0$ (depending only on $\tilde{\gamma}$ and $C_2$) such that $v\left(\gamma; C_2\right) \geq \epsilon_{\tilde{\gamma}}$ for all $\gamma \in \left[1, \tilde{\gamma}\right)$. So for each $C \in \left[C_1, C_2\right]$, $v\left(\gamma; C\right) \geq \epsilon_{\tilde{\gamma}}$ for all $\gamma \in \left[1, \tilde{\gamma}\right)$ and so by Theorem \ref{thm:continue}, $v_C$ can be continued beyond $\tilde{\gamma}$ for all $C \in \left[C_1, C_2\right]$. Since this holds true for any $1 < \tilde{\gamma} < \gamma_{\star, C_2}$ with the property mentioned in the above Claim and the lower bound $\epsilon_{\tilde{\gamma}} > 0$ on $v_C$ does not depend on $C \in \left[C_1, C_2\right]$ so $v_C$ has to exist on $\left[1, \gamma_{\star, C_2}\right)$ i.e. $I_{C_2} \subseteq I_C$ for all $C \in \left[C_1, C_2\right]$. \\
So in particular $I_{C_2} \subseteq I_{C_1}$. Let if possible $I_{C_1} = I_{C_2}$ i.e. $\gamma_{\star, C_1} = \gamma_{\star, C_2}$. So by Lemma \ref{lem:positivity} we have $\lim\limits_{\gamma \to \gamma_{\star, C_2}} v \left(\gamma; C_2\right) = 0 = \lim\limits_{\gamma \to \gamma_{\star, C_1}} v \left(\gamma; C_1\right)$. Taking $\mathsf{V} = \left[C_1, C_2\right]$ and $\mathsf{J} = \left[1,
\gamma_{\star, C_2}\right)$ in Theorem \ref{thm:StrctDecr} we get $v\left(\gamma; C_1\right) \geq v\left(\gamma; C_2\right) - Q\left(\gamma\right) \left(C_2 - C_1\right)$ for all $\gamma \in \left[1, \gamma_{\star, C_2}\right)$. Applying limits as $\gamma \to \gamma_{\star, C_2}$ we get $Q\left(\gamma_{\star, C_2}\right) \geq 0$ where $\gamma_{\star, C_2} > 1$ which is a contradiction to Corollary \ref{cor:derpolynom}. So $I_{C_2} \subsetneq I_{C_1}$ i.e. $\gamma_{\star,C_2} < \gamma_{\star,C_1}$. \\
If $C_2 \in \mathscr{C}^c$ and $C_1 \in \mathscr{C}$ then $C_1 < C_2$ and $I_{C_2} = \left[1, \gamma_{\star, C_2}\right) \subsetneq \left[1, m + 1\right] = I_{C_1}$. If $C_1, C_2 \in \mathscr{C}$, $C_1 < C_2$ then $I_{C_1} = \left[1, m + 1\right] = I_{C_2}$. Thus the general statement for $C_1, C_2 \in \mathbb{R}$, $C_1 < C_2$ holds true. \vspace{5pt} \\
(\ref{itm:uC}) Given $C_1, C_2 \in \mathscr{C}$, $C_1 < C_2$ then from Theorem \ref{thm:scrC} (\ref{itm:scrCinterval}), $\left[C_1, C_2\right] \subseteq \mathscr{C}$. So for every $C \in \left[C_1, C_2\right]$, $I_C = \left[1, m + 1\right]$ and $u_C = v_C$ on $\left[1, m + 1\right]$. Applying Theorem \ref{thm:StrctDecr} with $\mathsf{V} = \left[C_1, C_2\right]$ and $\mathsf{J} = \left[1, m + 1\right]$ we get $v\left(\gamma; C_2\right) < v\left(\gamma; C_1\right)$ i.e. $u\left(\gamma; C_2\right) < u\left(\gamma; C_1\right)$ for all $\gamma \in \left(1, m + 1\right]$. \\
If $C_1 \in \mathscr{C}$ and $C_2 \in \mathscr{C}^c$ then $C_1 < C_2$ and $u_{C_1} = v_{C_1}$ on $I_{C_1} = \left[1, m + 1\right]$ and $u_{C_2} = v_{C_2}$ on $I_{C_2} = \left[1, \gamma_{\star, C_2}\right)$ and $u_{C_2} \equiv 0$ on $\left[\gamma_{\star, C_2}, m + 1\right]$. From (\ref{itm:IC}), $I_{C_2} \subseteq I_C$ for all $C \in \left[C_1, C_2\right]$ and so by Theorem \ref{thm:StrctDecr} with $\mathsf{V} = \left[C_1, C_2\right]$ and $\mathsf{J} = \left[1, \gamma_{\star, C_2}\right)$ we get $v\left(\gamma; C_2\right) < v\left(\gamma; C_1\right)$ for all $\gamma \in \left[1, \gamma_{\star, C_2}\right)$. On $\left[\gamma_{\star, C_2}, m + 1\right]$ by Lemma \ref{lem:positivity}, $u_{C_2} = 0 < v_{C_1} = u_{C_1}$. \\
If $C_1, C_2 \in \mathscr{C}^c$, $C_1 < C_2$ then by (\ref{itm:IC}), $u_C = v_C$ on $I_C = \left[1, \gamma_{\star, C}\right)$ and $u_C \equiv 0$ on $\left[\gamma_{\star, C}, m + 1\right]$ for all $C \in \left[C_1, C_2\right]$. Also by (\ref{itm:IC}), $I_{C_2} \subseteq I_C$ for all $C \in \left[C_1, C_2\right]$ and so again using Theorem \ref{thm:StrctDecr} with $\mathsf{V} = \left[C_1, C_2\right]$ and $\mathsf{J} = \left[1, \gamma_{\star, C_2}\right)$ we get $v\left(\gamma; C_2\right) < v\left(\gamma; C_1\right)$ for all $\gamma \in \left[1, \gamma_{\star, C_2}\right)$. As $\gamma_{\star, C_2} < \gamma_{\star, C_1}$ (again by (\ref{itm:IC})) so on $\left[\gamma_{\star, C_2}, \gamma_{\star, C_1}\right)$ by Lemma \ref{lem:positivity} and Theorem \ref{thm:continue}, $u_{C_2} = 0 < v_{C_1} = u_{C_1}$. On $\left[\gamma_{\star, C_1}, m + 1\right]$, $u_{C_1} = 0 = u{C_2}$ as $I_{C_2} \subsetneq I_{C_1}$. \\
Thus the general statement for $C_1, C_2 \in \mathbb{R}$, $C_1 < C_2$ holds true.
\end{proof} \par
Because of monotonicity we can get the uniform convergence of the whole sequence $\left(u_\mathrm{n}\right)$ instead of just a subsequence $\left(u_{\mathrm{n}_\mathrm{k}}\right)$ in Theorem \ref{thm:UnifConv}:
\begin{corollary}\label{cor:Phicont}
If $\left(C_\mathrm{n}\right) \uparrow C_0$ then $\left(u_\mathrm{n}\right) \downarrow u_0$ and more generally if $\left(C_\mathrm{n}\right) \to C_0$ then $\left(u_\mathrm{n}\right) \to u_0$ uniformly on $\left[1, m + 1\right]$. Thus $\Phi$ is continuous.
\end{corollary}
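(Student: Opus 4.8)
The plan is to bootstrap from the subsequential uniform convergence already established in Theorem~\ref{thm:UnifConv} up to full-sequence uniform convergence, using the monotonicity recorded in Corollary~\ref{cor:StrctDecr} for the monotone statement and the standard subsequence principle for the general one. All the genuine analytic content has already been extracted, so what remains is essentially bookkeeping.

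First I would treat the monotone situation $\left(C_\mathrm{n}\right) \uparrow C_0$. By Corollary~\ref{cor:StrctDecr}~(\ref{itm:uC}), since $C_1 \leq C_2 \leq \cdots$ with each $C_\mathrm{n} \leq C_0$, we have $u_1 \geq u_2 \geq \cdots \geq u_0$ pointwise on $\left[1, m + 1\right]$; thus $\left(u_\mathrm{n}\right)$ is pointwise decreasing and bounded below by $u_0$, so it converges pointwise to some $u_\star \geq u_0$. Theorem~\ref{thm:UnifConv} furnishes a subsequence $\left(u_{\mathrm{n}_\mathrm{k}}\right)$ converging uniformly, hence pointwise, to $u_0$; comparing pointwise limits forces $u_\star = u_0$, so $u_\mathrm{n} \downarrow u_0$ pointwise. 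To upgrade to uniform convergence I would sandwich: given $\epsilon > 0$, pick $\mathrm{k}$ with $\lVert u_{\mathrm{n}_\mathrm{k}} - u_0 \rVert_\infty < \epsilon$; then for every $\mathrm{n} \geq \mathrm{n}_\mathrm{k}$ monotonicity gives $u_0 \leq u_\mathrm{n} \leq u_{\mathrm{n}_\mathrm{k}}$ pointwise, whence $\lVert u_\mathrm{n} - u_0 \rVert_\infty \leq \lVert u_{\mathrm{n}_\mathrm{k}} - u_0 \rVert_\infty < \epsilon$. (Equivalently, Dini's Theorem applies, as the $u_\mathrm{n}$ and $u_0$ are continuous on the compact interval $\left[1, m + 1\right]$.) The case $\left(C_\mathrm{n}\right) \downarrow C_0$ is symmetric and yields $u_\mathrm{n} \uparrow u_0$ uniformly.

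For an arbitrary convergent sequence $\left(C_\mathrm{n}\right) \to C_0$ I would invoke the subsequence principle. Suppose, for contradiction, that $\left(u_\mathrm{n}\right)$ does not converge uniformly to $u_0$; then there is an $\epsilon > 0$ and a subsequence $\left(C_{\mathrm{n}_\mathrm{j}}\right)$ with $\lVert u_{\mathrm{n}_\mathrm{j}} - u_0 \rVert_\infty \geq \epsilon$ for all $\mathrm{j}$. But $\left(C_{\mathrm{n}_\mathrm{j}}\right) \to C_0$ as well, so Theorem~\ref{thm:UnifConv} yields a further subsequence with $\left(u_{\mathrm{n}_{\mathrm{j}_\mathrm{l}}}\right) \to u_0$ uniformly, contradicting $\lVert u_{\mathrm{n}_{\mathrm{j}_\mathrm{l}}} - u_0 \rVert_\infty \geq \epsilon$. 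Hence $\left(u_\mathrm{n}\right) \to u_0$ uniformly on $\left[1, m + 1\right]$.

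Finally, uniform convergence on $\left[1, m + 1\right]$ is precisely convergence in the sup-norm of the Banach space $\left(\mathcal{C}\left[1, m + 1\right], \lVert \cdot \rVert_\infty\right)$, so the displays above say exactly that $\Phi\left(C_\mathrm{n}\right) \to \Phi\left(C_0\right)$ in $\mathcal{C}\left[1, m + 1\right]$ whenever $C_\mathrm{n} \to C_0$ in $\mathbb{R}$; thus $\Phi$ is sequentially continuous between metric spaces and therefore continuous. The only subtlety is passing from the subsequential convergence of Theorem~\ref{thm:UnifConv} to full-sequence convergence, and I do not expect any real obstacle here: the Arzel\`a--Ascoli compactness and the identification of the limit as $u_0$ were already carried out in Theorem~\ref{thm:UnifConv}, while the monotonicity needed for the first statement is supplied verbatim by Corollary~\ref{cor:StrctDecr}.
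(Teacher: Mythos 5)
Your argument is correct and is exactly the route the paper intends: the paper leaves this corollary unproved except for the remark that monotonicity upgrades the subsequential uniform convergence of Theorem~\ref{thm:UnifConv} to full-sequence convergence, and your write-up (monotone sandwich/Dini for the increasing case, the standard subsequence principle for the general case, then sequential continuity of $\Phi$) supplies precisely those details using Corollary~\ref{cor:StrctDecr}~(2) and Theorem~\ref{thm:UnifConv} as the paper does. No gaps.
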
 \par
We now have the final result of Subsections \ref{subsec:Proof1} and \ref{subsec:Proof2} which will prove Theorem \ref{thm:main} and as a consequence Corollary \ref{cor:main}:
\begin{corollary}\label{cor:final}
$\lim\limits_{C \to M^-} v\left(m + 1; C\right) = 0$. There exists a unique $C = C \left(m\right) \in \left(-\infty, M\right)$ such that $v \left(m + 1; C\right) = 2 \left(m + 1\right)^2$ and the $C$ with this property has to be strictly greater than $2$.
\end{corollary}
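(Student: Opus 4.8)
The plan is to reduce the whole statement to the behaviour of the single real function $g : \left(-\infty, M\right) \to \mathbb{R}$, $g\left(C\right) := v\left(m + 1; C\right)$, recalling from Theorem \ref{thm:scrC} that $\mathscr{C} = \left(-\infty, M\right)$ is exactly the set of $C$ for which $v_C$ survives up to $m + 1$, so that $g\left(C\right) = u\left(m + 1; C\right)$ there. I would first prove $\lim_{C \to M^-} v\left(m + 1; C\right) = 0$: since $M \in \mathscr{C}^c$, the solution $v_M$ breaks down at some $\gamma_{\star, M} \in \left(1, m + 1\right]$, hence $u\left(m + 1; M\right) = 0$ by the definition of $u$. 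Because $\Phi$ is continuous (Corollary \ref{cor:Phicont}) and evaluation at $m + 1$ is a continuous functional on $\left(\mathcal{C}\left[1, m + 1\right], \lVert \cdot \rVert_\infty\right)$, the map $C \mapsto u\left(m + 1; C\right)$ is continuous on $\mathbb{R}$; taking the left-hand limit at $M$ and using $u\left(m + 1; C\right) = v\left(m + 1; C\right)$ for $C < M$ gives the limit.

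For the existence and uniqueness of the root I would observe that $g$ is continuous (the same functional restricted to $C < M$) and strictly decreasing, the latter being Corollary \ref{cor:StrctDecr} (\ref{itm:uC}) applied at $\gamma = m + 1 \in \left(1, m + 1\right]$. Together with $\lim_{C \to -\infty} g\left(C\right) = \infty$ (Lemma \ref{lem:VPP}) and the limit just established, $g$ is a continuous strictly decreasing bijection of $\left(-\infty, M\right)$ onto $\left(0, \infty\right)$; since $2\left(m + 1\right)^2 \in \left(0, \infty\right)$, the Intermediate Value Theorem produces a point where $g = 2\left(m + 1\right)^2$ and strict monotonicity makes it unique, giving the desired $C = C\left(m\right)$. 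Moreover, again by strict monotonicity, $C\left(m\right) > 2$ is equivalent to $g\left(2\right) > 2\left(m + 1\right)^2$, i.e. to $v\left(m + 1; 2\right) > 2\left(m + 1\right)^2$ (note $2 \in \mathscr{C}$ as $M > 2$), so it remains to prove this one inequality.

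This inequality is the crux, and I would prove it by comparing $v_2$ with the explicit function $V\left(\gamma\right) := 2\gamma^2$, which solves the ODE in (\ref{eq:ODEIVP}) with vanishing right-hand polynomial and the same initial value, since $V' = 4\gamma = 2\sqrt{2}\sqrt{V}$ and $V\left(1\right) = 2$. Writing $d := v_2 - V$ and using $\sqrt{v_2} - \sqrt{V} = d/\left(\sqrt{v_2} + \sqrt{V}\right)$ (legitimate as $v_2, V > 0$ on $\left[1, m + 1\right]$ by Lemma \ref{lem:positivity}), subtracting the two ODEs turns the difference into the \emph{linear} equation $d' = \frac{2\sqrt{2}}{\sqrt{v_2} + \sqrt{V}}\,d + p_2\left(\gamma\right)\gamma$ with $d\left(1\right) = 0$. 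Integrating with the factor $\mu\left(\gamma\right) := \exp\left(-\int_1^\gamma \frac{2\sqrt{2}}{\sqrt{v_2} + \sqrt{V}}\,dy\right)$ yields $d\left(m + 1\right) = \mu\left(m + 1\right)^{-1}\int_1^{m + 1}\mu\left(y\right)p_2\left(y\right)y\,dy$, so everything reduces to the sign of that integral.

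The hard part is to show this integral is positive despite $p_2\left(\gamma\right)\gamma$ changing sign, and here I would play the monotonicity of $\mu$ against the sign pattern of $p_2\left(\gamma\right)\gamma$. The factor $\mu$ is positive and strictly decreasing, while by Lemma \ref{lem:polynom} $p_2\left(\gamma\right)\gamma \geq 0$ on $\left[1, \gamma_{0, 2}\right]$ and $\leq 0$ on $\left[\gamma_{0, 2}, m + 1\right]$; hence $\left(\mu\left(y\right) - \mu\left(\gamma_{0, 2}\right)\right)p_2\left(y\right)y \geq 0$ on each subinterval, which gives $\int_1^{m + 1}\mu\left(y\right)p_2\left(y\right)y\,dy \geq \mu\left(\gamma_{0, 2}\right)\int_1^{m + 1}p_2\left(y\right)y\,dy = \mu\left(\gamma_{0, 2}\right)P_2\left(m + 1\right)$. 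Finally $P_2\left(m + 1\right) = 2L + N > 0$ by Lemma \ref{lem:polynom1}, this being exactly the positivity $LC + N > 0$ at $C = 2$ exploited in Lemma \ref{lem:VPP}; since $\mu\left(\gamma_{0, 2}\right) > 0$ we conclude $d\left(m + 1\right) > 0$, i.e. $v\left(m + 1; 2\right) > 2\left(m + 1\right)^2$, completing the proof. The main obstacle is precisely this sign extraction: $v_2\left(m + 1\right)$ has no closed form, so the difficulty lies entirely in showing that the decreasing weight $\mu$ amplifies the early positive contribution of $p_2\left(\gamma\right)\gamma$ and damps its later negative contribution just enough to reduce the question to the already-known sign of the unweighted integral $P_2\left(m + 1\right)$.
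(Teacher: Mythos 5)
Your proposal is correct, and for the first two assertions it uses essentially the paper's ingredients: the limit $\lim_{C \to M^-} v\left(m + 1; C\right) = 0$ is extracted from Corollary \ref{cor:Phicont} (you do it slightly more cleanly, by composing $\Phi$ with the evaluation functional at $m + 1$ and noting $u\left(m + 1; M\right) = 0$ since $M \in \mathscr{C}^c$, whereas the paper reruns the sequential argument of Theorem \ref{thm:UnifConv} and additionally identifies $\gamma_{\star, M} = m + 1$), and existence plus uniqueness follow from Lemma \ref{lem:VPP}, the Intermediate Value Theorem and the strict monotonicity of Corollary \ref{cor:StrctDecr}, exactly as in the paper. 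Where you genuinely diverge is the assertion $C\left(m\right) > 2$. The paper argues by necessity: assuming $v_C\left(m + 1\right) = 2\left(m + 1\right)^2$, it invokes the interior bound $v_C\left(\gamma\right) > 2\gamma^2$ on $\left(1, m + 1\right)$ established in Subsection \ref{subsec:AnalysisODEBVP}, integrates the ODE over $\left[1, m + 1\right]$, and concludes $L C + N < 0$, hence $C > -\frac{N}{L} > 2$. You instead prove the single inequality $v\left(m + 1; 2\right) > 2\left(m + 1\right)^2$ and transfer it via strict monotonicity of $g$: the comparison with the exact solution $V\left(\gamma\right) = 2\gamma^2$ of the homogeneous equation, the linearization of the difference, the integrating factor $\mu$, and the Chebyshev-type rearrangement step $\int_1^{m+1} \mu\, p_2\left(y\right) y\, dy \geq \mu\left(\gamma_{0,2}\right) P_2\left(m + 1\right)$ (valid because $\mu$ is strictly decreasing and $p_2\left(\gamma\right)\gamma$ changes sign exactly once by Lemma \ref{lem:polynom}) are all sound, and the final positivity $2 L + N > 0$ is precisely the fact recorded in the proof of Lemma \ref{lem:VPP}. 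Your route is longer but buys a quantitative strict lower bound on $v\left(m + 1; C\right)$ at $C = 2$ (indeed at every $C \leq 2$) without needing the interior bound $v_C > 2\gamma^2$; the paper's route is shorter because that interior bound was already available for the solution satisfying both boundary conditions.
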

\begin{proof}
By Theorem \ref{thm:scrC}, $\mathscr{C}^c = \left[M, \infty\right)$. Let $\left(C_\mathrm{n}\right) \downarrow M$ be a sequence of points in $\mathscr{C}^c$ then from Corollary \ref{cor:Phicont}, $\left(u_\mathrm{n}\right) \uparrow u_M$ uniformly on $\left[1, m + 1\right]$ and by definition, $I_\mathrm{n} := I_{C_\mathrm{n}} = \left[1, \gamma_{\star, \mathrm{n}}\right)$ and so by Theorem \ref{thm:UnifConv} Case ($3.2$) and Corollaries \ref{cor:StrctDecr} and \ref{cor:Phicont}, $I_M = \left[1, \gamma_{\star, M}\right)$ where $\left(\gamma_{\star, \mathrm{n}}\right) \uparrow \gamma_{\star, M}$ i.e. $\left(I_\mathrm{n}\right)$ is monotone increasing in $\mathscr{P} \left(\left[1, m + 1\right]\right)$ and $I_M = \bigcup\limits_{\mathrm{n} \in \mathbb{N}} I_\mathrm{n}$. \\
Let $\left(E_\mathrm{n}\right) \uparrow M \in \mathscr{C}^c$ be a sequence of points in $\mathscr{C} = \left(-\infty, M\right)$ then $\left(u_{E_\mathrm{n}}\right) \downarrow u_M$ uniformly on $\left[1, m + 1\right]$ and by definition, $I_{E_\mathrm{n}} = \left[1, m + 1\right]$ and $u_{E_\mathrm{n}} = v_{E_\mathrm{n}}$ on $\left[1, m + 1\right]$ for all $\mathrm{n} \in \mathbb{N}$. So we will land up in either of the Cases ($1$) or ($2$) in Theorem \ref{thm:UnifConv}. In Case ($1$), $\inf\limits_{\gamma \in \left[1, m + 1\right], \mathrm{n} \in \mathbb{N}} u_{E_\mathrm{n}} \left(\gamma\right) = \epsilon > 0$ and hence the uniform limit $u_M = v_M \geq \epsilon > 0$ on $\left[1, m + 1\right]$, thereby implying (by Lemma \ref{lem:continue}) that $M \in \mathscr{C}$, a contradiction. So we are in Theorem \ref{thm:UnifConv} Case ($2$) and hence $I_M = \left[1, m + 1\right)$ (i.e. $\gamma_{\star, M} = m + 1$) and $u_M = v_M > 0$ (by Lemma \ref{lem:positivity}) on $\left[1, m + 1\right)$ and $u_M \left(m + 1\right) = 0$. So by pointwise convergence, $\left(u_{E_\mathrm{n}} \left(m + 1\right) \right) \downarrow u_M \left(m + 1\right)$ i.e. $\left(v_{E_\mathrm{n}} \left(m + 1\right)\right) \downarrow 0$ and so we get $\lim\limits_{C \to M^-} v\left(m + 1; C\right) = 0$. \\
From this and Lemma \ref{lem:VPP} there exists a $C = C \left(m\right) \in \mathscr{C} = \left(-\infty, M\right)$ such that $v \left(m + 1; C\right) = 2 \left(m + 1\right)^2$ and by the strictness of the inequalities in Theorem \ref{thm:StrctDecr} and Corollary \ref{cor:StrctDecr}, this $C$ has to be unique. \\
Now to show this $C$ is strictly greater than $2$ we recollect from Subsection \ref{subsec:AnalysisODEBVP} that if there exists a smooth solution $v_C$ to the ODE IVP (\ref{eq:ODEIVP}) satisfying both the boundary conditions viz. $v_C \left(1\right) = 2$ and $v_C \left(m + 1\right) = 2 \left(m + 1\right)^2$ then $v_C \left(\gamma\right) > 2 \gamma^2$ for all $\gamma \in \left(1, m + 1\right)$. Substituting all this and $P_C \left(m + 1\right) = L C + N$ (from Lemma \ref{lem:polynom1}) in (\ref{eq:ODEIVP}) and integrating it over $\left[1, m + 1\right]$ we get $2 \left(m + 1\right)^2 - 2 > 2 \left( \left(m + 1\right)^2 - 1 \right) + L C + N$ which implies $L C + N < 0$ i.e. $C > -\frac{N}{L}$. As was noted in the proof of Lemma \ref{lem:VPP}, $-\frac{N}{L} > 2$. So if $v_C \left(m + 1\right) = 2 \left(m + 1\right)^2$ then $C > 2$.
\end{proof}
\numberwithin{equation}{section}
\numberwithin{definition}{section}
\numberwithin{lemma}{section}
\numberwithin{theorem}{section}
\numberwithin{corollary}{section}
\numberwithin{remark}{section}
\numberwithin{case}{section}
\numberwithin{claim}{section}
\numberwithin{motivation}{section}
\numberwithin{question}{section}
\section{Bando-Futaki Invariants and hcscK Metrics}\label{sec:Bando-Futaki}
Let $\mathrm{M}$ be a compact K\"ahler $n$-manifold. Given a K\"ahler form $\omega$ on $\mathrm{M}$ there exists a $\lambda \in \mathcal{C}^\infty \left(\mathrm{M}, \mathbb{R}\right)$ such that $c_n \left(\omega\right) = \lambda \omega^n \in \Omega^{\left(n,n\right)} \left(\mathrm{M}\right) = \Omega^{2 n} \left(\mathrm{M}\right)$ where $c_j \left(\omega\right)$ is the $j$\textsuperscript{th} Chern form of $\omega$, $\Omega^{\left(i,j\right)} \left(\mathrm{M}\right)$ is the set of all real $\left(i,j\right)$-forms on $\mathrm{M}$ and $\Omega^\mathrm{r} \left(\mathrm{M}\right)$ is the set of all real $\mathrm{r}$-forms on $\mathrm{M}$. By using Hodge Theory (see Aubin \cite{Aubin:1998:NonlinRiemGeom}) we get:
\begin{equation}\label{eq:Hodge1}
c_n \left(\omega\right) - \mathtt{H} c_n \left(\omega\right) = \sqrt{-1} \partial \bar{\partial} \varphi
\end{equation}
where $\mathtt{H}$ denotes harmonic projection and $\varphi \in \Omega^{\left(n-1,n-1\right)} \left(\mathrm{M}\right) \subseteq \Omega^{2 n - 2} \left(\mathrm{M}\right)$. Furthermore $\mathtt{H} c_n \left(\omega\right) = \lambda_0 \omega^n$ for some $\lambda_0 \in \mathbb{R}$. So equation (\ref{eq:Hodge1}) becomes:
\begin{equation}\label{eq:Hodge2}
\sqrt{-1} \partial \bar{\partial} \varphi = \left(\lambda - \lambda_0\right) \omega^n
\end{equation}
The $n$\textsuperscript{th} \textit{Bando-Futaki invariant} for the K\"ahler class $\left[\omega\right]$ on $\mathrm{M}$ is defined as (by Bando \cite{Bando:2006:HarmonObstruct}):
\begin{equation}\label{eq:BFdef}
\mathcal{F}_n \left(Y, \left[\omega\right]\right) := \int\limits_{\mathrm{M}} \mathcal{L}_Y \varphi \wedge \omega \hspace{1pt}, \hspace{5pt} Y \in \mathfrak{h} \left(\mathrm{M}\right)
\end{equation}
where $\mathcal{L}_Y$ denotes Lie derivative w.r.t. $Y$ and $\mathfrak{h} \left(\mathrm{M}\right)$ is the set of all holomorphic vector fields on $\mathrm{M}$. It has been proven by Bando \cite{Bando:2006:HarmonObstruct} that $\mathcal{F}_n$ is a function of the K\"ahler class $\left[\omega\right]$ alone and does not depend on the choice of the K\"ahler metric $\omega$ in $\left[\omega\right]$ and the $\left(n-1,n-1\right)$-form $\varphi$ satisfying equation (\ref{eq:Hodge1}), so that we have $\mathcal{F}_n : \mathfrak{h} \left(\mathrm{M}\right) \times H^{\left(1,1\right)} \left(\mathrm{M}, \mathbb{R}\right)^+ \to \mathbb{R}$. The top Bando-Futaki invariant for a K\"ahler class provides an obstruction to the existence of hcscK metrics in it, as a K\"ahler metric is hcscK if and only if its top Chern form is harmonic (\cite{Bando:2006:HarmonObstruct} and Pingali \cite{Pingali:2018:heK}).
\begin{theorem}[Bando]\label{thm:BFhcscK}
If $\omega$ is hcscK then $\mathcal{F}_n \left(\cdot, \left[\omega\right]\right) \equiv 0$ on $\mathfrak{h} \left(\mathrm{M}\right)$.
\end{theorem}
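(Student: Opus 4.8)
The plan is to exploit the fact, already recorded in the paragraph preceding the statement, that $\mathcal{F}_n(Y, [\omega])$ depends only on the K\"ahler class and is insensitive to the particular $(n-1,n-1)$-form $\varphi$ solving (\ref{eq:Hodge1}). First I would observe that the hcscK hypothesis forces the function $\lambda$ in $c_n(\omega) = \lambda \omega^n$ to be constant, and in fact to coincide with $\lambda_0$. Indeed, $\omega$ being hcscK means precisely that $c_n(\omega)$ is harmonic, so $\mathtt{H} c_n(\omega) = c_n(\omega)$; comparing this with the two identities $c_n(\omega) = \lambda \omega^n$ and $\mathtt{H} c_n(\omega) = \lambda_0 \omega^n$, and using that $\omega^n$ is a nowhere-vanishing volume form on $\mathrm{M}$, yields $\lambda \equiv \lambda_0$ pointwise on $\mathrm{M}$.

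With $\lambda \equiv \lambda_0$, the right-hand side of (\ref{eq:Hodge2}) vanishes identically; equivalently, the left-hand side $c_n(\omega) - \mathtt{H} c_n(\omega)$ of (\ref{eq:Hodge1}) is zero, so that $\varphi = 0$ is an admissible solution of (\ref{eq:Hodge1}). Since $\mathcal{F}_n(Y, [\omega])$ does not depend on which such $\varphi$ is chosen, I would simply evaluate the defining integral (\ref{eq:BFdef}) on this choice, obtaining $\mathcal{F}_n(Y, [\omega]) = \int_{\mathrm{M}} \mathcal{L}_Y 0 \wedge \omega = 0$ for every $Y \in \mathfrak{h}(\mathrm{M})$. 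This gives $\mathcal{F}_n(\cdot, [\omega]) \equiv 0$, as required.

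The only delicate point, and what I would regard as the crux, is the justification that one may legitimately take $\varphi = 0$: this rests entirely on the independence of $\mathcal{F}_n$ from the choice of $\varphi$, which is quoted here as Bando's result. If one preferred to avoid invoking that independence, one could instead compute (\ref{eq:BFdef}) directly, using Cartan's formula $\mathcal{L}_Y = d \iota_Y + \iota_Y d$ and Stokes' theorem on the closed manifold $\mathrm{M}$ (applied repeatedly, exploiting $d\omega = 0$ so that $\mathcal{L}_Y \omega = d \iota_Y \omega$), together with the holomorphicity of $Y$ and the identity $\sqrt{-1} \partial \bar{\partial} \varphi = 0$ furnished by the hcscK hypothesis. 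This is essentially the same manipulation that underlies Bando's independence statement, so presenting the argument via the choice $\varphi = 0$ is by far the more economical route and is the one I would write out.
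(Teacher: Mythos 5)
Your argument is correct. Note, however, that the paper does not actually prove this statement: it is quoted as a result of Bando, with the only supporting remark being the (also quoted) fact that $\mathcal{F}_n$ is independent of the choice of K\"ahler representative of $\left[\omega\right]$ and of the $\left(n-1,n-1\right)$-form $\varphi$ solving (\ref{eq:Hodge1}). Your derivation is a clean way to extract the theorem from exactly that quoted independence: hcscK forces $c_n\left(\omega\right) = \mathtt{H} c_n\left(\omega\right)$, hence $\lambda \equiv \lambda_0$ by nonvanishing of $\omega^n$, hence $\varphi = 0$ is an admissible choice in (\ref{eq:Hodge1}), and the defining integral (\ref{eq:BFdef}) vanishes. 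The one thing to keep in mind is that the entire content is being carried by the independence statement, which is itself the nontrivial part of Bando's theorem; your proof is only as unconditional as that citation. Your suggested alternative is also consistent with what the paper does elsewhere: the computation (\ref{eq:BFcalc}) (Cartan's formula, $d\omega = 0$, and integration by parts on the closed manifold $\mathrm{M}$) reduces $\mathcal{F}_n\left(Y, \left[\omega\right]\right)$ to $- \int_{\mathrm{M}} \mathtt{f} \left(\lambda - \lambda_0\right) \omega^n$, which vanishes identically once $\lambda \equiv \lambda_0$, and that route avoids invoking the independence of $\mathcal{F}_n$ from $\varphi$ altogether (it only uses that some $\varphi$ solving (\ref{eq:Hodge1}) exists, which is Hodge theory). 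Either way the statement follows; there is no gap.
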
 \par
Now by using standard facts about Lie derivatives and harmonicity of differential forms, and equations (\ref{eq:Hodge1}), (\ref{eq:Hodge2}) and (\ref{eq:BFdef}) we do the following computations for the top Bando-Futaki invariant of a general K\"ahler class $\left[\omega\right]$ on $\mathrm{M}$:
\begin{align}\label{eq:BFcalc}
\mathcal{F}_n \left(Y, \left[\omega\right]\right) &= \int\limits_{\mathrm{M}} \mathcal{L}_Y \varphi \wedge \omega \nonumber \\
&= - \int\limits_{\mathrm{M}} \varphi \wedge \mathcal{L}_Y \omega \nonumber \\
&= - \int\limits_{\mathrm{M}} \varphi \wedge \left( d \left(\iota_Y \omega\right) + \iota_Y \left(d \omega\right) \right) && \left(\text{Where $\iota_Y$ denotes interior product w.r.t. $Y$}\right) \nonumber \\
&= - \int\limits_{\mathrm{M}} \varphi \wedge d \left(\iota_Y \omega\right) \\
&= - \int\limits_{\mathrm{M}} \varphi \wedge \sqrt{-1} \partial \bar{\partial} \mathtt{f} && \left(\text{For some $\mathtt{f} \in \mathcal{C}^\infty \left(\mathrm{M}, \mathbb{R}\right)$}\right) \nonumber \\
&= - \int\limits_{\mathrm{M}} \sqrt{-1} \partial \bar{\partial} \varphi \wedge \mathtt{f} \nonumber \\
&= - \int\limits_{\mathrm{M}} \mathtt{f} \left(\lambda - \lambda_0\right) \omega^n \nonumber
\end{align}
Actually we can make one more observation from equation (\ref{eq:Hodge2}) viz. that $\lambda_0 = \frac{\int\limits_{\mathrm{M}} \lambda \omega^n}{\int\limits_{\mathrm{M}} \omega^n}$ is the average value of $\lambda$ on $\mathrm{M}$. We will do one more set of calculations before proceeding to prove our main result of this section: \par
Let $J$ denote the almost complex structure induced by the complex structure on $\mathrm{M}$, $g$ denote the K\"ahler metric associated with the K\"ahler form $\omega$, and $\flat : \mathfrak{X}^{\left(1,0\right)} \left(\mathrm{M}\right) \to \Omega^{\left(0,1\right)} \left(\mathrm{M}\right)$ and $\sharp : \Omega^{\left(0,1\right)} \left(\mathrm{M}\right) \to \mathfrak{X}^{\left(1,0\right)} \left(\mathrm{M}\right)$ be the `musical isomorphisms' induced by $g$, where $\mathfrak{X}^{\left(1,0\right)} \left(\mathrm{M}\right)$ denotes the set of all $\left(1,0\right)$-vector fields on $\mathrm{M}$. For any $Y \in \mathfrak{h} \left(\mathrm{M}\right) \subseteq \mathfrak{X}^{\left(1,0\right)} \left(\mathrm{M}\right)$ we have:
\begin{equation}\label{eq:IntProd}
\sqrt{-1} Y^{\flat} \left(Z\right) = J \left(Y\right)^{\flat} \left(Z\right) = g \left( J \left(Y\right), Z \right) = \omega \left(Y, Z\right) = \iota_Y \omega \left(Z\right)
\end{equation}
for all $Z \in T \mathrm{M}$, the tangent bundle of $\mathrm{M}$, so that $\iota_Y \omega = \sqrt{-1} Y^{\flat}$. Finally we can see that in a K\"ahler class where higher extremal K\"ahler metrics exist, the top Bando-Futaki invariant characterizes the obstruction to the existence of hcscK metrics:
\begin{theorem}\label{thm:BFheK}
Let $\omega$ be a higher extremal K\"ahler metric on $\mathrm{M}$. Then $\omega$ is hcscK if and only if $\mathcal{F}_n \left(\cdot, \left[\omega\right]\right) \equiv 0$ on $\mathfrak{h} \left(\mathrm{M}\right)$. Furthermore let $\omega$ be hcscK. Then every higher extremal K\"ahler metric in the K\"ahler class $\left[\omega\right]$ is hcscK.
\end{theorem}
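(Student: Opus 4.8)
The plan is to first derive the closed form (\ref{eq:BFexpr}) for $\mathcal{F}_n$ evaluated along the distinguished holomorphic vector field $Y := \nabla^{1,0} \lambda$, and then to extract both assertions as immediate consequences. The starting point is the computation (\ref{eq:BFcalc}), which already reduces $\mathcal{F}_n \left(Y, \left[\omega\right]\right)$ to $- \int_{\mathrm{M}} \mathtt{f} \left(\lambda - \lambda_0\right) \omega^n$, where $\mathtt{f}$ is any smooth function with $d \left(\iota_Y \omega\right) = \sqrt{-1} \partial \bar{\partial} \mathtt{f}$. First I would use the higher extremal K\"ahler hypothesis to pin down $\mathtt{f}$: since $\omega$ is higher extremal, $Y = \nabla^{1,0} \lambda = \left(\bar{\partial} \lambda\right)^{\sharp}$ lies in $\mathfrak{h} \left(\mathrm{M}\right)$, and because $\sharp$ and $\flat$ are mutually inverse we have $Y^{\flat} = \bar{\partial} \lambda$. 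Feeding this into (\ref{eq:IntProd}) gives $\iota_Y \omega = \sqrt{-1} Y^{\flat} = \sqrt{-1} \bar{\partial} \lambda$, and applying $d$ together with $\bar{\partial}^2 = 0$ yields $d \left(\iota_Y \omega\right) = \sqrt{-1} \partial \bar{\partial} \lambda$. Hence we may take $\mathtt{f} = \lambda$ in (\ref{eq:BFcalc}). Substituting this and using that $\lambda_0$ is the $\omega^n$-average of $\lambda$, so that $\int_{\mathrm{M}} \left(\lambda - \lambda_0\right) \omega^n = 0$, the cross term drops out and we obtain $\mathcal{F}_n \left(Y, \left[\omega\right]\right) = - \int_{\mathrm{M}} \lambda \left(\lambda - \lambda_0\right) \omega^n = - \int_{\mathrm{M}} \left(\lambda - \lambda_0\right)^2 \omega^n = - \lVert \lambda - \lambda_0 \rVert_{\mathcal{L}^2 \left(\mathrm{M}, \omega\right)}^2$, which is precisely (\ref{eq:BFexpr}).

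With (\ref{eq:BFexpr}) in hand the first assertion is immediate. The forward implication is exactly Bando's Theorem \ref{thm:BFhcscK}. For the converse, I would assume $\mathcal{F}_n \left(\cdot, \left[\omega\right]\right) \equiv 0$ and evaluate at the admissible field $Y = \nabla^{1,0} \lambda$; then (\ref{eq:BFexpr}) forces $\lVert \lambda - \lambda_0 \rVert_{\mathcal{L}^2 \left(\mathrm{M}, \omega\right)}^2 = 0$, so $\lambda \equiv \lambda_0$ is constant, i.e. $\omega$ is hcscK.

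For the second assertion I would exploit that $\mathcal{F}_n$ depends only on the K\"ahler class and not on the chosen representative (Bando). If $\omega$ is hcscK then Theorem \ref{thm:BFhcscK} gives $\mathcal{F}_n \left(\cdot, \left[\omega\right]\right) \equiv 0$; for any higher extremal K\"ahler $\omega'$ with $\left[\omega'\right] = \left[\omega\right]$ we therefore have $\mathcal{F}_n \left(\cdot, \left[\omega'\right]\right) \equiv 0$ as well, and applying the (already proven) first assertion to $\omega'$ shows $\omega'$ is hcscK.

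The only genuinely substantive step is the identification $\mathtt{f} = \lambda$, which is exactly where the higher extremal K\"ahler condition enters, through the chain $Y^{\flat} = \bar{\partial} \lambda \Rightarrow \iota_Y \omega = \sqrt{-1} \bar{\partial} \lambda \Rightarrow d \left(\iota_Y \omega\right) = \sqrt{-1} \partial \bar{\partial} \lambda$. I expect the main care to be needed in correctly tracking the musical-isomorphism conventions of (\ref{eq:IntProd}) and in confirming that the ambiguity in $\mathtt{f}$ (determined only up to a pluriharmonic, hence constant, summand on compact $\mathrm{M}$) does not affect the integral, both of which are resolved by the vanishing of $\int_{\mathrm{M}} \left(\lambda - \lambda_0\right) \omega^n$. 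Everything else is formal bookkeeping with the class-invariance of $\mathcal{F}_n$.
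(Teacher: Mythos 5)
Your proposal is correct and follows essentially the same route as the paper: both identify $Y=\left(\bar{\partial}\lambda\right)^{\sharp}$, compute $\iota_Y\omega=\sqrt{-1}\,\bar{\partial}\lambda$ via (\ref{eq:IntProd}), deduce $\mathcal{L}_Y\omega=\sqrt{-1}\,\partial\bar{\partial}\lambda$ to obtain (\ref{eq:BFexpr}), and then combine this with Bando's Theorem \ref{thm:BFhcscK} and the class-invariance of $\mathcal{F}_n$. The only cosmetic difference is that you take $\mathtt{f}=\lambda$ and kill the cross term using $\int_{\mathrm{M}}\left(\lambda-\lambda_0\right)\omega^n=0$, whereas the paper takes $\mathtt{f}=\lambda-\lambda_0$ directly; these are equivalent since $\mathtt{f}$ is determined only up to a constant.
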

\begin{proof}
Since $\omega$ is higher extremal K\"ahler, $\nabla^{1,0} \lambda = \left(\bar{\partial} \lambda\right)^{\sharp} \in \mathfrak{h} \left(X\right)$. Substituting $Y = \left(\bar{\partial} \lambda\right)^{\sharp}$ in the calculations (\ref{eq:BFcalc}) and (\ref{eq:IntProd}) and using equation (\ref{eq:Hodge2}) we get $\mathtt{f} = \left(\lambda - \lambda_0\right)$ as follows:
\begin{equation}\label{eq:BFcalcfinal1}
\iota_Y \omega = \sqrt{-1} \bar{\partial} \lambda
\end{equation}
\begin{equation}\label{eq:BFcalcfinal2}
\mathcal{L}_Y \omega = d \left(\iota_Y \omega\right) = d \left(\sqrt{-1} \bar{\partial} \lambda\right) = \sqrt{-1} \partial \bar{\partial} \lambda = \sqrt{-1} \partial \bar{\partial} \left(\lambda - \lambda_0\right)
\end{equation}
\begin{align}\label{eq:BFcalcfinal3}
\mathcal{F}_n \left(Y, \left[\omega\right]\right) &= - \int\limits_{\mathrm{M}} \varphi \wedge \mathcal{L}_Y \omega \nonumber \\
&= - \int\limits_{\mathrm{M}} \varphi \wedge \sqrt{-1} \partial \bar{\partial} \left(\lambda - \lambda_0\right) \nonumber \\
&= - \int\limits_{\mathrm{M}} \sqrt{-1} \partial \bar{\partial} \varphi \wedge \left(\lambda - \lambda_0\right) \\
&= - \int\limits_{\mathrm{M}} \left(\lambda - \lambda_0\right)^2 \omega^n \nonumber \\
&=: - \lVert \lambda - \lambda_0 \rVert_{\mathcal{L}^2 \left(\mathrm{M}, \omega\right)}^2 \nonumber
\end{align}
Now if $\omega$ is higher extremal K\"ahler and $\mathcal{F}_n \left(\cdot, \left[\omega\right]\right) \equiv 0$ then $0 = \mathcal{F}_n \left(\left(\bar{\partial} \lambda\right)^{\sharp}, \left[\omega\right]\right) = - \lVert \lambda - \lambda_0 \rVert_{\mathcal{L}^2 \left(\mathrm{M}, \omega\right)}^2$, and hence $\lambda = \lambda_0$ i.e. $\omega$ is hcscK. The converse was already done by Bando \cite{Bando:2006:HarmonObstruct} in Theorem \ref{thm:BFhcscK}. The second assertion of Theorem \ref{thm:BFheK} follows directly from the first assertion.
\end{proof} \par
Now returning back to the example of our surface $X$ we state the following result of Pingali \cite{Pingali:2018:heK} saying that the higher extremal K\"ahler metric on $X$, obtained by the momentum construction method in Subsections \ref{subsec:MomentConstruct} and \ref{subsec:AnalysisODEBVP}, cannot be hcscK:
\begin{theorem}[Pingali]\label{thm:nonhcscK}
For any $m > 0$ if there exists a higher extremal K\"ahler metric $\omega$ in the K\"ahler class $2 \pi \left(\mathsf{C} + m S_\infty\right)$ on $X$ satisfying the ansatz (\ref{eq:ansatz}) and the conditions (\ref{eq:problem1}) with its momentum profile $\phi \left(\gamma\right)$ satisfying the ODE (\ref{eq:ODE}) on $\left[1, m + 1\right]$ with the boundary conditions (\ref{eq:BVP}) and with $\phi \left(\gamma\right) > 0$ on $\left(1, m + 1\right)$, then $\nabla^{1,0} \lambda \neq 0$ i.e. $\omega$ is not hcscK.
\end{theorem}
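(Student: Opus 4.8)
The plan is to argue by contradiction, turning the hcscK condition into the single algebraic condition $A = 0$ and then showing this is incompatible with the positivity of the momentum profile. Because $\omega$ is higher extremal K\"ahler, $\lambda = A\gamma + B$ for constants $A, B$, and by the computation (\ref{eq:gradlambda}) we have $\nabla^{1,0}\lambda = A\, w\,\frac{\partial}{\partial w}$. Hence $\nabla^{1,0}\lambda = 0$ (equivalently $\lambda$ is constant, i.e.\ $\omega$ is hcscK) if and only if $A = 0$. So it suffices to prove $A \neq 0$, and I would assume for contradiction that $A = 0$.

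Next I would pin down the remaining constants from the boundary data. Evaluating the ODE (\ref{eq:ODE}) at the two endpoints and using the boundary conditions (\ref{eq:BVP}) gives $p(1) = 2$ and $p(m+1) = -2$, where $p(\gamma) = \frac{A}{3}\gamma^3 + \frac{B}{2}\gamma^2 + C$. With $A = 0$ these two linear relations determine $\frac{B}{2}\bigl((m+1)^2 - 1\bigr) = -4$ and $C = 2 - \frac{B}{2}$.

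The heart of the argument is to integrate (\ref{eq:ODE}) over $[1, m+1]$. On the left, integration by parts collapses the $\phi$-dependence: using $\phi(1) = \phi(m+1) = 0$ one gets $\int_1^{m+1}\phi\phi'\,d\gamma = 0$ and $\int_1^{m+1}2\gamma\phi'\,d\gamma = -2\int_1^{m+1}\phi\,d\gamma$, so the left side equals $-2\int_1^{m+1}\phi\,d\gamma$, which is strictly negative since $\phi > 0$ on $(1, m+1)$. On the right, the integral $\int_1^{m+1}p(\gamma)\gamma\,d\gamma$ is purely algebraic, and the key computation is that for the $A = 0$ values of $B, C$ it vanishes identically. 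I would verify this by factoring $(m+1)^4 - 1 = \bigl((m+1)^2 - 1\bigr)\bigl((m+1)^2 + 1\bigr)$ and substituting $\frac{B}{2}\bigl((m+1)^2 - 1\bigr) = -4$: the cubic term contributes $-\bigl((m+1)^2 + 1\bigr)$ and the linear term contributes $+\bigl((m+1)^2 + 1\bigr)$, which cancel. Equating the two sides then forces $\int_1^{m+1}\phi\,d\gamma = 0$, contradicting $\phi > 0$ on $(1, m+1)$. Therefore $A \neq 0$, so $\nabla^{1,0}\lambda \neq 0$ and $\omega$ is not hcscK.

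I expect no serious obstacle; the one point that must click is recognizing that the integrated ODE decouples into a geometric quantity $-2\int_1^{m+1}\phi\,d\gamma$ (forced to be negative by positivity of $\phi$) and an algebraic quantity $\int_1^{m+1}p(\gamma)\gamma\,d\gamma$ (forced to vanish exactly when $\lambda$ is constant). Indeed the identity $-2\int_1^{m+1}\phi\,d\gamma = \int_1^{m+1}p(\gamma)\gamma\,d\gamma$ holds for every higher extremal profile, so its right side is always strictly negative, whereas it would be $0$ in the hcscK case; everything else is bookkeeping with the boundary conditions.
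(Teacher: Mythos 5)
Your argument is correct. Note that the paper itself does not prove Theorem \ref{thm:nonhcscK}; it is quoted from Pingali's earlier work, so there is no in-paper proof to compare against. Your reduction is sound: higher extremality forces $\lambda = A\gamma + B$ with $\nabla^{1,0}\lambda = A\, w\,\partial/\partial w$, so hcscK is exactly $A=0$; evaluating the ODE (\ref{eq:ODE}) at the endpoints with (\ref{eq:BVP}) gives $p(1)=2$, $p(m+1)=-2$ (consistent with Lemma \ref{lem:polynom}); and your integration of (\ref{eq:ODE}) over $[1,m+1]$ yields the identity $-2\int_1^{m+1}\phi\,d\gamma = \int_1^{m+1}p(\gamma)\gamma\,d\gamma$, whose left side is strictly negative while the right side vanishes identically under the $A=0$ normalization (your cancellation of the two contributions $\mp\bigl((m+1)^2+1\bigr)$ checks out). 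It is worth observing that this same identity appears in the paper in the $v$-variable: in the proof of Corollary \ref{cor:final} the relation $\int_1^{m+1}p_C(\gamma)\gamma\,d\gamma = LC+N < 0$ is derived from $v>2\gamma^2$, which is precisely your statement that the right side of the integrated ODE is strictly negative for any admissible profile. So your proof is a clean, self-contained route to the non-hcscK conclusion, consistent with the machinery the paper develops elsewhere.
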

\begin{remark}
Note that even though the existence of a higher extremal K\"ahler metric by the momentum construction method in the K\"ahler class $2 \pi \left(\mathsf{C} + m S_\infty\right)$ was not proven by Pingali \cite{Pingali:2018:heK} for a general $m$, but the fact, that such a metric if it exists is not hcscK, was proven by them for a general $m$.
\end{remark} \par
Now just like being a higher extremal K\"ahler metric is a scale-invariant property, in exactly the same way being an hcscK metric is also a scale-invariant property. Indeed from the rescaling arguments in Subsection \ref{subsec:AnalysisODEBVP}, on a compact K\"ahler $n$-manifold $\mathrm{M}$ if $\omega$ satisfying equation (\ref{eq:defheK}) is an hcscK metric then so is $k \omega$ for any $k > 0$, as seen from equation (\ref{eq:rescalheK}). \par
Thus on our surface $X$ for any $k, m > 0$ if $\omega \in 2 \pi \left(\mathsf{C} + m S_\infty\right)$ is higher extremal K\"ahler but not hcscK and satisfies the equations (\ref{eq:problem1}) then $\eta := \frac{k}{2 \pi} \omega \in k \left(\mathsf{C} + m S_\infty\right)$ is also higher extremal K\"ahler but not hcscK and satisfies the equation $c_2 \left(\eta\right) = \frac{\lambda}{2 k^2} \eta^2$. From this and from Theorem \ref{thm:nonhcscK} and Corollaries \ref{cor:main} and \ref{cor:rescalheK} we get the following result:
\begin{corollary}\label{cor:rescalhcscK}
For all $a, b > 0$ there exists a K\"ahler metric $\eta$ on $X$, which is higher extremal K\"ahler but not hcscK, satisfying the following:
\begin{equation}\label{eq:probheKnonhcscK}
\left[\eta\right] = a \mathsf{C} + b S_\infty \hspace{1pt}, \hspace{5pt} c_2 \left(\eta\right) = \frac{\lambda}{2 a^2} \eta^2 \hspace{1pt}, \hspace{5pt} \nabla^{1,0} \lambda \neq 0 \in \mathfrak{h} \left(X\right)
\end{equation}
\end{corollary}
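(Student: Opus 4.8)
The plan is to obtain the statement by combining the existence result of Corollary \ref{cor:main} with Pingali's non-hcscK result (Theorem \ref{thm:nonhcscK}), transported across K\"ahler classes by the rescaling argument recorded just above. Given $a, b > 0$, I would set $m := \frac{b}{a} > 0$ and $k := a > 0$, so that the target class factors as $a \mathsf{C} + b S_\infty = k \left(\mathsf{C} + m S_\infty\right)$.

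First I would invoke Corollary \ref{cor:main} to produce a higher extremal K\"ahler metric $\omega$ on $X$ with $\left[\omega\right] = 2\pi \left(\mathsf{C} + m S_\infty\right)$ and $c_2\left(\omega\right) = \frac{\lambda}{2\left(2\pi\right)^2}\omega^2$, where $\nabla^{1,0}\lambda \in \mathfrak{h}\left(X\right)$. Crucially, the metric $\omega$ furnished by Corollary \ref{cor:main} is the one built by the momentum construction of Subsections \ref{subsec:MomentConstruct} and \ref{subsec:AnalysisODEBVP}, so its momentum profile $\phi\left(\gamma\right)$ solves the ODE (\ref{eq:ODE}) on $\left[1, m+1\right]$ with the boundary conditions (\ref{eq:BVP}) and $\phi > 0$ on $\left(1, m+1\right)$. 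Hence Theorem \ref{thm:nonhcscK} applies verbatim and yields $\nabla^{1,0}\lambda \neq 0$, i.e. $\omega$ is not hcscK.

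Next I would rescale: put $\eta := \frac{k}{2\pi}\omega = \frac{a}{2\pi}\omega$, so that $\left[\eta\right] = k\left(\mathsf{C} + m S_\infty\right) = a\mathsf{C} + b S_\infty$ as required. Since scaling multiplies the Hermitian matrix by $\frac{a}{2\pi}$ but leaves the connection and hence the curvature unchanged, one has $c_2\left(\eta\right) = c_2\left(\omega\right)$; and from $\omega = \frac{2\pi}{a}\eta$ one computes $c_2\left(\eta\right) = c_2\left(\omega\right) = \frac{\lambda}{2\left(2\pi\right)^2}\omega^2 = \frac{\lambda}{2a^2}\eta^2$, which is exactly (\ref{eq:probheKnonhcscK}). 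The same scale invariance (equation (\ref{eq:rescalheK})) shows $\eta$ is higher extremal K\"ahler with the very same $\lambda$, and therefore with the same $\nabla^{1,0}\lambda \neq 0$; and because being hcscK is itself a scale-invariant property, $\eta$ fails to be hcscK precisely because $\omega$ does.

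This is fundamentally a packaging corollary: the genuinely hard analytic work — solving the ODE BVP for the momentum profile for \emph{every} $m > 0$ — has already been carried out in Theorem \ref{thm:main} and Corollary \ref{cor:main}, and the non-hcscK conclusion is Pingali's Theorem \ref{thm:nonhcscK}. The only point that needs care is to apply Theorem \ref{thm:nonhcscK} to the \emph{specific} momentum-constructed representative rather than to an arbitrary higher extremal representative of the class, and to track that the function $\lambda$ (and hence its gradient) is unaltered by the rescaling $\omega \mapsto \frac{a}{2\pi}\omega$. Beyond this bookkeeping I expect no substantive obstacle.
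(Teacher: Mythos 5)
Your proposal is correct and follows essentially the same route as the paper: the paper also obtains this corollary by combining Corollary \ref{cor:main} (the momentum-constructed higher extremal representative in $2\pi\left(\mathsf{C}+mS_\infty\right)$ with $m=\frac{b}{a}$), Theorem \ref{thm:nonhcscK} (that representative is not hcscK), and the scale-invariance of both the higher extremal and hcscK properties via the rescaling $\eta=\frac{k}{2\pi}\omega$ recorded in equation (\ref{eq:rescalheK}) and Corollary \ref{cor:rescalheK}. Your added care in checking that Theorem \ref{thm:nonhcscK} applies to the specific momentum-constructed representative and that $\lambda$ is unchanged under rescaling is exactly the bookkeeping the paper relies on.
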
 \par
We finally conclude the non-existence of any hcscK metrics on $X$ from Theorem \ref{thm:BFheK} and Corollary \ref{cor:rescalhcscK}:
\begin{corollary}\label{cor:nonexisthcscK}
For all $a, b > 0$ there does not exist an hcscK metric in the K\"ahler class $a \mathsf{C} + b S_\infty$ on $X$. Thus there do not exist any hcscK metrics on $X$.
\end{corollary}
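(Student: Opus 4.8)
The plan is to combine the rescaled existence result (Corollary \ref{cor:rescalhcscK}) with the characterization of hcscK metrics among higher extremal K\"ahler metrics (Theorem \ref{thm:BFheK}), and then to invoke the fact that the K\"ahler cone of $X$ is exhausted by the classes $a \mathsf{C} + b S_\infty$. First I would fix $a, b > 0$ and recall from Corollary \ref{cor:rescalhcscK} that the K\"ahler class $a \mathsf{C} + b S_\infty$ contains a higher extremal K\"ahler metric $\eta$ which is \emph{not} hcscK. The essential leverage is the first assertion of Theorem \ref{thm:BFheK}: since $\eta$ is higher extremal K\"ahler, it is hcscK if and only if $\mathcal{F}_2 \left(\cdot, \left[\eta\right]\right) \equiv 0$ on $\mathfrak{h} \left(X\right)$. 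As $\eta$ fails to be hcscK, we conclude that $\mathcal{F}_2 \left(\cdot, a \mathsf{C} + b S_\infty\right) \not\equiv 0$; here I am using crucially that the top Bando-Futaki invariant is a function of the K\"ahler class alone (Bando's result, recalled in Section \ref{sec:Bando-Futaki}), so the nonvanishing detected through $\eta$ is really a property of the class.

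Next I would argue by contradiction. Suppose some hcscK metric $\omega$ exists in the class $a \mathsf{C} + b S_\infty = \left[\omega\right]$. By Theorem \ref{thm:BFhcscK} we would then have $\mathcal{F}_2 \left(\cdot, \left[\omega\right]\right) \equiv 0$, i.e. $\mathcal{F}_2 \left(\cdot, a \mathsf{C} + b S_\infty\right) \equiv 0$, contradicting the previous paragraph. Equivalently, one may phrase the contradiction directly through the second assertion of Theorem \ref{thm:BFheK}: the existence of an hcscK metric in $\left[\omega\right]$ forces \emph{every} higher extremal K\"ahler representative of that class, in particular $\eta$, to be hcscK, which is impossible. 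Either way, no hcscK metric can exist in $a \mathsf{C} + b S_\infty$.

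Finally I would appeal to the description of the K\"ahler cone in equation (\ref{eq:KConeX}): every K\"ahler class on $X$ has the form $a \mathsf{C} + b S_\infty$ with $a, b > 0$. Since none of these classes admits an hcscK metric, there are no hcscK metrics on $X$ at all. I do not expect an analytic obstacle in this step; all the hard analysis has already been carried out in establishing the existence of the (non-hcscK) higher extremal K\"ahler metrics in Section \ref{sec:Proof} and in proving Theorem \ref{thm:BFheK}. The only point requiring care is the logical one, namely that the presence of a \emph{single} non-hcscK higher extremal K\"ahler metric in each class rules out \emph{all} hcscK metrics in that class even those without the symmetries imposed by the momentum construction. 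This is exactly the propagation supplied by Theorem \ref{thm:BFheK} together with the class-invariance of $\mathcal{F}_2$, and it is what upgrades the construction-specific non-hcscK statement of Theorem \ref{thm:nonhcscK} into the unconditional non-existence claimed here.
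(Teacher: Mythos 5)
Your argument is correct and is essentially the paper's own proof: the paper derives Corollary \ref{cor:nonexisthcscK} precisely by combining Corollary \ref{cor:rescalhcscK} (a non-hcscK higher extremal K\"ahler metric in each class $a \mathsf{C} + b S_\infty$) with Theorem \ref{thm:BFheK} (which forces every higher extremal K\"ahler representative of the class of an hcscK metric to be hcscK), and then uses the description (\ref{eq:KConeX}) of the K\"ahler cone. Your additional remarks on the class-invariance of $\mathcal{F}_2$ and the upgrade from the construction-specific Theorem \ref{thm:nonhcscK} to the unconditional statement are exactly the intended logical content.
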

\numberwithin{equation}{subsection}
\numberwithin{definition}{subsection}
\numberwithin{lemma}{subsection}
\numberwithin{theorem}{subsection}
\numberwithin{corollary}{subsection}
\numberwithin{remark}{subsection}
\numberwithin{case}{subsection}
\numberwithin{claim}{subsection}
\numberwithin{motivation}{subsection}
\numberwithin{question}{subsection}
\section{Generalization of the Results to All Pseudo-Hirzebruch Surfaces}\label{sec:Generalgd}
\subsection{The K\"ahler Cone of a Pseudo-Hirzebruch Surface}\label{subsec:KConegd}
Following the terminology of T{\o}nnesen-Friedman \cite{Tonnesen:1998:eKminruledsurf} we define a \textit{pseudo-Hirzebruch surface} to be a compact complex surface of the form $X := \mathbb{P} \left(\mathrm{L} \oplus \mathcal{O}\right)$ where $\mathrm{L}$ is a holomorphic line bundle of degree $\mathtt{d} \neq 0$ on a (compact) Riemann surface $\Sigma$ of genus $\mathtt{g} \geq 2$. Note that $X$ is a minimal ruled (complex) surface. We equip $\Sigma$ with a K\"ahler metric $\omega_\Sigma$ of constant scalar curvature $\mathrm{S}_\Sigma := -2 \left(\mathtt{g} - 1\right)$ and we equip $\mathrm{L}$ with a Hermitian metric $h$ of curvature form $\mathrm{F} \left(h\right) := \mathtt{d} \omega_\Sigma$ so that the area of $\Sigma$ w.r.t. $\omega_\Sigma$ and the Ricci curvature form of $\omega_\Sigma$ are respectively the following (where $\chi = -2 \left(\mathtt{g} - 1\right)$ is the Euler characteristic of $\Sigma$):
\begin{equation}\label{eq:ArhoSigma}
\mathrm{A}_\Sigma = \int\limits_{\Sigma} \omega_\Sigma = 2 \pi \frac{\chi}{\mathrm{S}_\Sigma} = 2 \pi \hspace{1pt}, \hspace{5pt} \rho_\Sigma = \mathrm{S}_\Sigma \omega_\Sigma = -2 \left(\mathtt{g} - 1\right) \omega_\Sigma
\end{equation} \par
As in Subsection \ref{subsec:KCone} let $\mathsf{C}$ be the Poincar\'e dual of a typical fibre of $X$, $S_\infty = \mathbb{P} \left(\mathrm{L} \oplus \left\lbrace 0 \right\rbrace\right) \subseteq \mathbb{P} \left(\mathrm{L} \oplus \mathcal{O}\right) = X$ be the infinity divisor of $X$ and $S_0 = \mathbb{P} \left(\left\lbrace 0 \right\rbrace \oplus \mathcal{O}\right) \subseteq \mathbb{P} \left(\mathrm{L} \oplus \mathcal{O}\right) = X$ be the zero divisor of $X$ (and $\Sigma$ be identified with $S_0$ as a (complex) curve in $X$), and further let $c_1 \left(\mathrm{L}\right)$ be the first Chern class of $\mathrm{L}$, $\left[\omega_\Sigma\right]$ be the K\"ahler class of $\omega_\Sigma$ and $\left[\Sigma\right]$ be the fundamental class of $\Sigma$ (and $\left[\Sigma\right]$ be identified with $S_0$ in $H^2 \left(X, \mathbb{R}\right)$). In this general case we have the following intersection formulae similar to the intersection formulae (\ref{eq:IntersectForm}) and (\ref{eq:Sigma}) in the special case in Subsection \ref{subsec:KCone} (Barth, Hulek et al. \cite{Barth:2004:CmpctCmplxSurf}, Sz\'ekelyhidi \cite{Szekelyhidi:2014:eKintro} and T{\o}nnesen-Friedman \cite{Tonnesen:1998:eKminruledsurf}):
\begin{equation}\label{eq:IntersectFormgd}
\mathsf{C}^2 = 0 \hspace{1pt}, \hspace{5pt} S_\infty^2 = -\mathtt{d} \hspace{1pt}, \hspace{5pt} S_0^2 = \mathtt{d} \hspace{1pt}, \hspace{5pt} \mathsf{C} \cdot S_\infty = 1 \hspace{1pt}, \hspace{5pt} \mathsf{C} \cdot S_0 = 1 \hspace{1pt}, \hspace{5pt} S_\infty \cdot S_0 = 0
\end{equation}
\begin{equation}\label{eq:Sigmagd}
c_1 \left(\mathrm{L}\right) \cdot \left[\Sigma\right] = \mathtt{d} \hspace{1pt}, \hspace{5pt} \left[\omega_\Sigma\right] \cdot \left[\Sigma\right] = 2 \pi \hspace{1pt}, \hspace{5pt} \mathsf{C} \cdot \left[\Sigma\right] = 1 \hspace{1pt}, \hspace{5pt} S_\infty \cdot \left[\Sigma\right] = 0 \hspace{1pt}, \hspace{5pt} S_0 \cdot \left[\Sigma\right] = \mathtt{d}
\end{equation} \par
Now let us first assume that $\mathtt{d} < 0$ which will be the case in the remainder of Subsection \ref{subsec:KConegd} as well as in Subsection \ref{subsec:MomentConstructgd}. By using the Leray-Hirsch Theorem and the Nakai-Moishezon Criterion in the real cohomology case (Fujiki \cite{Fujiki:1992:eKruledmani}, Lamari \cite{Lamari:1999:Kcone}, LeBrun-Singer \cite{LeBrun:1993:scalflatKcmpctcmplxsurf} and T{\o}nnesen-Friedman \cite{Tonnesen:1998:eKminruledsurf}) we compute the K\"ahler cone of $X$ as follows (attributed to Fujiki \cite{Fujiki:1992:eKruledmani} and given in T{\o}nnesen-Friedman \cite{Tonnesen:1998:eKminruledsurf}):
\begin{equation*}
H^2 \left(X, \mathbb{R}\right) = \mathbb{R} \mathsf{C} \oplus \mathbb{R} S_\infty
\end{equation*}
\begin{align}\label{eq:KConeXgd}
H^{\left(1,1\right)} \left(X, \mathbb{R}\right)^+ &= \left\lbrace a \mathsf{C} + b S_\infty \hspace{3pt} \vert \hspace{3pt} 2 a b - \mathtt{d} b^2 > 0 \hspace{1pt}, \hspace{5pt} b > 0 \hspace{1pt}, \hspace{5pt} a - \mathtt{d} b > 0 \hspace{1pt}, \hspace{5pt} a > 0 \hspace{1pt}, \hspace{5pt} a > 0 \right\rbrace \\
&= \left\lbrace a \mathsf{C} + b S_\infty \hspace{3pt} \vert \hspace{3pt} a, b > 0 \right\rbrace \nonumber
\end{align}
where the inequalities characterizing a general K\"ahler class $a \mathsf{C} + b S_\infty$ on $X$ are obtained by substituting the intersection formulae (\ref{eq:IntersectFormgd}) and (\ref{eq:Sigmagd}) in Corollary \ref{cor:TFKCone} which follows from Theorem \ref{thm:Nakai-Moishezon}, both of which are applicable in this general setting of the pseudo-Hirzebruch surface $X = \mathbb{P} \left(\mathrm{L} \oplus \mathcal{O}\right)$ as well (see \cite{Fujiki:1992:eKruledmani} and \cite{Tonnesen:1998:eKminruledsurf}). \par
Over here also we consider the K\"ahler classes $2 \pi \left(\mathsf{C} + m S_\infty\right)$ where $m > 0$ only and obtain the expected results on higher extremal K\"ahler and hcscK metrics in these K\"ahler classes, and since being a higher extremal K\"ahler metric (and an hcscK metric respectively) is a scale-invariant property as seen in Subsection \ref{subsec:AnalysisODEBVP} (and in Section \ref{sec:Bando-Futaki} respectively), we can rescale the constructed metrics and generalize the results to all the K\"ahler classes $a \mathsf{C} + b S_\infty$ where $a, b > 0$.
\subsection{The Momentum Construction Method Applied to a Pseudo-Hirzebruch Surface}\label{subsec:MomentConstructgd}
We now use the momentum construction method attributed to Hwang-Singer \cite{Hwang:2002:MomentConstruct} (described briefly in Subsection \ref{subsec:MomentConstruct}) to construct a higher extremal K\"ahler metric $\omega$ in the K\"ahler class $2 \pi \left(\mathsf{C} + m S_\infty\right)$ with $m > 0$ whose top Chern form $c_2 \left(\omega\right)$ satisfies the following equation:
\begin{equation}\label{eq:ChernheKgd}
c_2 \left(\omega\right) = \frac{\mathtt{d}^2 \lambda}{2 \left(2 \pi\right)^2} \omega^2
\end{equation}
where $\nabla^{1,0} \lambda$ is a holomorphic vector field. \par
We carefully go through the calculations involving holomorphic coordinates on the surface $X$ done in Pingali \cite{Pingali:2018:heK} and Sz\'ekelyhidi \cite{Szekelyhidi:2014:eKintro} by following all their conventions and observe where the factors containing the genus $\mathtt{g}$ and the degree $\mathtt{d}$ appear in the expressions of $\omega$, $\omega^2$, the curvature form matrix $\Theta \left(\omega\right)$ and $c_2 \left(\omega\right)$. Note that $\mathtt{d} < 0$ throughout this discussion. \par
As in Subsection \ref{subsec:MomentConstruct} let $\mathtt{p} : X \to \Sigma$ be the fibre bundle projection, $z$ be a local holomorphic coordinate on $\Sigma$, $w$ be a holomorphic fibre coordinate on $\mathrm{L}$ corresponding to a local holomorphic trivialization around $z$, $s := \ln \left| \left(z,w\right) \right|_{h}^2 = \ln \left| w \right|^2 + \ln h \left(z\right)$ be the fibrewise coordinate on the total space of $\mathrm{L}$ minus the zero section, $f \left(s\right)$ be strictly convex and $s - \mathtt{d} f \left(s\right)$ be strictly increasing, and let $\omega$ satisfy the following ansatz (as in \cite{Pingali:2018:heK} and \cite{Szekelyhidi:2014:eKintro}):
\begin{equation}\label{eq:ansatzgd}
\omega = \mathtt{p}^* \omega_\Sigma + \sqrt{-1} \partial \bar{\partial} f \left(s\right)
\end{equation} \par
We have the following coordinate equations (\cite{Pingali:2018:heK} and \cite{Szekelyhidi:2014:eKintro}):
\begin{align}\label{eq:del2bars}
\sqrt{-1} \partial \bar{\partial} s &= \sqrt{-1} \partial \bar{\partial} \ln h \left(z\right) \\
&= -\mathtt{d} \mathtt{p}^* \omega_\Sigma \nonumber
\end{align}
\begin{align}\label{eq:del2barfs}
\sqrt{-1} \partial \bar{\partial} f \left(s\right) &= f' \left(s\right) \sqrt{-1} \partial \bar{\partial} \ln h \left(z\right) + f'' \left(s\right) \sqrt{-1} \frac{d w \wedge d \bar{w}}{\left| w \right|^2} \\
&= -\mathtt{d} f' \left(s\right) \mathtt{p}^* \omega_\Sigma + f'' \left(s\right) \sqrt{-1} \frac{d w \wedge d \bar{w}}{\left| w \right|^2} \nonumber
\end{align}
where we are using the facts that $\mathrm{F} \left(h\right) = - \sqrt{-1} \partial \bar{\partial} \ln h \left(z\right)$ and $\mathrm{F} \left(h\right) = \mathtt{d} \mathtt{p}^* \omega_\Sigma$ (as defined in Subsection \ref{subsec:KConegd}). \par
We write down the expression for $\omega$ as follows:
\begin{equation}\label{eq:omegagd}
\omega = \left(1 - \mathtt{d} f' \left(s\right)\right) \mathtt{p}^* \omega_\Sigma + f'' \left(s\right) \sqrt{-1} \frac{d w \wedge d \bar{w}}{\left| w \right|^2}
\end{equation} \par
We want $\left[\omega\right] = 2 \pi \left(\mathsf{C} + m S_\infty\right)$ and the intersection formulae (\ref{eq:IntersectFormgd}) and (\ref{eq:Sigmagd}) and equation (\ref{eq:omegagd}) help us in computing the following integrals:
\begin{align}\label{eq:areaC}
2 \pi m = \left[\omega\right] \cdot \mathsf{C} := \int\limits_{\mathsf{C}} \omega &= \int\limits_{\mathbb{C} \setminus \left\lbrace 0 \right\rbrace} f'' \left(s\right) \sqrt{-1} \frac{d w \wedge d \bar{w}}{\left| w \right|^2} \\
&= 2 \pi \left( \lim\limits_{s \to \infty} f' \left(s\right) - \lim\limits_{s \to -\infty} f' \left(s\right) \right) \nonumber
\end{align}
\begin{align}\label{eq:areaSinfty}
2 \pi \left(1 - \mathtt{d} m\right) = \left[\omega\right] \cdot S_\infty := \int\limits_{S_\infty} \omega &= \int\limits_{\Sigma} \lim\limits_{s \to \infty} \left(1 - \mathtt{d} f' \left(s\right)\right) \omega_\Sigma \\
&= 2 \pi \left(1 - \mathtt{d} \lim\limits_{s \to \infty} f' \left(s\right)\right) \nonumber
\end{align}
which give us $0 \leq f' \left(s\right) \leq m$. \par
We then compute the curvature form matrix $\Theta \left(\omega\right)$ as follows:
\begin{equation}\label{eq:omega2gd}
\omega^2 = 2 \left(1 - \mathtt{d} f' \left(s\right)\right) f'' \left(s\right) \mathtt{p}^* \omega_\Sigma \sqrt{-1} \frac{d w \wedge d \bar{w}}{\left| w \right|^2}
\end{equation}
\begin{equation}\label{eq:Curv1gd}
\Theta \left(\omega\right) =
\begin{bmatrix}
- \partial \bar{\partial} \ln \left(1 - \mathtt{d} f' \left(s\right)\right) + 2 \left(\mathtt{g} - 1\right) \sqrt{-1} \mathtt{p}^* \omega_\Sigma & 0 \\
0 & - \partial \bar{\partial} \ln \left(f'' \left(s\right)\right)
\end{bmatrix}
\end{equation}
where we are using $\rho_\Sigma = -2 \left(\mathtt{g} - 1\right) \omega_\Sigma$ from the equations (\ref{eq:ArhoSigma}), $\rho_\Sigma$ being the Ricci curvature form of $\omega_\Sigma$. \par
Again as in Subsection \ref{subsec:MomentConstruct} we take the Legendre Transform $F \left(\tau\right)$ in the variable $\tau := f' \left(s\right) \in \left[0, m\right]$ as $f \left(s\right) + F \left(\tau\right) = s \tau$. Then the momentum profile of $\omega$ is $\phi \left(\tau\right) := \frac{1}{F'' \left(\tau\right)} = f'' \left(s\right)$. We further have $f''' \left(s\right) = \phi' \left(\tau\right) \phi \left(\tau\right)$ (\cite{Pingali:2018:heK}). We take the new variable $\gamma := -\mathtt{d} \tau + 1 \in \left[1, -\mathtt{d} m + 1\right]$ after which we get $\gamma = 1 - \mathtt{d} f' \left(s\right)$, $\phi \left(\gamma\right) = f'' \left(s\right)$ and $f''' \left(s\right) = -\mathtt{d} \phi' \left(\gamma\right) \phi \left(\gamma\right)$. We will also need the following coordinate equation (\cite{Pingali:2018:heK} and \cite{Szekelyhidi:2014:eKintro}):
\begin{equation}\label{eq:2delbars}
\sqrt{-1} \partial s \bar{\partial} s = \sqrt{-1} \frac{d w \wedge d \bar{w}}{\left| w \right|^2}
\end{equation} \par
Using the coordinate equations (\ref{eq:del2bars}), (\ref{eq:del2barfs}) and (\ref{eq:2delbars}) we write down the curvature form matrix $\sqrt{-1} \Theta \left(\omega\right)$ in terms of $\phi \left(\gamma\right)$ as follows (\cite{Pingali:2018:heK}):
\begingroup
\addtolength{\jot}{2em}
\begin{gather}\label{eq:Curv2gd}
\sqrt{-1} \Theta \left(\omega\right) =
\begin{bmatrix}
- \sqrt{-1} \partial \bar{\partial} \ln \left(\gamma\right) - 2 \left(\mathtt{g} - 1\right) \mathtt{p}^* \omega_\Sigma & 0 \\
0 & - \sqrt{-1} \partial \bar{\partial} \ln \left(\phi\right)
\end{bmatrix} \\
=
\begin{bmatrix}
\sqrt{-1} \frac{\partial \gamma \bar{\partial} \gamma}{\gamma^2} - \frac{1}{\gamma} \sqrt{-1} \partial \bar{\partial} \gamma - 2 \left(\mathtt{g} - 1\right) \mathtt{p}^* \omega_\Sigma & 0 \\
0 & \frac{\left(\phi'\right)^2 - \phi \phi''}{\phi^2} \sqrt{-1} \partial \gamma \bar{\partial} \gamma - \frac{\phi'}{\phi} \sqrt{-1} \partial \bar{\partial} \gamma
\end{bmatrix} \nonumber \\
=
\begin{bmatrix}
\mathtt{d}^2 \frac{\phi}{\gamma} \left( \frac{\phi}{\gamma} - \phi' \right) \sqrt{-1} \frac{d w \wedge d \bar{w}}{\left| w \right|^2} - \left(\mathtt{d}^2 \frac{\phi}{\gamma} + 2 \left(\mathtt{g} - 1\right)\right) \mathtt{p}^* \omega_\Sigma & 0 \\
0 & - \mathtt{d}^2 \phi'' \phi \sqrt{-1} \frac{d w \wedge d \bar{w}}{\left| w \right|^2} - \mathtt{d}^2 \phi' \mathtt{p}^* \omega_\Sigma
\end{bmatrix} \nonumber
\end{gather}
\endgroup
\par
The top Chern form of $\omega$ is given in terms of $\phi \left(\gamma\right)$ by:
\begin{equation}\label{eq:Cherngd}
c_2 \left(\omega\right) = \frac{1}{\left(2 \pi\right)^2} \mathtt{p}^* \omega_\Sigma \sqrt{-1} \frac{d w \wedge d \bar{w}}{\left| w \right|^2} \mathtt{d}^2 \frac{\phi}{\gamma^2} \left( \gamma \left(\mathtt{d}^2 \phi + 2 \left(\mathtt{g} - 1\right) \gamma\right) \phi'' + \mathtt{d}^2 \phi' \left(\phi' \gamma - \phi\right) \right)
\end{equation} \par
Comparing equations (\ref{eq:ChernheKgd}), (\ref{eq:omega2gd}) and (\ref{eq:Cherngd}) and since $\nabla^{1,0} \lambda = - \mathtt{d} \lambda' w \frac{\partial}{\partial w}$ (from equation (\ref{eq:gradlambda})) is a holomorphic vector field if and only if $\lambda = A \gamma + B$ for some $A, B \in \mathbb{R}$, we derive the following ODE for $\phi \left(\gamma\right)$ on $\left[1, -\mathtt{d} m + 1\right]$ (for some $C \in \mathbb{R}$):
\begin{equation*}
\begin{gathered}
\gamma \left(\mathtt{d}^2 \phi + 2 \left(\mathtt{g} - 1\right) \gamma\right) \phi'' + \mathtt{d}^2 \phi' \left(\phi' \gamma - \phi\right) = \left(A \gamma + B\right) \gamma^3 \\
2 \left(\mathtt{g} - 1\right) \phi'' + \mathtt{d}^2 \left( \frac{\phi \phi'}{\gamma} \right)' = A \gamma^2 + B \gamma \\
2 \left(\mathtt{g} - 1\right) \phi' + \mathtt{d}^2 \frac{\phi \phi'}{\gamma} = A \frac{\gamma^3}{3} + B \frac{\gamma^2}{2} + C
\end{gathered}
\end{equation*}
\begin{equation}\label{eq:ODEgd}
\left( 2 \left(\mathtt{g} - 1\right) \gamma + \mathtt{d}^2 \phi \right) \phi' = A \frac{\gamma^4}{3} + B \frac{\gamma^3}{2} + C \gamma
\end{equation} \par
As explained in \cite{Szekelyhidi:2014:eKintro}, for $\omega$ to extend smoothly across the zero and infinity divisors of $X$ we should have the following boundary conditions on $\phi$:
\begin{equation}\label{eq:BVP0gd}
\begin{gathered}
\phi \left(1\right) = \lim\limits_{\gamma \to 1} \phi \left(\gamma\right) = \lim\limits_{s \to -\infty} f'' \left(s\right) = 0 \\
\phi' \left(1\right) = -\frac{1}{\mathtt{d}} \lim\limits_{\gamma \to 1} \left(-\mathtt{d} \phi' \left(\gamma\right)\right) = -\frac{1}{\mathtt{d}} \lim\limits_{s \to -\infty} \frac{f''' \left(s\right)}{f'' \left(s\right)} = -\frac{1}{\mathtt{d}}
\end{gathered}
\end{equation}
\begin{equation}\label{eq:BVPinftygd}
\begin{gathered}
\phi \left(-\mathtt{d} m + 1\right) = \lim\limits_{\gamma \to -\mathtt{d} m + 1} \phi \left(\gamma\right) = \lim\limits_{s \to \infty} f'' \left(s\right) = 0 \\
\phi' \left(-\mathtt{d} m + 1\right) = -\frac{1}{\mathtt{d}} \lim\limits_{\gamma \to -\mathtt{d} m + 1} \left(-\mathtt{d} \phi' \left(\gamma\right)\right) = -\frac{1}{\mathtt{d}} \lim\limits_{s \to \infty} \frac{f''' \left(s\right)}{f'' \left(s\right)} = \frac{1}{\mathtt{d}}
\end{gathered}
\end{equation}
where we are using the relations between $\phi \left(\gamma\right)$ and $f \left(s\right)$ (\cite{Pingali:2018:heK}). Also as $f$ was required to be strictly convex we must have $\phi > 0$ on $\left(1, -\mathtt{d} m + 1\right)$ in addition to the boundary conditions (\ref{eq:BVP0gd}) and (\ref{eq:BVPinftygd}). \par
Now as in Subsection \ref{subsec:AnalysisODEBVP} we define $p \left(\gamma\right) := \mathtt{d}^2 \left( A \frac{\gamma^3}{3} + B \frac{\gamma^2}{2} + C \right)$ and do the change of variables $v := \frac{\left( 2 \left(\mathtt{g} - 1\right) \gamma + \mathtt{d}^2 \phi \right)^2}{2}$, $\gamma \in \left[1, -\mathtt{d} m + 1\right]$ and obtain the following ODE BVP:
\begin{equation}\label{eq:ODEBVPgd}
\begin{gathered}
v' = 2 \left(\mathtt{g} - 1\right) \sqrt{2} \sqrt{v} + p \left(\gamma\right) \gamma \hspace{5pt} \text{on} \hspace{5pt} \left[1, -\mathtt{d} m + 1\right] \\
v \left(1\right) = 2 \left(\mathtt{g} - 1\right)^2 \hspace{1pt}, \hspace{5pt} v \left(-\mathtt{d} m + 1\right) = 2 \left(\mathtt{g} - 1\right)^2 \left(-\mathtt{d} m + 1\right)^2 \\
v' \left(1\right) = 2 \left(\mathtt{g} - 1\right) \left[ 2 \left(\mathtt{g} - 1\right) - \mathtt{d} \right] \hspace{1pt}, \hspace{5pt} v' \left(-\mathtt{d} m + 1\right) = 2 \left(\mathtt{g} - 1\right) \left(-\mathtt{d} m + 1\right) \left[ 2 \left(\mathtt{g} - 1\right) + \mathtt{d} \right] \\
v \left(\gamma\right) > 2 \left(\mathtt{g} - 1\right)^2 \gamma^2 \hspace{5pt} \text{on} \hspace{5pt} \left(1, -\mathtt{d} m + 1\right)
\end{gathered}
\end{equation} \par
After this point the entire analysis developed in Subsections \ref{subsec:AnalysisODEBVP}, \ref{subsec:Proof1} and \ref{subsec:Proof2} with all the bounds and estimates can be worked out in this general setting as well and in exactly the same way it can be proven that for each $m > 0$ there exist unique $A, B, C \in \mathbb{R}$ such that there exists a unique smooth solution $v$ to the ODE BVP (\ref{eq:ODEBVPgd}) on $\left[1, -\mathtt{d} m + 1\right]$ satisfying all the required conditions. \par
Hence given a pseudo-Hirzebruch surface $X$ of genus $\mathtt{g} \geq 2$ and degree $\mathtt{d} < 0$ we can construct higher extremal K\"ahler metrics $\omega$ satisfying equation (\ref{eq:ChernheKgd}) in the K\"ahler classes $2 \pi \left(\mathsf{C} + m S_\infty\right)$ for all $m > 0$, and by rescaling these constructed metrics we will get higher extremal K\"ahler metrics in all the K\"ahler classes on $X$. Then we can verify that these constructed metrics $\omega$ cannot be hcscK i.e. $\nabla^{1,0} \lambda \neq 0$ just like Pingali \cite{Pingali:2018:heK} did in the proof of Theorem \ref{thm:nonhcscK} and by the arguments in Section \ref{sec:Bando-Futaki} it will follow that hcscK metrics do not exist on the surface $X$.
\subsection{The Case of Positive Degree}\label{subsec:DegPos}
Now we will see what happens when $\mathtt{d} > 0$. All the other things about $X = \mathbb{P} \left(\mathrm{L} \oplus \mathcal{O}\right)$ are exactly the same as in Subsections \ref{subsec:KConegd} and \ref{subsec:MomentConstructgd}. Let us first observe that if $\mathrm{L}$ has degree $\mathtt{d}$ over $\Sigma$ then the dual vector bundle $\mathrm{L}^*$ is a holomorphic line bundle of degree $-\mathtt{d}$ over $\Sigma$ and there is a canonical isomorphism of complex manifolds $\mathbb{P} \left(\mathrm{L} \oplus \mathcal{O}\right) \cong \mathbb{P} \left(\mathrm{L}^* \oplus \mathcal{O}\right)$ given in local holomorphic coordinates as $\left(z, w\right) \mapsto \left(z, w^{-1}\right)$ which maps the zero and infinity divisors of $\mathbb{P} \left(\mathrm{L} \oplus \mathcal{O}\right)$ to the infinity and zero divisors of $\mathbb{P} \left(\mathrm{L}^* \oplus \mathcal{O}\right)$ respectively. Just as in Subsection \ref{subsec:KConegd} we use Theorem \ref{thm:Nakai-Moishezon} and Corollary \ref{cor:TFKCone} and the intersection formulae (\ref{eq:IntersectFormgd}) and (\ref{eq:Sigmagd}) and describe the K\"ahler cone of $X$ as follows (Fujiki \cite{Fujiki:1992:eKruledmani} and T{\o}nnesen-Friedman \cite{Tonnesen:1998:eKminruledsurf}):
\begin{equation*}
H^2 \left(X, \mathbb{R}\right) = \mathbb{R} \mathsf{C} \oplus \mathbb{R} S_0
\end{equation*}
\begin{align}\label{eq:KConeXgdpos}
H^{\left(1,1\right)} \left(X, \mathbb{R}\right)^+ &= \left\lbrace a \mathsf{C} + b S_0 \hspace{3pt} \vert \hspace{3pt} 2 a b + \mathtt{d} b^2 > 0 \hspace{1pt}, \hspace{5pt} b > 0 \hspace{1pt}, \hspace{5pt} a + \mathtt{d} b > 0 \hspace{1pt}, \hspace{5pt} a > 0 \hspace{1pt}, \hspace{5pt} a + \mathtt{d} b > 0 \right\rbrace \\
&= \left\lbrace a \mathsf{C} + b S_0 \hspace{3pt} \vert \hspace{3pt} a, b > 0 \right\rbrace \nonumber
\end{align}
Note that in the description of the K\"ahler cone over here we are using the zero divisor of $X$ instead of the infinity divisor. \par
We want to construct a higher extremal K\"ahler metric $\omega$ in the K\"ahler class $2 \pi \left(\mathsf{C} + m S_0\right)$ with $m > 0$ satisfying the equation (\ref{eq:ChernheKgd}). Following the calculations done in Subsection \ref{subsec:MomentConstructgd} we write the ansatz (\ref{eq:ansatzgd}), and then the expression (\ref{eq:omegagd}) holds true in this case as well along with all the coordinate equations. Only the computation of the integrals in equations (\ref{eq:areaC}) and (\ref{eq:areaSinfty}) changes in the following way:
\begin{align}\label{eq:areaCpos}
2 \pi m = \left[\omega\right] \cdot \mathsf{C} := \int\limits_{\mathsf{C}} \omega &= \int\limits_{\mathbb{C} \setminus \left\lbrace 0 \right\rbrace} f'' \left(s\right) \sqrt{-1} \frac{d w \wedge d \bar{w}}{\left| w \right|^2} \\
&= 2 \pi \left( \lim\limits_{s \to \infty} f' \left(s\right) - \lim\limits_{s \to -\infty} f' \left(s\right) \right) \nonumber
\end{align}
\begin{align}\label{eq:areaSinftypos}
2 \pi \left(1 + \mathtt{d} m\right) = \left[\omega\right] \cdot S_0 := \int\limits_{S_0} \omega &= \int\limits_{\Sigma} \lim\limits_{s \to -\infty} \left(1 - \mathtt{d} f' \left(s\right)\right) \omega_\Sigma \\
&= 2 \pi \left(1 - \mathtt{d} \lim\limits_{s \to -\infty} f' \left(s\right)\right) \nonumber
\end{align}
which gives us $-m \leq f' \left(s\right) \leq 0$. We take the variable of the Legendre Transform as $\tau := f' \left(s\right) \in \left[-m, 0\right]$ and the new variable as $\gamma := -\mathtt{d} \tau + 1 \in \left[1, \mathtt{d} m + 1\right]$ while $F$ and $\phi$ remain the same as in Subsection \ref{subsec:MomentConstructgd}. The expressions for $\Theta \left(\omega\right)$ and $c_2 \left(\omega\right)$ also remain unchanged and hence we will obtain the following ODE for $\phi \left(\gamma\right)$, $\gamma \in \left[1, \mathtt{d} m + 1\right]$:
\begin{equation}\label{eq:ODEgdpos}
\left( 2 \left(\mathtt{g} - 1\right) \gamma + \mathtt{d}^2 \phi \right) \phi' = A \frac{\gamma^4}{3} + B \frac{\gamma^3}{2} + C \gamma
\end{equation}
The appropriate boundary conditions for $\phi$ in this case are the following (Pingali \cite{Pingali:2018:heK} and Sz\'ekelyhidi \cite{Szekelyhidi:2014:eKintro}):
\begin{equation}\label{eq:BVP0gdpos}
\begin{gathered}
\phi \left(1\right) = \lim\limits_{s \to \infty} f'' \left(s\right) = 0 \\
\phi' \left(1\right) = -\frac{1}{\mathtt{d}} \lim\limits_{s \to \infty} \frac{f''' \left(s\right)}{f'' \left(s\right)} = \frac{1}{\mathtt{d}}
\end{gathered}
\end{equation}
\begin{equation}\label{eq:BVPinftygdpos}
\begin{gathered}
\phi \left(\mathtt{d} m + 1\right) = \lim\limits_{s \to -\infty} f'' \left(s\right) = 0 \\
\phi' \left(\mathtt{d} m + 1\right) = -\frac{1}{\mathtt{d}} \lim\limits_{s \to -\infty} \frac{f''' \left(s\right)}{f'' \left(s\right)} = -\frac{1}{\mathtt{d}}
\end{gathered}
\end{equation}
Again as in Subsection \ref{subsec:MomentConstructgd}, $\phi > 0$ on $\left(1, \mathtt{d} m + 1\right)$. \par
Comparing the equations (\ref{eq:ODEgdpos}), (\ref{eq:BVP0gdpos}) and (\ref{eq:BVPinftygdpos}) with the equations (\ref{eq:ODEgd}), (\ref{eq:BVP0gd}) and (\ref{eq:BVPinftygd}) we observe that in the case when $\mathtt{d} > 0$ we get the same ODE BVP for the momentum profile $\phi$ of the K\"ahler metric $\omega$ as we would have gotten for a holomorphic line bundle of degree $-\mathtt{d}$ over $\Sigma$, except that the to be constructed metric $\omega$ now belongs to the K\"ahler class $2 \pi \left(\mathsf{C} + m S_0\right)$ on $X = \mathbb{P} \left(\mathrm{L} \oplus \mathcal{O}\right)$. \par
So for this case as well we obtain the same results on the existence of higher extremal K\"ahler metrics and the non-existence of hcscK metrics as those gotten in Subsection \ref{subsec:MomentConstructgd}. \par
We finally summarize the work done in Subsections \ref{subsec:KConegd}, \ref{subsec:MomentConstructgd} and \ref{subsec:DegPos} in the following result:
\begin{theorem}\label{thm:Generalgd}
Given a pseudo-Hirzebruch surface $X := \mathbb{P} \left(\mathrm{L} \oplus \mathcal{O}\right)$ where $\mathrm{L}$ is a holomorphic line bundle of degree $\mathtt{d} \neq 0$ on a compact Riemann surface $\Sigma$ of genus $\mathtt{g} \geq 2$, for all $a, b > 0$ there exists a K\"ahler metric $\eta$ on $X$, which is higher extremal K\"ahler but not hcscK, satisfying the following:
\begin{equation}\label{eq:probheKnonhcscKgd}
\left[\eta\right] = a \mathsf{C} + b S_\infty \hspace{1pt}, \hspace{5pt} c_2 \left(\eta\right) = \frac{\mathtt{d}^2 \lambda}{2 a^2} \eta^2 \hspace{1pt}, \hspace{5pt} \nabla^{1,0} \lambda \neq 0 \in \mathfrak{h} \left(X\right) \hspace{7pt} \left(\text{If $\mathtt{d} < 0$}\right)
\end{equation}
\begin{equation}\label{eq:probheKnonhcscKgdpos}
\left[\eta\right] = a \mathsf{C} + b S_0 \hspace{1pt}, \hspace{5pt} c_2 \left(\eta\right) = \frac{\mathtt{d}^2 \lambda}{2 a^2} \eta^2 \hspace{1pt}, \hspace{5pt} \nabla^{1,0} \lambda \neq 0 \in \mathfrak{h} \left(X\right) \hspace{7pt} \left(\text{If $\mathtt{d} > 0$}\right)
\end{equation}
Further hcscK metrics do not exist in any K\"ahler class on $X$.
\end{theorem}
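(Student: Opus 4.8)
The plan is to establish the two assertions of Theorem \ref{thm:Generalgd} by a single mechanism: first produce, in every K\"ahler class on $X$, a higher extremal K\"ahler metric that is provably not hcscK, and then feed this into the Bando--Futaki machinery of Section \ref{sec:Bando-Futaki} to rule out hcscK metrics altogether. I would organise the construction according to the sign of $\mathtt{d}$.

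For $\mathtt{d}<0$, I would fix $m>0$ and run the momentum construction of Subsection \ref{subsec:MomentConstructgd}, which reduces the search for a higher extremal K\"ahler metric $\omega$ in $2\pi(\mathsf{C}+mS_\infty)$ obeying (\ref{eq:ChernheKgd}) to solving the ODE BVP (\ref{eq:ODEBVPgd}) for the momentum profile on $[1,-\mathtt{d}m+1]$. The key observation is that this BVP has exactly the same shape as (\ref{eq:ODEBVP1}) in the special case, differing only by the replacement of $2$ with $2(\mathtt{g}-1)$, an overall factor $\mathtt{d}^2$ in the polynomial $p(\gamma)$, and the shifted right endpoint. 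I would therefore transcribe the analysis of Subsections \ref{subsec:AnalysisODEBVP}, \ref{subsec:Proof1} and \ref{subsec:Proof2}: the analogues of Lemma \ref{lem:polynom}, Lemma \ref{lem:polynom1}, Lemma \ref{lem:derpolynom} and Corollary \ref{cor:derpolynom}, together with the positivity, continuation and monotonicity-in-$C$ results, deliver a unique $C=C(m)$ for which the solution exists on the whole interval and attains the final boundary value, hence the desired metric in $2\pi(\mathsf{C}+mS_\infty)$.

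To reach an arbitrary K\"ahler class I would use that the K\"ahler cone is precisely $\{a\mathsf{C}+bS_\infty : a,b>0\}$ by (\ref{eq:KConeXgd}) and that being higher extremal K\"ahler is scale-invariant by (\ref{eq:rescalheK}); rescaling the metric just built by a suitable positive constant then lands it in any prescribed $a\mathsf{C}+bS_\infty$, giving (\ref{eq:probheKnonhcscKgd}). For $\mathtt{d}>0$ I would invoke the biholomorphism $\mathbb{P}(\mathrm{L}\oplus\mathcal{O})\cong\mathbb{P}(\mathrm{L}^*\oplus\mathcal{O})$ of Subsection \ref{subsec:DegPos}, which transports the degree $-\mathtt{d}<0$ construction to $X$ with the roles of $S_0$ and $S_\infty$ interchanged, yielding the metric in $2\pi(\mathsf{C}+mS_0)$ and, after rescaling, (\ref{eq:probheKnonhcscKgdpos}). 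In both cases the inequality $\nabla^{1,0}\lambda\neq 0$ is verified exactly as in Theorem \ref{thm:nonhcscK}.

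To conclude the non-existence of hcscK metrics in every class I would argue by contradiction with Theorem \ref{thm:BFheK}: were some class to contain an hcscK metric, the second assertion of that theorem would force every higher extremal K\"ahler representative of the class to be hcscK, contradicting the non-hcscK metric constructed above in that very class. The main obstacle in this scheme is not conceptual but the bookkeeping of the ODE step: one must confirm that every sign condition and estimate from Subsections \ref{subsec:AnalysisODEBVP}--\ref{subsec:Proof2}, originally established for $\mathtt{g}=2$, $\mathtt{d}=-1$, survives the substitutions $2\mapsto 2(\mathtt{g}-1)$, the insertion of the $\mathtt{d}^2$ factors and the change of interval to $[1,-\mathtt{d}m+1]$ --- in particular that the generalised $L(m)<0$, $N(m)>0$ and the signs in the factorization of $\frac{d}{dC}(p_C(\gamma)\gamma)$ are preserved. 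Once these are checked, the remainder is a direct transcription of the special-case proof combined with the rescaling and Bando--Futaki steps.
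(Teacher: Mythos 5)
Your proposal follows essentially the same route as the paper: run the momentum construction for $\mathtt{d}<0$ to reduce to the ODE BVP (\ref{eq:ODEBVPgd}), transcribe the analysis of Subsections \ref{subsec:AnalysisODEBVP}--\ref{subsec:Proof2} under the substitutions $2\mapsto 2(\mathtt{g}-1)$ and the insertion of $\mathtt{d}^2$, rescale to cover the whole K\"ahler cone, handle $\mathtt{d}>0$ via the identification $\mathbb{P}(\mathrm{L}\oplus\mathcal{O})\cong\mathbb{P}(\mathrm{L}^*\oplus\mathcal{O})$ with $S_0$ and $S_\infty$ interchanged (the paper additionally verifies directly that the same ODE BVP arises), and conclude non-existence of hcscK metrics from Theorem \ref{thm:nonhcscK} together with Theorem \ref{thm:BFheK}. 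The bookkeeping caveat you flag (rechecking the signs of the generalised $L(m)$, $N(m)$ and of $\frac{d}{dC}(p_C(\gamma)\gamma)$) is exactly the point the paper itself leaves as a routine verification, so the proposal is correct and matches the paper's proof.
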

\numberwithin{equation}{section}
\numberwithin{definition}{section}
\numberwithin{lemma}{section}
\numberwithin{theorem}{section}
\numberwithin{corollary}{section}
\numberwithin{remark}{section}
\numberwithin{case}{section}
\numberwithin{claim}{section}
\numberwithin{motivation}{section}
\numberwithin{question}{section}
\section{Analogy with the Extremal K\"ahler Setup}\label{sec:Analogy}
In T{\o}nnesen-Friedman \cite{Tonnesen:1998:eKminruledsurf} and Hwang-Singer \cite{Hwang:2002:MomentConstruct} where the usual extremal K\"ahler analogue of this problem was studied, the following $2$ questions were asked regarding the existence and uniqueness of extremal K\"ahler metrics:
\begin{question}\label{ques:q1}
Given a compact K\"ahler manifold having an extremal K\"ahler metric in a given K\"ahler class, does it have an extremal K\"ahler metric in each K\"ahler class?
\end{question}
\begin{question}\label{ques:q2}
Given a compact K\"ahler manifold having an extremal K\"ahler metric in a given K\"ahler class, is this extremal K\"ahler metric unique in its K\"ahler class modulo the group action of the maximal connected group of automorphisms of the K\"ahler manifold?
\end{question} \par
T{\o}nnesen-Friedman \cite{Tonnesen:1998:eKminruledsurf} had proven that for a general pseudo-Hirzebruch surface $X$ of genus $\mathtt{g} \geq 2$ and degree $\mathtt{d} \neq 0$, both \textit{Question} \ref{ques:q1} and \textit{Question} \ref{ques:q2} cannot have an affirmative answer simultaneously i.e. if every K\"ahler class on $X$ has an extremal K\"ahler metric then there exists a K\"ahler class in which the uniqueness condition of extremal K\"ahler metrics (even upto automorphisms of $X$) fails, and vice versa if every K\"ahler class on $X$, which can be represented by an extremal K\"ahler metric, has a unique such extremal K\"ahler representative (upto automorphisms of $X$) then there exists a K\"ahler class in which extremal K\"ahler metrics do not exist (see \cite{Tonnesen:1998:eKminruledsurf} for the details). Specifically from the work of T{\o}nnesen-Friedman \cite{Tonnesen:1998:eKminruledsurf} it followed that the momentum construction method of Hwang-Singer \cite{Hwang:2002:MomentConstruct} yields extremal K\"ahler metrics which are not cscK only in the K\"ahler classes $a \mathsf{C} + b S_\infty$ if $\mathtt{d} < 0$ (and $a \mathsf{C} + b S_0$ if $\mathtt{d} > 0$) with $0 < \frac{b}{a} < k_1$ for a unique $k_1 = k_1 \left(\mathtt{g}, \mathtt{d}\right) = k_1 \left(\mathtt{g}, -\mathtt{d}\right) \in \mathbb{R}_{> 0}$. It later followed from the work of Apostolov, Calderbank et al. \cite{Apostolov:2008:Hamiltonian} that there do not exist extremal K\"ahler metrics (even without the symmetries of the momentum construction method) in the K\"ahler classes $a \mathsf{C} + b S_\infty$ if $\mathtt{d} < 0$ (and $a \mathsf{C} + b S_0$ if $\mathtt{d} > 0$) with $\frac{b}{a} \geq k_1$. In the special case where $\mathtt{g} = 2$ and $\mathtt{d} = -1$ whose exposition is given in Sz\'ekelyhidi \cite{Szekelyhidi:2014:eKintro} (and whose higher extremal K\"ahler analogue is studied in Pingali \cite{Pingali:2018:heK} and in this paper), it is seen that $k_1 = k_1 \left(2, -1\right) \approx 18.889$ (\cite{Szekelyhidi:2014:eKintro}). But in complete contrast to that, we have been able to construct higher extremal K\"ahler metrics which are not hcscK in all the K\"ahler classes $a \mathsf{C} + b S_\infty$ if $\mathtt{d} < 0$ (and $a \mathsf{C} + b S_0$ if $\mathtt{d} > 0$) with $a, b > 0$, which precisely constitute the K\"ahler cone of $X$ (Fujiki \cite{Fujiki:1992:eKruledmani} and T{\o}nnesen-Friedman \cite{Tonnesen:1998:eKminruledsurf}). So \textit{Question} \ref{ques:q1} has negative answer for all pseudo-Hirzebruch surfaces whereas we have answered the higher extremal K\"ahler version of \textit{Question} \ref{ques:q1} affirmatively for all pseudo-Hirzebruch surfaces. Regarding \textit{Question} \ref{ques:q2}, it was only recently proved by Berman-Berndtsson \cite{Berman:2017:UniqueeK} that on any compact K\"ahler $n$-manifold $\mathrm{M}$, cscK metrics (and even extremal K\"ahler metrics) in a given K\"ahler class are unique modulo the group action of $\operatorname{Aut}_0 \left(\mathrm{M}\right)$ (if they exist at all) where $\operatorname{Aut}_0 \left(\mathrm{M}\right)$ denotes the maximal connected group of automorphisms of $\mathrm{M}$. We hope to explore the higher extremal K\"ahler version of \textit{Question} \ref{ques:q2} at least for pseudo-Hirzebruch surfaces in our future works. \par
Besides these, T{\o}nnesen-Friedman \cite{Tonnesen:1998:eKminruledsurf} had already proven that cscK metrics do not exist on a general pseudo-Hirzebruch surface by actually computing the Futaki invariant on an arbitrary K\"ahler class and for a specific holomorphic vector field using a certain formula proven by LeBrun-Simanca \cite{LeBrun:1994:eKcmplxdeform}, and observing that it turns out to be non-zero. But for proving the non-existence of hcscK metrics on pseudo-Hirzebruch surfaces we have instead proven a result about the top Bando-Futaki invariant (viz. Theorem \ref{thm:BFheK}) analogous to the following result about the Futaki invariant proven by Calabi \cite{Calabi:1985:eK2} and LeBrun-Simanca \cite{LeBrun:1994:eKcmplxdeform} (and mentioned in \cite{Szekelyhidi:2014:eKintro} and \cite{Tonnesen:1998:eKminruledsurf}) and then used the fact that the momentum construction method in our higher extremal K\"ahler case yields a higher extremal K\"ahler metric which is not hcscK in each K\"ahler class on the surface.
\begin{theorem}[Calabi, LeBrun-Simanca]\label{thm:FeK}
Let $\mathrm{M}$ be a compact K\"ahler $n$-manifold. Let $\mathcal{F}_1 : \mathfrak{h} \left(\mathrm{M}\right) \times H^{\left(1,1\right)} \left(\mathrm{M}, \mathbb{R}\right)^+ \to \mathbb{R}$ be the Futaki invariant on $\mathrm{M}$. Let $\omega$ be an extremal K\"ahler metric on $\mathrm{M}$. Then $\omega$ is cscK if and only if $\mathcal{F}_1 \left(\cdot, \left[\omega\right]\right) \equiv 0$ on $\mathfrak{h} \left(\mathrm{M}\right)$. Furthermore let $\omega$ be cscK. Then every extremal K\"ahler metric in the K\"ahler class $\left[\omega\right]$ is cscK.
\end{theorem}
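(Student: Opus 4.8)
The plan is to mirror the proof of Theorem \ref{thm:BFheK} line for line, with the first Chern form in place of the top Chern form. The dictionary is: the function $\lambda$ (defined by $c_n \left(\omega\right) = \lambda \omega^n$) is replaced by the scalar curvature $\mathrm{S} \left(\omega\right)$, defined via $\mathrm{S} \left(\omega\right) \omega^n = 2 n \pi \hspace{1pt} c_1 \left(\omega\right) \wedge \omega^{n-1}$ as recalled in Subsection \ref{subsec:Background}, and the constant $\lambda_0$ is replaced by the average scalar curvature $\overline{\mathrm{S}}$. The two inputs that make the analogy exact are both recorded in Subsection \ref{subsec:Background}: $\omega$ is cscK if and only if $c_1 \left(\omega\right)$ is harmonic, and $\omega$ is extremal if and only if $\nabla^{1,0} \mathrm{S} \left(\omega\right) \in \mathfrak{h} \left(\mathrm{M}\right)$.

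First I would set up $\mathcal{F}_1$ Hodge-theoretically, paralleling (\ref{eq:Hodge1})--(\ref{eq:BFdef}): write $c_1 \left(\omega\right) - \mathtt{H} c_1 \left(\omega\right) = \sqrt{-1} \partial \bar{\partial} \psi$ for a function $\psi$ (the Ricci potential up to normalization) and set $\mathcal{F}_1 \left(Y, \left[\omega\right]\right) := \int_{\mathrm{M}} \mathcal{L}_Y \psi \hspace{1pt} \omega^n$. Reproducing the integration-by-parts chain (\ref{eq:BFcalc}) --- using $\int_{\mathrm{M}} \mathcal{L}_Y \left(\cdot\right) = 0$, the closedness $d \omega = 0$, and $d \left(\iota_Y \omega\right) = \sqrt{-1} \partial \bar{\partial} \mathtt{f}$ for $Y \in \mathfrak{h} \left(\mathrm{M}\right)$ --- one reaches $\mathcal{F}_1 \left(Y, \left[\omega\right]\right) = - c \int_{\mathrm{M}} \mathtt{f} \left(\mathrm{S} \left(\omega\right) - \overline{\mathrm{S}}\right) \omega^n$ for a fixed positive constant $c = \frac{1}{2 n \pi}$. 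The single genuine deviation from the top-degree case occurs here: whereas (\ref{eq:Hodge2}) gives $\sqrt{-1} \partial \bar{\partial} \varphi = \left(\lambda - \lambda_0\right) \omega^n$ directly because $c_n \left(\omega\right)$ and its harmonic projection are both multiples of $\omega^n$, for $c_1$ one must wedge with $\omega^{n-1}$ and use $\sqrt{-1} \partial \bar{\partial} \psi \wedge \omega^{n-1} = \frac{1}{2 n \pi} \left(\mathrm{S} \left(\omega\right) - \overline{\mathrm{S}}\right) \omega^n$, where $\overline{\mathrm{S}}$ is constant precisely because the $\omega^{n-1}$-trace of the harmonic form $\mathtt{H} c_1 \left(\omega\right)$ is itself harmonic --- hence constant on the compact $\mathrm{M}$ --- since contraction with $\omega$ commutes with the Laplacian on a K\"ahler manifold.

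Next, for extremal $\omega$ I would take $Y = \nabla^{1,0} \mathrm{S} \left(\omega\right) \in \mathfrak{h} \left(\mathrm{M}\right)$; the musical-isomorphism identity (\ref{eq:IntProd}) gives $\iota_Y \omega = \sqrt{-1} \bar{\partial} \mathrm{S} \left(\omega\right)$, hence $\mathtt{f} = \mathrm{S} \left(\omega\right) - \overline{\mathrm{S}}$ up to an additive constant that drops out since $\int_{\mathrm{M}} \left(\mathrm{S} \left(\omega\right) - \overline{\mathrm{S}}\right) \omega^n = 0$. This yields, exactly as in (\ref{eq:BFcalcfinal3}), $\mathcal{F}_1 \left(\nabla^{1,0} \mathrm{S} \left(\omega\right), \left[\omega\right]\right) = - c \lVert \mathrm{S} \left(\omega\right) - \overline{\mathrm{S}} \rVert_{\mathcal{L}^2 \left(\mathrm{M}, \omega\right)}^2$. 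The first assertion is now immediate: if $\mathcal{F}_1 \left(\cdot, \left[\omega\right]\right) \equiv 0$, evaluating at $\nabla^{1,0} \mathrm{S} \left(\omega\right)$ forces $\mathrm{S} \left(\omega\right) \equiv \overline{\mathrm{S}}$, i.e. $\omega$ is cscK; conversely if $\omega$ is cscK then $c_1 \left(\omega\right)$ is harmonic, so $\psi$ is constant and $\mathcal{F}_1 \left(\cdot, \left[\omega\right]\right) \equiv 0$ (the classical Futaki obstruction, the $c_1$-analogue of Theorem \ref{thm:BFhcscK}). For the second assertion I would invoke the classical fact that $\mathcal{F}_1$ depends only on the K\"ahler class: a cscK $\omega$ gives $\mathcal{F}_1 \left(\cdot, \left[\omega\right]\right) \equiv 0$, so any extremal $\omega'$ with $\left[\omega'\right] = \left[\omega\right]$ has $\mathcal{F}_1 \left(\cdot, \left[\omega'\right]\right) \equiv 0$, whence $\omega'$ is cscK by the first assertion.

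The main obstacle is not the computation, which is formally identical to (\ref{eq:BFcalc})--(\ref{eq:BFcalcfinal3}), but the two inputs it silently consumes: that $\mathcal{F}_1$ is a genuine K\"ahler-class invariant (independent of both the representative metric and the potential $\psi$) --- the classical theorem of Futaki, playing the role that Bando's theorem plays before Theorem \ref{thm:BFhcscK} --- and the K\"ahler identity guaranteeing that the $\omega$-trace of a harmonic $\left(1,1\right)$-form is constant, which is what identifies $\overline{\mathrm{S}}$ as the true average of $\mathrm{S} \left(\omega\right)$ and makes the final $\mathcal{L}^2$ expression valid.
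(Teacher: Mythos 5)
The paper itself gives no proof of Theorem \ref{thm:FeK}: it is quoted as a classical result of Calabi and LeBrun--Simanca and used only for comparison in Section \ref{sec:Analogy}, so there is no in-paper argument to measure you against. Your proposal is correct, and it is exactly the $c_1$-analogue of the paper's own proof of Theorem \ref{thm:BFheK}, which is indeed how the classical statement is established. You correctly isolate the one step that has no counterpart in the top-degree case --- that the $\omega$-trace of the harmonic $(1,1)$-form $\mathtt{H} c_1 \left(\omega\right)$ is constant because contraction with $\omega$ commutes with the Laplacian, which is what identifies $\overline{\mathrm{S}}$ as the genuine average of $\mathrm{S} \left(\omega\right)$ --- and you correctly flag the two classical inputs (class-invariance of $\mathcal{F}_1$ and its vanishing for cscK metrics) that play the roles of Bando's theorem in the paper's argument. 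Two cosmetic remarks: the constant in $\mathcal{F}_1 \left(Y, \left[\omega\right]\right) = - c \int_{\mathrm{M}} \mathtt{f} \left(\mathrm{S} \left(\omega\right) - \overline{\mathrm{S}}\right) \omega^n$ works out to $\frac{1}{2\pi}$ rather than $\frac{1}{2 n \pi}$, since $\mathcal{L}_Y \left(\omega^n\right) = n \hspace{1pt} \mathcal{L}_Y \omega \wedge \omega^{n-1}$ contributes a factor of $n$ that cancels the one in the definition of $\mathrm{S}$ (harmless, as only positivity matters); and the step $d \left(\iota_Y \omega\right) = \sqrt{-1} \partial \bar{\partial} \mathtt{f}$ tacitly assumes $Y$ admits a holomorphy potential, a gap your argument shares with the paper's own computation (\ref{eq:BFcalc}) and which in any case does not affect the evaluation at $Y = \nabla^{1,0} \mathrm{S} \left(\omega\right)$, where the potential is exhibited explicitly.
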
 \par
We hope to work out in the future a formula for the calculation of the top Bando-Futaki invariant on a compact K\"ahler surface with some nice properties which would be analogous to the formula of LeBrun-Simanca \cite{LeBrun:1994:eKcmplxdeform} (used by T{\o}nnesen-Friedman \cite{Tonnesen:1998:eKminruledsurf}) for the calculation of the Futaki invariant on such a surface, as it seems to be an exercise of independent importance. \par
Finally coming to the case when the Riemann surface $\Sigma$ has genus $\mathtt{g} = 0$ i.e. $\Sigma \cong \mathbb{C} \mathbb{P}^1$ and the holomorphic line bundle $\mathrm{L}$ has degree $\mathtt{d} \neq 0$ i.e. $\mathrm{L} \cong \mathcal{O} \left(\mathtt{d}\right)$, then the ruled surface $X \cong \mathbb{P} \left(\mathcal{O} \left(\mathtt{d}\right) \oplus \mathcal{O}\right)$ i.e. $X$ is a Hirzebruch surface. Calabi \cite{Calabi:1982:eK} had constructed an extremal K\"ahler metric which is not cscK in each K\"ahler class on $X$ and then Theorem \ref{thm:FeK} gave the non-existence of any cscK metrics on $X$ (see \cite{Tonnesen:1998:eKminruledsurf}). The analogous problem of constructing higher extremal K\"ahler metrics on Hirzebruch surfaces would be interesting but is completely out of the scope of this paper. In the case when $\mathtt{g} = 1$ i.e. $\Sigma$ is a complex elliptic curve representing an embedding of the complex torus $S^1 \times S^1$ into $\mathbb{C} \mathbb{P}^2$, the questions about the existence of higher extremal K\"ahler and hcscK metrics on $X := \mathbb{P} \left(\mathrm{L} \oplus \mathcal{O}\right)$ seem to be even intriguing and mysterious. \par
It would also be interesting to see in the future if there are some nice properties of the set of all K\"ahler classes of higher extremal K\"ahler metrics (along with the subset of all K\"ahler classes of hcscK metrics) analogous to those of the set of all K\"ahler classes of extremal K\"ahler metrics (along with the subset of all K\"ahler classes of cscK metrics) proven by LeBrun-Simanca \cite{LeBrun:1994:eKcmplxdeform,LeBrun:1994:eKclass}, and if a `Deformation Theory' can be developed for higher extremal K\"ahler and hcscK metrics analogous to the one for extremal K\"ahler and cscK metrics developed by LeBrun-Simanca \cite{LeBrun:1994:eKcmplxdeform,LeBrun:1994:eKclass}. In a different direction, writing down a `higher Calabi functional' and a `higher Mabuchi functional', which can characterize higher extremal K\"ahler and hcscK metrics in ways analogous to the characterization of extremal K\"ahler and cscK metrics by the Calabi functional and the Mabuchi functional, will be an important area of exploration in our future works. Understanding the properties of these functionals on the space of all K\"ahler metrics in a given K\"ahler class on a compact K\"ahler manifold may possibly yield us some results about the uniqueness of hcscK (and even higher extremal K\"ahler) metrics modulo automorphisms similar to those about the uniqueness of cscK (and also extremal K\"ahler) metrics modulo automorphisms recently obtained in Berman-Berndtsson \cite{Berman:2017:UniqueeK}. More generally we would be interested in building a theory of higher extremal K\"ahler metrics in our future works by looking at analogous results from the theory of extremal K\"ahler metrics given in Calabi \cite{Calabi:1982:eK,Calabi:1985:eK2}, Sz\'ekelyhidi \cite{Szekelyhidi:2014:eKintro} and Tian \cite{Tian:2000:CanonMetr} and the relatively recent work of Berman-Berndtsson \cite{Berman:2017:UniqueeK}. \par
Just like Pingali's higher extremal K\"ahler and hcscK metrics \cite{Pingali:2018:heK} are a generalization (or an extension) of extremal K\"ahler and cscK metrics to the level of the top cohomology of a compact K\"ahler manifold, along a similar way but in a different direction one can view Maschler's `central K\"ahler metrics' whose special cases are `constant central curvature K\"ahler metrics' \cite{Maschler:2003:centK}. The definitions of these metrics mimic those of extremal K\"ahler and cscK metrics with scalar curvature being replaced by something called as `central curvature' \cite{Maschler:2003:centK}. Even these metrics involve going up to the top cohomology and their theory was built by Maschler \cite{Maschler:2003:centK} by treating central curvature as analogous to the usual scalar curvature and then trying to find the counterparts of analogous results from the theory of the usual extremal K\"ahler and cscK metrics. Even we hope to do something along similar lines for higher extremal K\"ahler and hcscK metrics.
\section*{Acknowledgements}
The author is greatly indebted to his Ph.D. supervisor Prof. Vamsi Pritham Pingali and his Ph.D. co-supervisor Prof. Ved Datar for their continuous guidance and support throughout his doctoral research work and for their kindness and helpful attitude in the whole process. The author thanks Prof. Vamsi Pritham Pingali for guiding him through some of their own research work, for suggesting him this research problem, for having fruitful discussions with him on multiple occasions and for giving him deep insights into this research topic. The author also thanks Prof. Ved Datar for having important discussions with him, for mentioning some key points and providing some good references needed to tackle this problem and for giving him helpful suggestions at different times. Lastly thanks go to both of them for introducing the author to this novel research area and also for checking the author's research work, giving crucial feedback and suggesting essential improvements in the same. The author highly appreciates the extremely important and useful comments and suggestions made by the reviewer which helped in the improvement of this paper and also thanks them for the same.
\end{document}